\def\bbm{\begin{bmatrix}}
\def\ebm{\end{bmatrix}}
\def\bpm{\begin{pmatrix}}
\def\epm{\end{pmatrix}}
\def\bvm{\begin{vmatrix}}
\def\evm{\end{vmatrix}}
\def\bin{\begin{enumerate}}
\def\ein{\end{enumerate}}
\def\bit{\begin{itemize}}
\def\eit{\end{itemize}}
\def\bid{\begin{description}}
\def\eid{\end{description}}
\newcommand \eps{\varepsilon}
\newcommand \R{\mathbb{R}}
\newcommand \bR{\mathbb{R}}
\newcommand{\Nbhd}{{\mathcal N}}
\newcommand{\x}{\mathbf{x}}
\newcommand{\y}{\mathbf{y}}
\newcommand{\zz}{\mathbf{z}}
\newcommand{\dd}{\rm d}
\newcommand{\vv} {{\mathbf v}}
\newcommand{\xxi}{{\boldsymbol \xi}}
\newcommand{\nnu}{{\boldsymbol\nu}}
\newcommand{\ttau}{{\boldsymbol\tau}}
\newcommand{\rr}{\mathbf{r}}
\newcommand \Sonic{\Gamma_{\rm sonic}}
\newcommand \Shock{\Gamma_{\rm shock}}
\newcommand \Wedge{\Gamma_{\rm wedge}}
\newcommand \Symm{\Gamma_{\rm sym}}
\newcommand \OmegaPM{\Omega^{\rm ext}}
\newcommand{\dist}{ \mbox{\rm dist}}
\newcommand{\divg}{ \mbox{\rm div}}
\newcommand{\curl}{ \mbox{\rm curl}}
\newcommand{\grad}{ \nabla}
\newcommand{\xx}{{\mathbf x}}
\newcommand{\uu}{{\mathbf u}}
\newcommand{\thetaw}{\theta_{\rm w}}
\newcommand{\bt}{\boldsymbol{\tau}}
\newcommand{\bn}{\boldsymbol{\nu}}
\newcommand{\Gw}{\Gamma_{\text{\rm wedge}}}
\newcommand{\Gso}{\Gamma_{\text{\rm sonic}}}
\newcommand{\Gsh}{\Gamma_{\text{\rm shock}}}
\newcommand{\Nex}{{N_1}}
\newcommand{\Nint}{{N_2}}
\newcommand{\Rem}{{\mathcal R}}
\newcommand{\sign}{\mathrm{sign}}
\newcommand{\GammaInt}{\Gamma^{\mbox{\rm \footnotesize int}}}
\newcommand{\GammaExt}{\Gamma^{\mbox{\rm \footnotesize ext}}}
\newcommand{\NuExtI}{\nnu_i^{\mbox{\rm \footnotesize ext}}}
\newcommand{\TauExtI}{\ttau_i^{\mbox{\rm \footnotesize ext}}}
\newcommand{\PZer}{\mathcal{P}_0}
\newtheorem{thm}{{Theorem}}[section]
\newtheorem{lemma}[thm]{{Lemma}}
\newtheorem{theorem}[thm]{{Theorem}}
\newtheorem{definition}[thm]{Definition}
\newtheorem{remark}[thm]{Remark}
\numberwithin{equation}{section}
\begin{document}
\title[Low Regularity of the Riemann Solutions for the Isentropic Euler System]{Low Regularity of Self-Similar Solutions of
\\ Two-Dimensional Riemann Problems with Shocks\\ for the Isentropic Euler System}

\author{Gui-Qiang G. Chen}
\address{Gui-Qiang G. Chen, Mathematical Institute, University of Oxford,
Oxford, OX2 6GG, UK}
\email{gui-qiang.chen@maths.ox.ac.uk}

\author{Mikhail Feldman}
\address{Mikhail Feldman, Department of Mathematics,
         University of Wisconsin-Madison,
         Madison, WI 53706-1388, USA}
\email{feldman@math.wisc.edu}

\author{Wei Xiang}
\address{Wei Xiang, Department of Mathematics, City University of Hong Kong, Kowloon, Hong Kong, P.R. China}
\email{weixiang@cityu.edu.hk}

\date{\today}
\keywords{Low regularity, transonic shock, free boundary,
compressible flow, potential flow, self-similar,
conservation laws, shock reflection-diffraction,
regular shock reflection,
Prandtl reflection, Lighthill diffraction, nonlinear,
fine properties.\\
*Corresponding author: Gui-Qiang G. Chen}

\subjclass[2010]{
Primary: 35M10, 35M12,
35B65, 35L65, 35J70, 76H05, 35L67,
35B45, 35B40, 35B36, 76N10;
Secondary: 35R35, 35L03, 35L04, 35J67,
  76L05, 76J20, 76N20, 76G25}

\begin{abstract}
We are concerned with the low regularity of self-similar solutions of
two-dimensional Riemann problems for the isentropic Euler system.
We establish a general framework for the analysis of the local regularity of
such solutions for a class of two-dimensional Riemann problems for the isentropic Euler system,
which includes the regular shock reflection problem,
the Prandtl reflection problem,
the Lighthill diffraction problem, and the four-shock Riemann problem.
We prove that the velocity is not in $H^1$ in the subsonic domain for the self-similar 
solutions of these problems in general.
This indicates that the self-similar solutions of the Riemann problems
with shocks for the isentropic Euler system are of much more complicated
structure
than those for the Euler system for potential flow;
in particular, the velocity is not necessarily continuous
in the subsonic domain.
The proof is based on a regularization of the isentropic Euler system
to derive the transport equation for the vorticity,
a renormalization argument extended to the case of domains with boundary,
and DiPerna-Lions-type commutator estimates.
\end{abstract}
\maketitle

\medskip
\bigskip
\bigskip
\section{Introduction}\label{sec:introduction}
We are concerned with the regularity of self-similar solutions of the Riemann problems with shocks
for the isentropic Euler system
in a general setting, including several fundamental shock problems such as
the regular shock reflection problem, the Prandtl reflection problem,
the Lighthill diffraction problem, and the four-shock Riemann problem.
In 1860, Riemann first considered a special initial value
problem with two constant states separated at the origin for
the one-dimensional isentropic Euler system in \cite{Riemann} -- now known as the Riemann problem;
this Riemann problem has played a fundamental role in the mathematical theory of hyperbolic systems
of conservation laws,
since its solutions are building blocks and
asymptotic attractors of general global entropy solutions.
Since then, a
systematic theory of one-dimensional hyperbolic systems of conservation laws has been established;
see \cite{Bressan,BressanLiuYang,ChangHsiao,DaBOOK2016,Glimm,Lax,LiuYang} and the reference cited therein.
However, multi-dimensional Riemann problems are much more complicated and completely different
from the one-dimensional case.
Even for the two-dimensional Riemann problem with four constant states given in the four quadrants
for the Euler system,
nineteen genuinely different configurations have been identified;
see \cite{CCY1995,CCY2000,CCHLW2023,LaxLiu,ZhengAMAS2006,ZhangZheng}.
Since then, rigorous global results for the $2$-D four-quadrant Riemann problem for the Euler system
were only done by Li-Zheng in \cite{LiZheng2009,LiZheng2010}.
See also \cite{ChensQu,Serre} for the 2-D Riemann problem for Chaplygin gases.

On the other hand, the regular shock reflection problem is a different type of multi-dimensional Riemann problems --
a lateral Riemann problem that involves the wedge boundaries.
Shock reflection-diffraction phenomena were first presented by Ernst Mach \cite{Mach} in 1878,
and experimental, computational, and asymptotic analysis has shown that various patterns
of shock reflection may occur, including regular and Mach reflection;
see\cite{CFBOOK2018} and the references cited therein.

More recently, the existence of global regular
reflection solutions was established for the Euler system for potential flow
in \cite{CFANN2010,CFBOOK2018}, and further properties of these solutions were
proved in \cite{BCFIVENT2009,CFXConvexity}.
In particular, the solution has a high regularity in the subsonic domain $\Omega$
(where the solution is not a constant state);
see Figs. \ref{figure:shock refelction}--\ref{figure:shock refelction-subs}
in which density $\rho$ and velocity $\vv$ are
in $C^\alpha(\overline\Omega)\cap C^\infty(\Omega)$ for some $\alpha\in (0,1)$.
More precisely, solution $(\rho, \vv)$ is in a weighted H\"{o}lder space which implies that
$(\rho, \vv)$ is in the Sobolev space $W^{1,p}(\Omega)$ for some $p>2$.
In addition, the regularity of the curved reflected-diffracted shock was shown
to be $C^\infty$ in the interior and $C^{2,\alpha}$ up to the endpoints, which is
also expected to hold for the case of the isentropic Euler system.
See also \cite{BCFQAM2013,BCF-2,ELCPAM2008} for the Prandtl reflection-diffraction configuration,
\cite{CDXARMA2014} for the Lighthill diffraction problem,
and \cite{CCHLW2023} for the four-shock Riemann problem.

In contrast, a remarkable phenomenon was first observed by
Serre in \cite{Serre-2007} which showed by a formal calculation
that, in the case of the isentropic Euler system,
the regular shock reflection solutions develop {\it vortical singularity},
specifically that the vorticity cannot be in $L^2(\Omega)$.
This implies that the velocity cannot be in $W^{1,2}(\Omega)$, {\it i.e.}, $H^1(\Omega)$,
which is lower than the regularity
of the velocity for the case of potential flow discussed above.
This in particular allows that the velocity can be discontinuous,
which at least can not be excluded by the Sobolev embeddings.
Since the calculation is formal, it is important to find out
whether the low regularity indeed necessarily holds for the regular shock reflection solutions.

In this paper, under the natural assumptions on the regularity of the self-similar solutions
in the subsonic domain $\Omega$ near the shocks,
we rigorously prove that the velocity is not in $H^1(\Omega)$ indeed.
The argument is based on the vortical singularity calculation in \cite{Serre-2007}.
We first apply this calculation to the regularized solutions carefully
for which the calculation
can be rigorously justified; however, the additional error terms appear due to the regularization.
Then we develop
DiPerna-Lions-type commutator estimates
to control the error terms when the regularization parameter tends to zero.
With this, we then prove the lower regularity property of the self-similar
solutions, by employing renormalization argument.
These self-similar solutions include the above-mentioned shock reflection problems:
the regular shock reflection problem, the Prandtl reflection problem,
the Lighthill diffraction problem,
and the four-shock Riemann problem.

This indicates that the self-similar solutions of the Riemann problems
with shocks for the isentropic Euler system
have much more complicated structure than the corresponding solutions for the Euler
system for potential flow;
in particular, the velocity is not necessarily continuous
in $\Omega$, at least their continuity can not be obtained directly by the Sobolev embeddings.
On the other hand, this argument allows the possibility that $(\rho,\vv)\in
W^{1,p}(\Omega)$ for some or even all $p\in[1,2)$, in which case there are no shocks in $\Omega$.

This paper is organized as follows:
In \S 2, we derive the isentropic Euler system in self-similar coordinates and then present the notion of entropy solutions
in the new coordinates.
In \S 3, we first formulate a general framework for analyzing
the low regularity of entropy solutions of the Riemann problems
in Definition \ref{RiemannProblSolutStruct-1}
and then establish our main theorem of this paper, Theorem \ref{lower-reg-Th},
for the entropy solutions.
Then, in \S 4, we employ the main theorem, Theorem \ref{lower-reg-Th}, for the general framework established in \S 3
to several fundamental transonic shock problems including the regular shock reflection problem,
the Prandtl reflection problem, the Lighthill diffraction problem, and the Riemann problem with four-shock interactions
in \S 4.1--\S 4.4, respectively.
The general framework and the main theorem, as well as ideas and approaches, developed in this paper
should be useful for solving other similar low regularity problems for solutions of nonlinear partial
differential equations.


\section{The Isentropic Euler System and
Entropy Solutions with Shocks}\label{sec:potential flow wquation and free boundary problem}

In this section, we first derive the isentropic Euler system in self-similar coordinates
and then present the notion of entropy solutions, as well as the Rankine-Hugoniot conditions
and the corresponding entropy condition
across a shock (as a free boundary), in self-similar coordinates.

\subsection{The Isentropic Euler System}\label{subsec:potential flow rh condition}
\label{Euler-Eq-subseq}

As in \cite{DaBOOK2016,Serre-2007} (see also \cite{CFBOOK2018}),
the isentropic Euler equations consist of the conservation
laws of mass and momentum:
\begin{equation}\label{isentropisEulersystem}
\begin{cases}
\partial_t\rho+\divg(\rho \uu)=0,\\[1mm]
\partial_t(\rho \uu)+\divg(\rho \uu \otimes \uu)+\nabla p=0,
\end{cases}
\end{equation}
where
$\rho$ is the density, $\uu=(u_1,u_2)$ is the velocity, and
$p$ is the pressure.
The constitutive relation between pressure $p$ and density $\rho$ is through the $\gamma$-law
relation: $p=\frac{\rho^{\gamma}}{\gamma}$ after scaling, and the adiabatic exponent $\gamma>1$ is a given constant.

If an initial-boundary value problem is invariant under
the self-similar scaling:
\begin{equation}\label{urho-self-similar}
(\uu,p,\rho)=(\uu,p,\rho)(\frac{\mathbf{x}}{t}),
\end{equation}
we introduce self-similar variables $\xxi=(\xi_1,\xi_2)=\frac{\mathbf{x}}{t}\in\mathbb{R}^2$
and the pseudo-velocity $\vv=\uu-\xxi$.
Then we obtain the isentropic Euler system for self-similar flow $(\rho, \vv)=(\rho,\vv)(\xxi)$
in the form:
\begin{align}\label{selfSimEuler}
&\divg(\rho\, \vv)+2\rho=0, \\
\label{selfSimEuler-moment}
&\divg(\rho\, \vv\otimes \vv)+ 3\rho \vv+ \grad p=0.
\end{align}

If $(\rho, \vv)\in C^1$, then we can combine the equations above to rewrite \eqref{selfSimEuler-moment} as
\begin{equation}\label{Cons-Moment-nonDiv}
(\vv\cdot \grad) \vv +\vv+\grad h(\rho)=0,
\end{equation}
where $h(\rho)=\frac{\rho^{\gamma-1}-1}{\gamma-1}$ is the enthalpy.
Note that the speed of sound is $c=\rho^\frac{\gamma-1}{2}=\sqrt{(\gamma-1)h(\rho)+1}$.

\smallskip
\subsection{Entropy Solutions and the Rankine-Hugoniot Conditions}
We consider solutions with shocks, which satisfy \eqref{selfSimEuler}--\eqref{selfSimEuler-moment}
in the following weak sense:

\begin{definition}\label{weakSolEulerInOpenDomain}
Let $\Lambda\subset\bR^2$ be a
domain. Then $(\rho,\vv)\in L^{\infty}(\Lambda)$ is an entropy solution of
system \eqref{selfSimEuler}--\eqref{selfSimEuler-moment} if the following conditions hold{\rm:}
\begin{enumerate}
\item[\rm (i)] $(\rho,\vv)$ is a weak solution{\rm :} For any test functions
$\phi\in C^\infty_{\rm c}(\Lambda)$ and $\zeta\in C^\infty_{\rm c}(\Lambda;\,\bR^2)$,
\begin{align}\label{weakSolEuler-Equal-cmass}
 & \int_\Lambda (\rho\vv\cdot\grad \phi -2\rho\phi)\, {\rm d}\xxi=0,  \\
 \label{weakSolEuler-Equal-cmoment}
  & \int_\Lambda \big(\rho\vv\otimes \vv: D\zeta -3\rho\vv\cdot\zeta+p\,\divg \zeta\big)\, {\rm d}\xxi=0,
\end{align}
where we have used the notation $\displaystyle A:B=\sum_{i, j=1}^2a_{ij} b_{ij}$ for $2\times 2$ matrices $A$ and $B$.
\item[\rm (ii)] $(\rho,\vv)$ satisfies the entropy condition{\rm :} For any non-negative test function
$\psi\in C^\infty_{\rm c}(\Lambda)$,
\begin{align}\label{weakSolEuler-Equal-entr}
 & \int_\Lambda \Big(\big(\frac 12\rho|\vv|^2+\rho e(\rho)+p(\rho)\big)\vv\cdot\grad \psi
   -2\big(\rho|\vv|^2+\rho e(\rho)+p(\rho)\big)\psi\Big)\, {\rm d}\xxi\ge 0,
\end{align}
where the internal energy $e(\rho)$ is defined by $p(\rho)=\rho^2e'(\rho)$, i.e., $e(\rho)=\frac{\rho^{\gamma-1}}{\gamma(\gamma-1)}$
for the polytropic case with $p(\rho)=\frac{\rho^\gamma}{\gamma}$.
\end{enumerate}
\end{definition}

Suppose that $S$ is a smooth curve in domain $\Lambda$.
An entropy solution of \eqref{selfSimEuler}--\eqref{selfSimEuler-moment},
which is $C^1$ in the open domains near $S$ and $C^0$ up to $S$ on both sides,
satisfies the following Rankine-Hugoniot (R-H) conditions on $S$:
\begin{equation}\label{RH}
[\rho\vv\cdot\bn]=0,\quad\, [(\rho\vv\cdot\bn)\vv+p\bn]={\bf 0}\qquad\,\,\mbox{on $S$},
\end{equation}
where $\bn$ is a unit normal to $S$, and $[\,\cdot\,]$ denotes the difference of the concerned quantity across $S$.
Denote by $(\rho^\pm, \vv^\pm)$ the values of solution $(\rho, \vv)$ on the $\pm$ sides of $S$.
Assume that $\rho^\pm >0$ on $S$ and some of $(\rho, \vv)$ are discontinuous across $S$.

If $\rho^+\vv^+\cdot\bn=0$ on $S$, then $\rho^-\vv^-\cdot\bn=0$ on $S$ by the first equality
in \eqref{RH}, and $[p]=0$ from the second equality in \eqref{RH}
which implies that $[\rho]=0$. This discontinuity is called a vortex sheet.
In this case, $[\vv\cdot\bt]\ne 0$ on $S$, unless the solution is continuous across $S$,
where $\bt$ is a unit tangent vector to $S$.

If $\rho^+\vv^+\cdot\bn\ne0$ on $S$, then $\rho^-\vv^-\cdot\bn\ne 0$ on $S$ by the first equality in \eqref{RH}.
This discontinuity is called a shock. In this case, $\rho\vv \cdot\bn$ is continuous across $S$ from
the first equality in \eqref{RH},
and $[\vv\cdot\bt]=0$ and $[p]=-\rho\vv\cdot\bn[\vv\cdot \bn]$ from the second equality in \eqref{RH}.
This implies that $[\vv \cdot\bn]\ne 0$, unless the solution is continuous across $S$, so that
$[\rho]\ne 0$ from the first equality in \eqref{RH}.
Also, it follows from $\rho^\pm>0$ on $S$ that $(\vv^+\cdot\bn)(\vv^-\cdot\bn)>0$ on $S$.

Thus, we have shown that the following properties hold, in addition to \eqref{RH},
in the case $\rho^\pm>0$ on $S$:
 \begin{align}
&\mbox{Shock:}\hspace{-8pt} && (\vv^+\cdot\bn)(\vv^-\cdot\bn)>0, \;\; [\vv\cdot\bt]=0,
 \;\; [\vv\cdot\bn]\ne 0, \;\; [\rho]\ne 0; \label{shock-RHp}   \\
&\mbox{Vortex sheet:}\hspace{-8pt} && \vv^+\cdot\bn=\vv^-\cdot\bn=0, \;\; [\vv\cdot\bt]\ne0,
 \;\;  [\rho]= 0. \label{vortexSheet-RHp}
 \end{align}
Furthermore, the entropy condition \eqref{weakSolEuler-Equal-entr} is required across shock $S$
separating the smooth states $(\rho^\pm, \vv^\pm)$ defined in domains $\Lambda^\pm$ with $\rho^\pm>0$.
Then it follows from the direct calculation through \eqref{weakSolEuler-Equal-entr}--\eqref{shock-RHp}
and the choice of orientation of the unit normal $\nnu$ on $S$ to point
from $\Lambda^-$ to $\Lambda^+$ that,  on $S$,
\begin{equation}\label{shock-entropy-RHp}
\mbox{If } \vv^-\cdot\nnu>0, \;\; \mbox{ then }
\;\vv^-\cdot\nnu>\vv^+\cdot\nnu>0,\,\,\rho^-<\rho^+, \;\;
\vv^-\cdot\nnu> c^-,\;\; \vv^+\cdot\nnu< c^+,
\end{equation}
where the last two inequalities are shown {\it e.g.}, in \cite[Theorem 2.2]{Serre-2007}
(the argument is given there for steady solutions;
the proof applies to the self-similar case because the self-similar Rankine-Hugoniot conditions are the same),
since $p(\rho)$ is increasing and convex, implied by $p(\rho)=\frac{\rho^\gamma}{\gamma}$ with $\gamma>1$.

We also give a definition of entropy solutions of system \eqref{selfSimEuler}--\eqref{selfSimEuler-moment} in $\Lambda$
with the slip boundary conditions:
\begin{equation}\label{Euler-slipBC-def}
 \vv\cdot\nnu=0\qquad\mbox{on $\partial\Lambda$},
\end{equation}
where $\nnu$ is the outer normal to $\partial\Lambda$.
The motivation is the following:
Suppose that $(\rho,\vv)\in C^1$ and $\partial\Lambda\in {\rm Lip}$ satisfy \eqref{selfSimEuler}--\eqref{selfSimEuler-moment}
in $\Lambda$ and \eqref{Euler-slipBC-def}.
Then it follows that \eqref{weakSolEuler-Equal-cmass} and
\begin{equation}\label{weakSolEuler-Equal-cmoment-BC}
   \int_\Lambda \big(\rho\vv\otimes \vv: D\zeta -3\rho\vv\cdot\zeta+p\,\divg \zeta\big)\,{\rm d}\xxi
   -\int_{\partial \Lambda} p\,\zeta\cdot\nnu\,{\rm  d}l=0
\end{equation}
are satisfied for any test function
$\zeta\in C^\infty_{\rm c}(\bR^2;\,\bR^2)$.
Note that here the test function is not required to vanish on $\partial\Lambda$.

From now on, we define the $r$-neighborhood of a curve $\Gamma$ as
\begin{equation}\label{r-nbhd}
\mathcal{N}_r(\Gamma):=\{\xxi\, : \,\dist\{\xxi,\Gamma\}<r\}.
\end{equation}
Based on that, we define the notion of entropy solutions of the boundary value problem
\eqref{selfSimEuler}--\eqref{selfSimEuler-moment} and \eqref{Euler-slipBC-def}
as follows:

\begin{definition}\label{weakSolEulerSlipBC}
Let $\Lambda\subset\bR^2$ be a domain with a Lipschitz boundary $\partial\Lambda$.
Let $(\rho,\vv)\in L^{\infty}(\Lambda)$, and let $\rho\in BV_{\rm loc}(\Lambda\cap {\mathcal N}_r(\partial\Lambda))$ 
for some $r>0$.
Then $(\rho,\vv)$ is an entropy solution of system \eqref{selfSimEuler}--\eqref{selfSimEuler-moment}
with slip boundary condition \eqref{Euler-slipBC-def} if  \eqref{weakSolEuler-Equal-cmass},  \eqref{weakSolEuler-Equal-cmoment-BC}, and
\eqref{weakSolEuler-Equal-entr}
are satisfied for any test functions
$\phi\in C^\infty_{\rm c}(\bR^2)$, $\zeta\in C^\infty_{\rm c}(\bR^2;\,\bR^2)$, and $\psi\in C^\infty_{\rm c}(\Lambda)$ with $\psi\ge 0$, respectively.
\end{definition}

\medskip
\section{Low Regularity of Self-Similar Solutions of the Riemann
Problems with Shocks for the Isentropic Euler System}
\label{lowRegSect}

In this section, we first formulate a general framework for analyzing
the low regularity of entropy solutions ({\it i.e.},
self-similar solutions of admissible structure)
of the Riemann problems
in Definition \ref{RiemannProblSolutStruct-1},
motivated by the solutions of the physically fundamental Riemann problems described
in \S \ref{SectionApplications} below.
Then we establish our main theorem, Theorem \ref{lower-reg-Th},
for the entropy solutions $(\rho, \vv)$ in the general framework,
which will be applied to
understanding the low regularity of the solutions of the Riemann
problems including the regular shock reflection problem, the Prandtl reflection problem,
the Lighthill diffraction problem, and the Riemann problem with four shock interactions.
These are achieved by carefully analyzing the vorticity function $\omega:=\partial_1v_2-\partial_2 v_1$
for the pseudo-velocity $\vv=(v_1, v_2)$.

\begin{figure}[!h]
  \centering
\centering
     \includegraphics[width=0.4\textwidth]{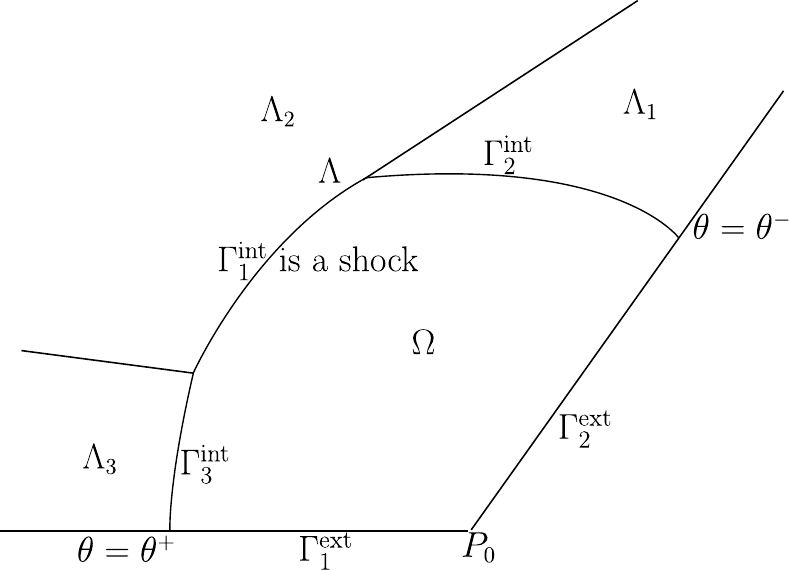}
  \caption{The Riemann Problem in a General Setting}
  \label{figure:general setting}
\end{figure}

More precisely, as shown in Fig. \ref{figure:general setting}, we consider a Riemann problem in a self-similar wedge domain $\Lambda$ in $\bR^2$
with the wedge-vertex being the origin, which also includes the case of the whole space. That is,
in polar coordinates,
\begin{equation}\label{domainLambda}
 \mbox{either $\,\,\Lambda=\bR^2\,\,\,$ or $\,\,\Lambda=\big\{(r, \theta)\;:\; \theta^-<\theta<\theta^+\big\}$}.
\end{equation}
For a Riemann problem in a domain $\Lambda$ whose boundary contains a wedge-boundary,
the initial data are the given constant velocity and density in each sub-sector of $\Lambda$
such that the Rankine-Hugoniot conditions hold on the lines separating them.
Motivated by the expected configurations of self-similar solutions of the Riemann problems in \S \ref{SectionApplications},
we consider solutions in self-similar coordinates of
the following form, in terms of the density and pseudo-velocity $(\rho, \vv)$:

\begin{definition}\label{RiemannProblSolutStruct-1}
We say that the vector function $(\rho, \vv)$ on $\Lambda$ is an entropy solution of the Riemann problem
of admissible structure if
$(\rho,\vv)\in L^{\infty}(\Lambda)$, with
$\rho\in BV_{\rm loc}(\Lambda\cap {\mathcal N}_r(\partial\Lambda))$ for some $r>0$,
is an entropy solution of system \eqref{selfSimEuler}--\eqref{selfSimEuler-moment}
with the slip boundary condition \eqref{Euler-slipBC-def}
in the sense of {\rm Definition \ref{weakSolEulerSlipBC}}, which satisfies
the following properties{\rm :}
\begin{enumerate}[\rm (i)]
  \item \label{RiemannProblSolutStruct-1-i1}
  There exist an open, bounded, connected set
    $\Omega\subset\Lambda$, an integer $M\ge 1$, and open connected sets
    $\Lambda_i$, $i=1,\cdots, M$, pairwise disjoint, such that
    $$
    \overline{\Lambda}\setminus\Omega=\cup_{i=1}^M\overline{\Lambda_i},
    $$
and $(\rho, \vv)$ is a constant state in each $\Lambda_i$,
i.e., $(\rho, \vv)(\xxi)=(\rho_i, \uu_i-\xxi)$ in $\Lambda_i$, where $\rho_i>0$ is a constant and
    $\uu_i$ is a constant vector.

\item \label{RiemannProblSolutStruct-1-i2}
If, for $i\ne j$, sets $\Lambda_i$ and $\Lambda_j$ have common boundary within $\Lambda${\rm ,} {\it i.e.}, 
if $\partial\Lambda_i\cap \partial\Lambda_j\cap\Lambda\ne\emptyset$,
then the corresponding constant states are not equal to each other{\rm :} $(\rho_i, \vv_i)\ne (\rho_j, \vv_j)$.

\item \label{RiemannProblSolutStruct-1-i3}
$\partial\Omega$ is Lipschitz.
Denote
$$
    \GammaExt=\partial\Omega\cap\partial\Lambda, \qquad
    \GammaInt=\partial\Omega\cap\Lambda,
$$
so that $\partial\Omega=\GammaExt\cup\GammaInt$. It is possible that $\GammaExt=\emptyset$
{\rm (}in particular, this is true when $\partial\Lambda=\emptyset${\rm )}.

\item \label{RiemannProblSolutStruct-1-i4}
If $\partial\Lambda\ne\emptyset$, then ${\GammaExt}=\cup_{i=1}^\Nex \overline{\GammaExt_{i}}$,
where $\Nex\ge 1$, each $\GammaExt_{i}$ is
       a relatively open segment of straight line.
Segments  $\{\GammaExt_{i}\}_{i=1}^\Nex$
are disjoint and, if $\GammaExt_{i}$  and $\GammaExt_{j}$ have a common endpoint,
       then the interior angle {\rm (}for $\Omega${\rm )} is within $(0, 2\pi)$ at that point.
       Since $\Lambda$ is a wedge domain
       with the wedge-vertex at the origin, only
       one pair among segments $\{\GammaExt_{i}\}$ may have a common endpoint that is
       $\PZer=(0,0)$.
\item \label{RiemannProblSolutStruct-1-i5}
    $\overline{\GammaInt}=\cup_{i=1}^\Nint \overline{\GammaInt_{i}}$, where  $\Nint\ge 1$, 
    each $\GammaInt_{i}$ is a relatively open segment of curve,
    and segments  $\{\GammaInt_{i}\}_{i=1}^\Nint$ are disjoint.
    Each $\GammaInt_{i}$
        is $C^2$  in its relative interior and $C^1$ up to the endpoints. Moreover,
        $\PZer\notin\overline{\GammaInt}$.
\item \label{RiemannProblSolutStruct-1-i6-0}
$\vv\in C\big(\Nbhd_\sigma(\GammaInt)\cap\overline\Omega\big)$ for some $\sigma>0$.

\item\label{RiemannProblSolutStruct-1-i6}
$\GammaInt_{1}$ is a shock and $\vv\cdot\nnu\le -C^{-1}$ on $\GammaInt_{1}$,  and
$\vv\cdot\nnu\le 0$
on $\GammaInt_{i}$ for $i=2,\cdots, \Nint$, where $\vv$  is taken from the $\Omega$--side
 and $\nnu$ denotes the outer unit normal with respect to $\Omega$.
       \item
\label{RiemannProblSolutStruct-1-i7}
There exists a point $\hat P$ in the relative interior of $\GammaInt_{1}$ such that the curvature of $\GammaInt_{1}$
is non-zero at $\hat P$, and $(\vv\cdot\ttau)(\hat P)\ne 0$, where $\ttau(\hat P)$ is a unit
tangent  vector to
$\GammaInt_{1}$ at $\hat P$.
  \end{enumerate}
\end{definition}

\smallskip
Then we have the following main theorem.

\smallskip
\begin{theorem}[Main Theorem]\label{lower-reg-Th}
Let $(\rho, \vv)$ be a solution of the Riemann problem
of admissible structure in the sense of {\rm Definition \ref{RiemannProblSolutStruct-1}}.
Assume that $(\rho, \vv)$  satisfy
\begin{enumerate}[\rm (i)]
\item
\label{lower-reg-Th-i2}
$(\rho, \vv)\in C^1((\Nbhd_\sigma(\GammaInt)\cap\overline\Omega)\setminus\partial_{\rm p}\GammaInt)\cap C^{0,1}(\Nbhd_\sigma(\GammaInt)\cap\overline\Omega)$
for some $\sigma>0$, where $\partial_{\rm p}\GammaInt$
denotes the set of endpoints of the curve segments $\GammaInt_i$ for $i=1, \cdots, \Nint$.
\item
\label{lower-reg-Th-i3}
$|\vv|\le C_0 $ and $C_0^{-1}\le \rho\le C_0$ in $\Omega$ for some $C_0\ge 1$.
\item
\label{lower-reg-Th-i4}
The flow is subsonic on $\GammaInt_{1}$ from the $\Omega$--side{\rm :}
$|\vv|<c$ on $\GammaInt_{1}$, where $c=\rho^\frac{\gamma-1}{2}$ is the speed of sound.
\end{enumerate}
Then   $\vv\notin H^1(\Omega)$.
\end{theorem}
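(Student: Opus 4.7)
The plan is to argue by contradiction. Assume $\vv\in H^1(\Omega)$, so the vorticity $\omega:=\partial_1 v_2-\partial_2 v_1$ lies in $L^2(\Omega)$. The strategy is to derive the divergence-free identity
\begin{equation}\label{key-div-free-proposal}
 \divg(\rho\,\vv\,Z^2)=0\quad\text{in }\Omega, \qquad Z:=\omega/\rho,
\end{equation}
and integrate it over $\Omega$ to force $Z\equiv 0$ on $\GammaInt_1$, which contradicts a direct Rankine--Hugoniot calculation showing $\omega\neq 0$ near $\hat P$. Formally, taking the curl of \eqref{Cons-Moment-nonDiv} produces the 2-D vorticity equation $\vv\cdot\nabla\omega+\omega\,\divg\vv+\omega=0$; combining with the identity $\vv\cdot\nabla\rho=-\rho(\divg\vv+2)$ extracted from \eqref{selfSimEuler} yields the transport equation $\vv\cdot\nabla Z=Z$. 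Squaring (renormalization) gives $\vv\cdot\nabla Z^2=2Z^2$, and one more use of the continuity identity delivers \eqref{key-div-free-proposal}. The divergence theorem then produces $\int_{\partial\Omega}\rho Z^2(\vv\cdot\nnu)\,dl=0$. Since $\vv\cdot\nnu=0$ on $\GammaExt$ by the slip condition \eqref{Euler-slipBC-def}, $\vv\cdot\nnu\le 0$ on each $\GammaInt_i$ by (\ref{RiemannProblSolutStruct-1-i6}), $\vv\cdot\nnu\le -C^{-1}$ on $\GammaInt_1$, and $\rho\ge C_0^{-1}$ throughout $\Omega$, the identity forces $Z\equiv 0$ a.e.\ on $\GammaInt_1$ in the trace sense from the $\Omega$-side.

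To make \eqref{key-div-free-proposal} rigorous when $\vv$ is merely $H^1(\Omega)$, I would Friedrichs-mollify after first extending $(\rho,\vv)$ across $\partial\Omega$: across $\GammaInt_1$ by the exterior constant-state field $(\rho_1,\uu_1-\xxi)$ (and similarly across the other $\GammaInt_i$ by the adjacent constant states), and across $\GammaExt$ by a reflection compatible with the slip condition. By assumption (\ref{lower-reg-Th-i2}) these extensions are Lipschitz across $\GammaInt$ away from $\partial_{\rm p}\GammaInt$, so the extended field lies in $W^{1,1}_{\rm loc}$ of a two-sided neighborhood of $\GammaInt$. Writing $\vv_\epsilon=\vv*\eta_\epsilon$, $\rho_\epsilon=\rho*\eta_\epsilon$, and $Z_\epsilon=\omega_\epsilon/\rho_\epsilon$, the mollified continuity and momentum equations hold modulo commutator errors $r^{\rm mass}_\epsilon,r^{\rm mom}_\epsilon$ that tend to zero in $L^1_{\rm loc}$ by the DiPerna--Lions commutator lemma applied with $\vv\in W^{1,2}$ and $\rho\in L^\infty$. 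Carrying out the formal manipulations above on the mollified triple yields $\divg(\rho_\epsilon\vv_\epsilon Z_\epsilon^2)=R_\epsilon$ with $R_\epsilon\to 0$ in $L^1_{\rm loc}(\Omega)$; integrating against cutoffs that converge to $\chi_\Omega$ and passing $\epsilon\to 0$ delivers $\int_{\partial\Omega}\rho Z^2(\vv\cdot\nnu)\,dl=0$. The boundary traces are legitimate because $\vv$, hence $Z$, is Lipschitz up to $\GammaInt\setminus\partial_{\rm p}\GammaInt$ by (\ref{lower-reg-Th-i2}), while the $\GammaExt$-contribution is handled through the weak slip formulation \eqref{weakSolEuler-Equal-cmoment-BC}.

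The contradiction is obtained by computing $\omega|_{\GammaInt_1}^{\rm int}$ directly. The exterior affine field $\vv^{\rm ext}=\uu_1-\xxi$ has vanishing 2-D curl. The R--H relations \eqref{RH}--\eqref{shock-RHp} prescribe $\vv^{\rm int}\cdot\ttau=(\uu_1-\xxi)\cdot\ttau$ pointwise along $\GammaInt_1$, while the strict subsonicity (\ref{lower-reg-Th-i4}) together with mass flux and momentum jump uniquely determine $(\rho^{\rm int},\vv^{\rm int}\cdot\nnu)$ as smooth functions of position on $\GammaInt_1$. Differentiating the tangential R--H identity along arclength (using Frenet), decomposing $\omega$ in a Fermi frame along $\GammaInt_1$ (where on the curve $\omega=\partial_s v_\nu-\partial_n v_\tau-\kappa v_\tau$), and extracting the normal derivative $\partial_n v_\tau$ from the interior PDE as a compatibility condition, one obtains an explicit formula for $\omega|_{\GammaInt_1}^{\rm int}$ proportional to $\kappa\cdot(\vv\cdot\ttau)$ with a nonzero prefactor in the strictly subsonic regime; this is essentially Serre's vorticity-generation calculation \cite[Thm.~2.2]{Serre-2007}. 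Assumption (\ref{RiemannProblSolutStruct-1-i7}) then gives $\omega(\hat P)\ne 0$, and continuity (\ref{lower-reg-Th-i2}) extends this to a neighborhood of $\hat P$ in $\GammaInt_1$, contradicting $Z\equiv 0$ there and completing the proof.

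The hard part will be the rigorous derivation of \eqref{key-div-free-proposal} up to $\partial\Omega$. Unlike the standard DiPerna--Lions setting (a linear transport equation on the whole space), the chain of manipulations leading to \eqref{key-div-free-proposal} is nonlinear (quadratic in $Z$) and must survive the regularization with $L^1_{\rm loc}$-vanishing error, and the mollification has to be carried out up to the curved shock and the possibly corner-containing wedge boundary, with the extension of $(\rho,\vv)$ across $\GammaInt$ engineered so that the extended fields remain in $W^{1,1}_{\rm loc}$ of a full neighborhood. The corner set $\partial_{\rm p}\GammaInt$ and the wedge-vertex $\PZer$ require localized cutoffs and a separate control of the resulting near-corner boundary-layer contributions, to confirm that they do not introduce spurious nonzero limiting terms that could invalidate the conclusion $Z\equiv 0$ on $\GammaInt_1$.
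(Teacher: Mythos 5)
Your strategy is essentially the paper's (renormalized vorticity transport $\divg(\rho\vv(\omega/\rho)^2)=0$, integration to kill the trace of $\omega/\rho$ on $\GammaInt_1$, contradiction with a Rankine--Hugoniot/Serre computation of $\omega$ at $\hat P$, which indeed matches Lemma \ref{vorticityOnShockLemma}), but there is a genuine gap at the very first move. You assume only $\vv\in H^1(\Omega)$ and propose to run the mollification ``with $\vv\in W^{1,2}$ and $\rho\in L^\infty$''. That is not enough: every manipulation from the vorticity equation to your divergence identity uses the continuity equation in the pointwise form $\vv\cdot\grad\rho=-\rho(\divg\vv+2)$ and the momentum equation in the non-conservative form \eqref{Cons-Moment-nonDiv} (needed to take the curl and remove $\grad h(\rho)$), and neither holds a.e.\ when $\rho$ is merely bounded; correspondingly, at the mollified level the mass commutator $(\divg(\rho\vv))_\eps-\divg(\rho_\eps\vv_\eps)$ is only controlled in $L^1_{\rm loc}$ by Lemma \ref{commutatorLemma} with $(p,q)=(2,\infty)$, which cannot be multiplied against $Z_\eps$ or $Z_\eps^2$ (only $L^2$-, resp.\ $L^1$-bounded) to produce vanishing errors, and the cancellation between $\divg(\rho\vv\otimes\vv)_\eps$ and $\grad p_\eps$ after dividing by $\rho_\eps$ involves $\grad\rho_\eps$, which is not uniformly controlled. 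The paper closes exactly this hole by a bootstrap you are missing: assuming $\vv\in H^1(\Omega)$, one first uses \eqref{weakSolEuler-Equal-cmass} with test functions $v_j\zeta_j$ together with \eqref{weakSolEuler-Equal-cmoment} to show $\grad p\in L^2(\Omega)$, hence $p,\rho\in H^1(\Omega)\cap L^\infty(\Omega)$, and only then runs the whole regularization/renormalization argument under the hypothesis $(\rho,\vv)\in H^1(\Omega)$, where the mass commutator converges in $L^2$ by a direct product-rule argument. Without this step your scheme does not close.

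Two further concrete problems. First, your extension of $(\rho,\vv)$ across $\GammaInt_1$ by the exterior constant state is \emph{not} in $W^{1,1}_{\rm loc}$ of a two-sided neighborhood: $\GammaInt_1$ is a shock, so $[\rho]\ne0$ and $[\vv\cdot\nnu]\ne0$ by \eqref{shock-RHp}; assumption \eqref{lower-reg-Th-i2} gives Lipschitz regularity only from the $\Omega$--side, so the glued field has a jump and your claim that it lies in $W^{1,1}_{\rm loc}$ fails. The remedy (as in the paper) is not to mollify near $\GammaInt$ at all: for test functions supported in $\Nbhd_r(\GammaInt)$ the renormalized identity holds classically by \eqref{lower-reg-Th-i2}, and reflection (odd normal, even tangential, even $\rho$) is performed only across the straight segments $\GammaExt_i$, with a separate cutoff near the vertex $\PZer$. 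Second, you cannot push $f(s)=s^2$ directly through the regularization: the error terms then carry a factor comparable to $Z_\eps^2$, bounded only in $L^1$, so $L^1_{\rm loc}$-vanishing commutators do not suffice; one must renormalize with truncations $f_M$ having bounded Lipschitz $f_M'$ (so that $g_M$ from \eqref{f-g-vorticityRho-Renorm} is admissible), pass $\eps\to0$ for each fixed $M$, and only afterwards send $M\to\infty$ by dominated convergence, using that $\omega/\rho$ is bounded on $\GammaInt$ thanks to \eqref{lower-reg-Th-i2}--\eqref{lower-reg-Th-i3}. You flag this quadratic issue as ``the hard part'' but give no mechanism; the truncation-plus-two-limit procedure is the missing mechanism.
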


\begin{proof} 
We divide the proof into seven steps. In the first six steps, we prove that 
both $\rho$ and $\vv$
can not be in $H^1(\Omega)$:
\begin{equation}\label{v-rho-l2-b}
(\rho, \vv)\notin H^1(\Omega).
\end{equation}
This will be proved by showing that, under the assumption that 
\begin{equation}\label{v-rho-l2}
(\rho, \vv)\in H^1(\Omega),
\end{equation}
the calculations of vortical singularity can rigorously be justified, which leads to the contradiction.
Then, in Step 7, we prove that $\vv$ is not in $H^1(\Omega)$ indeed: $\vv\notin H^1(\Omega)$.

\medskip
{\bf 1.} 
In Steps 1--6 of this proof, we assume that \eqref{v-rho-l2} holds.
Then vorticity $\omega=\nabla\times \vv$ satisfies $\omega\in L^2(\Omega)$
and
\begin{equation}\label{omega-over-rho-l2}
X:=\frac\omega\rho\in L^2(\Omega),
\end{equation}
where we have used \eqref{v-rho-l2} and assumption \eqref{lower-reg-Th-i3}.
In addition, using \eqref{v-rho-l2} and assumptions (i)--(ii), we see that
\begin{equation}\label{conMass-a-e}
\mbox{\it The left-hand sides of equations \eqref{selfSimEuler} and \eqref{Cons-Moment-nonDiv} are in $L^2(\Omega)$ and the equations hold a.e. in $\Omega$.}
\end{equation}

\smallskip
{\bf 2.} We first formally derive the equations and identities that vorticity $\omega$ satisfies, and then prove them rigorously.

Taking the $\curl$ of \eqref{Cons-Moment-nonDiv}, we formally obtain the equation:
\begin{equation}\label{vorticityEq-1}
 \vv\cdot\grad \omega + (1+\divg\, \vv)\,\omega=0.
\end{equation}
Combining with the first equation of \eqref{selfSimEuler}, we formally have
\begin{equation}\label{vorticityRho-eq0}
\vv\cdot\grad \big(\frac{\omega}{\rho}\big)
=\frac{\omega}{\rho}.
\end{equation}

If $f\in C^1(\bR)$, multiplying the last equation by $f'(\frac{\omega}{\rho})$ and then combining
with the first equation of \eqref{selfSimEuler}, we formally derive
\begin{equation}\label{vorticityRho-Renorm}
\divg\big(\rho f(\frac\omega\rho) \vv\big)
= \rho g(\frac\omega\rho),
\end{equation}
where
\begin{equation}\label{f-g-vorticityRho-Renorm}
 g(s):=sf'(s)-2f(s).
\end{equation}

\smallskip
Now we are going to show that equation \eqref{vorticityRho-Renorm}
holds under the present assumptions in the weak sense defined as follows:
Using notation \eqref{omega-over-rho-l2}, equation \eqref{vorticityRho-Renorm} can be written as
\begin{equation}\label{vorticityRho-Renorm-X}
\divg\left(\rho f(X) \vv\right)= \rho g(X)
\end{equation}
for any $f\in C^1(\bR)$ with $g$ defined by \eqref{f-g-vorticityRho-Renorm}.
From the weak form \eqref{weakSolEuler-Equal-cmass} of the conservation law of mass,
which holds for all $\phi$ specified in Definition \ref{weakSolEulerSlipBC},
and
the regularity assumption \eqref{v-rho-l2}, we obtain that
$\rho\vv\cdot\nnu=0$ holds ${\mathcal H}^1$--{\it a.e.} on  $\GammaExt$
in the sense of traces.
Then, using assumption \eqref{lower-reg-Th-i3},
we recover condition \eqref{Euler-slipBC-def} on
$\partial \Lambda\cap\partial\Omega$:
\begin{equation}\label{bcOnWedgeSym}
 \vv\cdot\nnu=0\qquad\,\,{\mathcal H}^1\mbox{--a.e. on }\;\GammaExt.
\end{equation}

We next show that, under the present assumptions, \eqref{vorticityRho-Renorm-X} holds weakly in $\Omega$
in the sense that
\begin{equation}\label{vorticityEq-weak}
 \int_\Omega \big( \rho f(X)\vv\cdot\grad\zeta +\rho g(X)\zeta\big) {\rm d}\xxi -
 \int_{\GammaInt}\rho f(X)  \vv\cdot\nnu\, \zeta\,{\rm d}l=0\qquad\mbox{ for all }\; \zeta\in C^\infty(\bR^2)
\end{equation}
for any $f\in C^1(\bR)$ with $\|f'\|_{C^{0,1}(\bR)}<\infty$.

Notice that
the boundary integral in \eqref{vorticityEq-weak} is taken along a part of $\partial\Omega$.
Formally, the boundary integral along the remaining part $\GammaExt$ of $\partial\Omega$
is expected to vanish by \eqref{bcOnWedgeSym}. Equation \eqref{vorticityEq-weak} shows that
$$
\rho f(X) \vv\cdot\nnu=0
\qquad \mbox{on $\GammaExt$ in the weak sense.}
$$
Furthermore, the integrand in the boundary integral in \eqref{vorticityEq-weak} is well-defined by
       assumption \eqref{lower-reg-Th-i2}  and {\rm Definition \ref{RiemannProblSolutStruct-1}\eqref{RiemannProblSolutStruct-1-i5}}.

\medskip
{\bf 3.} To prove \eqref{vorticityEq-weak}, we introduce smooth approximations of $(\rho, \vv)$.
Since we work in the bounded domain $\Omega$ and  need the boundary condition $\vv\cdot \nnu=0$ on $\GammaExt$ and other properties
to hold for the approximating functions for the argument below, we first construct a specific sequence
of smooth approximations of $(\rho, \vv)$.

We first extend $\vv$ by the reflection across the straight boundary segments $\GammaExt_i$, $i=1,\cdots, \Nex$,
so that the normal component is extended by the odd reflection and
condition \eqref{bcOnWedgeSym} is used to conclude that the extended function remains in $H^1$,
and the tangential component is extended by the even reflection.

Denote by $\NuExtI$ the unit inner normal vector to $\GammaExt_i$ with respect to $\Lambda$, $i=1,\cdots, \Nex$, respectively,
and by $\TauExtI$ the unit tangential vectors to $\GammaExt_i$.

\newcommand{\hatV}{W}
For each $r>0$ and $i=1, \cdots, \Nex$, we first define an extension of $\vv$ from $\Nbhd_{r}(\GammaExt_i)\cap\Omega$ across $\GammaExt_i$ by reflection.
However,
there is the following issue: Denote
$$
\hatV_i^{r}:=\big\{\xxi-s\NuExtI\;:\; \xxi\in\GammaExt_i, \;s\in(-r, 0),\; \xxi+s\NuExtI\in\Omega\big\},
$$
the image of $\Nbhd_{r}(\GammaExt_i)\cap\Omega$ under the reflection across $\GammaExt_i$.
We note that, if $\PZer=(0,0)$ is a common endpoint of $\GammaExt_{i}$ and $\GammaExt_{j}$,
and the interior angle $\theta_{\PZer}$ of $\Lambda$ at $\PZer$ is larger than $\pi$, {\it i.e.},
$\theta_{\PZer}\in (\pi, 2\pi)$, then $\hatV_i^{r}\cap \hatV_j^{r}\ne\emptyset$  for any $r>0$.
Moreover, if $\theta_{\PZer}\in (\frac 32\pi, 2\pi)$, then $\hatV_i^{r}\cap \Omega\ne\emptyset$ for any $r>0$.
Of course, in general, the extension of $\vv$ by reflection across $\GammaExt_{i}$ into $\hatV_i^{r}$
does not match with $\vv$ in $\hatV_i^{r}\cap \Omega$ or with
extension of $\vv$ by reflection across $\GammaExt_{j}$ in $\hatV_i^{r}\cap \hatV_j^{r}$,
so the region needs to be restricted.

In the case above, for each $r>0$, we have
\begin{equation}\label{non-intersect}
  (\hatV_i^{r}\cap \Omega)\setminus B_{Lr}(\PZer)=\emptyset, \quad
  (\hatV_i^{r}\cap \hatV_j^{r})\setminus B_{Lr}(\PZer)=\emptyset \qquad\mbox{for some }
  L=L(\theta_{\PZer})\ge 0.
\end{equation}
In fact, this is true with $L=\mbox{cosec}\, {\theta_{\PZer}}$
if $\theta_{\PZer}\in (\pi, 2\pi)$ and $L=0$ if $\theta_{\PZer}\in (0, \pi]$.
Also, there exists $r_1>0$ such that, for $i=1, \cdots, \Nex$,
\begin{equation}\label{def-R1}
\GammaExt_{i}\cap B_{(L+1)r_1}(\PZer) = \emptyset \qquad\mbox{if $\PZer$ is not an endpoint of  }
\GammaExt_{i}.
\end{equation}
Now, for each $r\in (0, r_1]$, define
$$
V_i^{r}:= \hatV_i^{r} \setminus  \overline{B_{Lr}(\PZer)},
$$
which is an open connected set.
We extend $\vv$ to $V_i^r$ by
\begin{equation}\label{defExtReflSymm}
\vv(\xxi-s\NuExtI):=-(\vv\cdot\NuExtI) (\xxi+s\NuExtI) \NuExtI
   +(\vv\cdot\TauExtI) (\xxi+s\NuExtI)\TauExtI
\end{equation}
for all $\xxi\in \GammaExt_i$ and $s\in (0, r)$
such that $\xxi+s\NuExtI\in\Omega\setminus  \overline{B_{Lr}(\PZer)}$.
From the definitions of $\hatV_i^r$ and $V_i^{r}$, since segment $\GammaExt_i$ lies on the line passing through $\PZer$,
the expression above defines $\vv$ on the whole region $V_i^r$.
Also, it follows that, for all $\xxi\in \GammaExt_i$ and $s\in (0, r)$ such that $\xxi+s\NuExtI\in\Omega\setminus  \overline{B_{Lr}(\PZer)}$,
\begin{equation}\label{defExtReflSymm-11}
\begin{split}
&(\vv\cdot\NuExtI) (\xxi-s\NuExtI):=-(\vv\cdot\NuExtI) (\xxi+s\NuExtI),\\[1mm]
&(\vv\cdot\TauExtI)(\xxi-s\NuExtI):=(\vv\cdot\TauExtI) (\xxi+s\NuExtI),
\end{split}
\end{equation}
that is, $\vv\cdot\NuExtI$ is extended by the odd reflection, and $\vv\cdot\TauExtI$
 is extended by the even reflection across $\GammaExt_i$.

Similarly, for each $i=1, \cdots, \Nex$ and $r\in (0, r_1]$, we extend $\rho$ to $V_i^r$ by the even reflection
across $\GammaExt_i$:  For all $\xxi\in \GammaExt_i$ and $s\in (0, r)$ such that $\xxi+s\NuExtI\in\Omega\setminus  \overline{B_{Lr}(\PZer)}$,
\begin{equation}\label{defExtRho}
\rho (\xxi-s\nnu):=\rho (\xxi+s\nnu),
\end{equation}
where we recall that $\GammaExt_i$ is a relatively open line segment.

Using \eqref{v-rho-l2}
and \eqref{lower-reg-Th-i3},
and noting that $\vv\cdot\nnu=0$ to make the odd extension of $\vv\cdot\nnu$ across $\GammaExt_i$,
we find that the extended $(\rho,\vv)$ satisfies $(\rho, \vv)\in H^1((\Omega\cup \overline{V_i^{r_1}})^0)\cap L^\infty(\Omega\cup \overline{V_i^{r_1}})$
and $\rho\ge C_0^{-1}$ on $\Omega\cup \overline{V_i^{r_1}}$.

Combining the extensions for $i=1,\cdots, \Nex$,  and using \eqref{non-intersect}--\eqref{def-R1},
we obtain that, for each $r\in(0, r_1]$, $(\rho, \vv)$ is extended to the domain:
  \begin{equation}\label{defExtDomain}
 \OmegaPM_{r}=\big(\Omega\cup
 (\cup_{i=1}^\Nex\overline{V_i^{r}} )\big)^0,
\end{equation}
and the extension satisfies $(\rho,\vv)\in H^1(\OmegaPM_{r})\cap L^\infty(\OmegaPM_{r})$  and
$\rho\ge C_0^{-1}$ on $\OmegaPM_{r}$.

Moreover, by \eqref{conMass-a-e} and the explicit structure
\eqref{defExtReflSymm}--\eqref{defExtRho}
of the extension, it follows that property \eqref{conMass-a-e}
holds for the extended $(\rho, \vv)$ in $\OmegaPM_{r}$ for any $r\in(0, r_1]$:
\begin{equation}\label{conMass-a-e-Ext}
\begin{aligned}
&\mbox{\it The left-hand sides of Eqs. \eqref{selfSimEuler} and \eqref{Cons-Moment-nonDiv} are }\\
&\mbox{\it in $L^2(\OmegaPM_{r})$, and  the equations hold a.e. in $\OmegaPM_{r}$.}
\end{aligned}
\end{equation}
Indeed, if $\xxi\in \GammaExt$ and $s\in (0, r)$, and if $(\rho, \vv)$ is differentiable at $\xxi+s\nnu$,
then clearly $(\rho, \vv)$ is differentiable at $\xxi-s\nnu$, and \eqref{selfSimEuler} and \eqref{Cons-Moment-nonDiv} hold
at $\xxi-s\nnu$, which can be seen by the explicit calculation or using the standard symmetries of
the isentropic Euler system \eqref{selfSimEuler}--\eqref{selfSimEuler-moment}.

Thus, we have shown the following results in this step:
\begin{lemma}\label{approxVelocityLemma}
Under assumptions \eqref{lower-reg-Th-i2}--\eqref{lower-reg-Th-i3}
of {\rm Theorem \ref{lower-reg-Th}} and \eqref{v-rho-l2},
there exists $r_1>0$ depending only on $\Omega$ such that, for any $r\in(0, r_1]$,
there is an extension of $(\rho,\vv)$ into $\OmegaPM_{r}$, still denoted as $(\rho,\vv)$, so that
\begin{enumerate}[\rm (a)]
\item \label{approxVelocityLemma-i1}
$(\rho, \vv)\in H^1(\OmegaPM_{r})\cap L^\infty(\OmegaPM_{r})$ with $\rho\ge C_0^{-1}$ in $\OmegaPM_{r};$

\item \label{approxVelocityLemma-i2}
For any $\xxi\in \GammaExt$ and $s\in (0, r)$ such that $\xxi+s\nnu\in\Omega\setminus \overline{B_{Lr}(\PZer)}$ for $L$
from \eqref{non-intersect}, the following odd/even reflection properties hold{\rm :}
\begin{equation}\label{defExtReflAll}
\begin{cases}
(\vv\cdot\nnu) (\xxi-s\nnu)=-(\vv\cdot\nnu) (\xxi+s\nnu),\\[1mm]
   (\vv\cdot\ttau)(\xxi-s\nnu)=(\vv\cdot\ttau) (\xxi+s\nnu), \\[1mm]
\rho (\xxi-s\nnu)=\rho (\xxi+s\nnu).
\end{cases}
\end{equation}

\item \label{approxVelocityLemma-i3} Property \eqref{conMass-a-e-Ext}
holds.
\end{enumerate}
\end{lemma}

{\bf 4.} We now show that equation \eqref{vorticityRho-Renorm-X} is satisfied in the weak sense:

\begin{lemma}\label{vorticityEqnProp}
Under the assumptions  of {\rm Theorem \ref{lower-reg-Th}} and  \eqref{v-rho-l2},
equation \eqref{vorticityRho-Renorm-X} holds weakly in the sense
\eqref{vorticityEq-weak} for each $f\in C^1(\bR)$  with $\|f'\|_{C^{0,1}(\bR)}<\infty$
and $g$ defined by \eqref{f-g-vorticityRho-Renorm}.
\end{lemma}

This can be proved as follows:
We reduce $r_1$ to obtain, in addition to \eqref{def-R1}, that, for  $j=1,\cdots, \Nint$,
\begin{equation}\label{def-R1-1}
\GammaInt_{j}\cap B_{(L+1)r_1}(\PZer) = \emptyset\qquad \mbox{and} \qquad r_1<\sigma,
\end{equation}
where we have used that  $\PZer\notin\overline{\GammaInt}$ by
Definition \ref{RiemannProblSolutStruct-1}\eqref{RiemannProblSolutStruct-1-i5},
$\sigma$ is from assumption \eqref{lower-reg-Th-i2}
of Theorem \ref{lower-reg-Th},
and $L$ is from \eqref{non-intersect}.
Fix $f\in C^1(\bR)$  with $\|f'\|_{C^{0,1}(\bR)}\le C$,
and let $g$ be defined by \eqref{f-g-vorticityRho-Renorm}.

\medskip
{\bf 4.1.} We first prove \eqref{vorticityEq-weak} in the case
when the smooth function $\zeta$ satisfies that there exists $r\in (0, r_1)$ such that
\begin{equation}\label{case-1-vortEq}
 \zeta\equiv 0\qquad \mbox{in $\Nbhd_r(\GammaInt)\cup B_{2Lr}(\PZer)$}.
\end{equation}
Thus, we need to consider $(\rho, \vv)$ only in the region:
\begin{equation}\label{Omega-r-def}
\Omega^r:=\Omega\setminus \big(\overline{\Nbhd_r(\GammaInt)}\cup \overline{B_{2Lr}(\PZer)}\big).
\end{equation}

Note that $\Omega^r\Subset \OmegaPM_r$. Let
$$
\delta:=\dist(\partial \OmegaPM_r,\; \Omega^r).
$$
Then $\delta\in (0, r]$.
We now mollify $(\vv, \omega)$ and $\rho$ in $\Omega^r$, by using the extension of $(\rho,\vv)$ into
$\OmegaPM_r$ constructed in Lemma \ref{approxVelocityLemma} and the corresponding extension
of $\omega=\partial_1v_2-\partial_2 v_1$.
In order to achieve that the mollified $\vv$ satisfies the boundary condition \eqref{bcOnWedgeSym},
we  use $\eta\in C^\infty_{\rm c}(\bR^2)$ with  $\int_{\bR^2}\eta(\xxi)\,{\rm d}\xxi=1$ of form $\eta(\xxi)=g(|\xxi|)$
for some $g\in C^\infty_{\rm c}(\bR)$, for example, the standard mollifier.
Then we define $\displaystyle \eta_\eps(\xxi)=\frac 1{\eps^2}\eta(\frac \xxi\eps)$ for $\eps>0$, and
denote $F_\eps:=F*\eta_\eps$ for various functions $F$, specifically
$$
\vv_\eps=\vv*\eta_\eps,\quad \omega_\eps=\omega*\eta_\eps,\quad \rho_\eps=\rho*\eta_\eps
\quad (h(\rho))_\eps=h(\rho)*\eta_\eps\qquad\,\,\mbox{for $\eps\in (0, \frac \delta 2)$}.
$$
Now, since  $(\rho, \vv)\in H^1(\OmegaPM_{r})\cap L^\infty(\OmegaPM_{r}) $
with $\rho\ge C^{-1}$ by  Lemma \ref{approxVelocityLemma}(\ref{approxVelocityLemma-i1})--(\ref{approxVelocityLemma-i2}),
and $\Omega^r\Subset\OmegaPM_r$, we obtain that, for each $\eps\in(0,\frac{\delta}{2})$,
\begin{equation}\label{conw-v-rho-omega}
\begin{aligned}
&\rho_\eps, \, \vv_\eps, \,(h(\rho))_\eps\in H^1(\Omega^r)\cap L^\infty(\Omega^r),
  \quad \|(\rho_\eps,\,\vv_\eps)\|_{L^\infty(\Omega^r)}\le \|(\rho,\, \vv)\|_{L^\infty(\OmegaPM_{r})},\\[1mm]
&(\rho_\eps,\,\vv_\eps) \to (\rho,\,\vv) \,\,\,\,\mbox{in $H^1(\Omega^r)$}, \quad\,
   \omega_\eps \to \omega \,\,\,\,\mbox{in $L^2(\Omega^r)\qquad\,\,$  as $\eps\to 0$}.
\end{aligned}
\end{equation}
Moreover, from \eqref{defExtReflSymm}--\eqref{defExtReflSymm-11} and $\eta(\xxi)=g(|\xxi|)$, we have
\begin{equation}\label{bcOnWedgeSymm-minus-Br}
 \vv_\eps\cdot\nnu=0\qquad\mbox{on $\cup_{i=1}^\Nex\GammaExt_i\cap\partial \Omega^r\quad$ for all $\eps\in (0, \delta)$}.
\end{equation}
Now, from
\eqref{conMass-a-e-Ext}, we see that, in $\Omega^r$,
\begin{align}\label{EulerSmoothed-cons-mass}
&\divg(\rho_\eps \vv_\eps)+2\rho_\eps+r^{(1)}_\eps=0, \\
\label{EulerSmoothed}
&(\vv_\eps\cdot \grad) \vv_\eps +\vv_\eps+\grad (h(\rho))_\eps+\rr^{(2)}_\eps=0,
\end{align}
where
\begin{align*}
 r^{(1)}_\eps  = (\divg\,(\rho \vv))_\eps-\divg\,(\rho_\eps \vv_\eps), \qquad
  \rr^{(2)}_\eps  =\big((\vv\cdot \grad) \vv\big)_\eps-(\vv_\eps\cdot \grad) \vv_\eps.
\end{align*}
The functions on the left-hand side of \eqref{EulerSmoothed-cons-mass}--\eqref{EulerSmoothed} are smooth.
Taking the $\curl$ of
\eqref{EulerSmoothed}, we obtain
\begin{equation}\label{vorticity-eqn-eps-with-div}
 \vv_\eps\cdot\grad \omega_\eps + (1+\divg \,\vv_\eps)\omega_\eps +\curl\,\rr^{(2)}_\eps =0
 \qquad\,\,\mbox{in $\Omega^r$}.
\end{equation}
Denote
$$
X^{(\eps)}:=\frac{\omega_\eps}{\rho_\eps}.
$$
Using \eqref{conw-v-rho-omega} and the lower bound of $\rho$ in $\OmegaPM_{r}$ by
 Lemma \ref{approxVelocityLemma}\eqref{approxVelocityLemma-i1},  we obtain that, for $\eps\in(0,\frac{\delta}{2})$,
\begin{equation}\label{conw-X-eps}
  X^{(\eps)}\in L^2(\Omega^r), \qquad\;
  X^{(\eps)}\to X
   \,\,\,\,\mbox{in $L^2(\Omega^r)\,\,\,$ as $\eps\to 0$}.
\end{equation}
We apply the definition of $X^{(\eps)}$ in the first equality, along with
equations \eqref{EulerSmoothed-cons-mass} and
\eqref{vorticity-eqn-eps-with-div} in the second equality, to compute:
\begin{align*}
  \rho_\eps\vv_\eps\cdot \grad X^{(\eps)} &=
 \vv_\eps\cdot\grad \omega_\eps-\nabla\rho_\eps\cdot\vv_\eps X^{(\eps)}  \\
  & =\rho_\eps X^{(\eps)}
  +r^{(1)}_\eps-\curl\,\rr^{(2)}_\eps.
\end{align*}
From this and \eqref{EulerSmoothed-cons-mass}, we have
\begin{align}
\divg\big(\rho_\eps f(X^{(\eps)})\vv_\eps\big)
& =f'(X^{(\eps)})\rho_\eps  \vv_\eps
 \cdot \grad X^{(\eps)} +f(X^{(\eps)}) \divg(\rho_\eps\vv_\eps) \nonumber\\
&=\rho_\eps \big(X^{(\eps)}f'(X^{(\eps)})-2f(X^{(\eps)})\big)+\Rem_\eps,\label{calc-X-eqn-eps}
\end{align}
where
\begin{equation}\label{remainder-full}
  \Rem_\eps=f'(X^{(\eps)})\big(r^{(1)}_\eps-\curl\,\rr^{(2)}_\eps\big)
  - f(X^{(\eps)})r^{(1)}_\eps.
\end{equation}
Recalling definition \eqref{f-g-vorticityRho-Renorm} of $g(\cdot)$, we rewrite
\eqref{calc-X-eqn-eps} as
\begin{align*}
  \divg\big(\rho_\eps f(X^{(\eps)})\vv_\eps\big)& =\rho_\eps g(X^{(\eps)})
  +\Rem_\eps\qquad\mbox{in $\Omega^r$}.
\end{align*}

Let $\zeta\in C^\infty(\bR^2)$ satisfy \eqref{case-1-vortEq}.
Multiply the last equation by $\zeta$, integrate over $\Omega$, and integrate by parts with the use of 
\eqref{case-1-vortEq} and
\eqref{bcOnWedgeSymm-minus-Br} to obtain
\begin{equation}\label{vorticityEq-weak-remainder}
 \int_{\Omega^r} \big(\rho_\eps f(X^{(\eps)}) \vv_\eps\cdot\grad\zeta
 +\big(\rho_\eps g(X^{(\eps)})
  +\Rem_\eps\big)\zeta\big)\, {\rm d}\xi
 =0,
\end{equation}
where we have used \eqref{case-1-vortEq}--\eqref{Omega-r-def} to restrict the domain to $\Omega^r$.

\smallskip
{\bf 4.2.} To send $\eps\to 0$ in \eqref{vorticityEq-weak-remainder}, we note the following facts:
Since $f\in  C^1(\bR)$ with $\|f'\|_{C^{0,1}(\bR)}<\infty$, then,  by \eqref{f-g-vorticityRho-Renorm},
$g\in C(\bR)$ with $\mbox{Lip}(g)<\infty$ on $\bR$.
It follows from \eqref{omega-over-rho-l2} and \eqref{conw-X-eps} that, for all $\eps\in (0, \frac{\delta}{2})$,
\begin{equation}\label{est-fX-gX-eps}
\|f(X^{(\eps)})\|_{L^2(\Omega^r)}+\|f'(X^{(\eps)})\|_{L^\infty(\Omega^r)}
+\|g(X^{(\eps)})\|_{L^2(\Omega^r)}\le C,
\end{equation}
and
\begin{equation}\label{conv-fX-gX-eps}
  (f(X^{(\eps)}),\, g(X^{(\eps)}))  \to\, (f(X), g(X))
   \qquad\mbox{in $L^2(\Omega^r)\,\,$ as $\eps\to 0$}.
\end{equation}

Next, we show that
\begin{equation}\label{remainder-limit}
\Rem_\eps  \to 0\qquad\mbox{in $L^1(\Omega^r)\;\;$ as $\eps\to 0$}.
\end{equation}
From  \eqref{remainder-full} and \eqref{est-fX-gX-eps}
and the fact that $\Omega^r$ is a bounded domain, in order to prove \eqref{remainder-limit}, it suffices to show that
\begin{align}\label{remainder-limit-1}
&r^{(1)}_\eps \to 0 \qquad\qquad \mbox{in $L^2(\Omega^r)\;\;$ as $\eps\to 0$},\\
\label{remainder-limit-2}
&\curl\,\rr^{(2)}_\eps \to 0\qquad\, \mbox{in $L^1(\Omega^r)\;\;$ as $\eps\to 0$}.
\end{align}

To show \eqref{remainder-limit-1}, we first note that $\Omega^r\Subset\OmegaPM_r$
and $\rho\vv\in H^1(\OmegaPM_{r})\cap L^\infty(\OmegaPM_{r})$ by
Lemma \ref{approxVelocityLemma}\eqref{approxVelocityLemma-i1},
so that
$$
(\divg\,(\rho \vv))_\eps-\divg\,(\rho \vv) \to 0\qquad\mbox{in $L^2(\Omega^r)\;$ as $\eps\to 0$}.
$$
Thus, it remains to show that
\begin{equation}\label{remainder-limit-1-1}
\divg\,(\rho_\eps \vv_\eps)-\divg\,(\rho \vv) \to 0\qquad\mbox{in $L^2(\Omega^r)\;$ as $\eps\to 0$}.
\end{equation}
We first show that
$$
\divg\,(\rho_\eps \vv_\eps)-\divg\,(\rho \vv_\eps)\to 0\qquad\mbox{in $L^2(\Omega^r)\;$ as $\eps\to 0$}.
$$
Indeed,
$$
\divg\,(\rho_\eps \vv_\eps)-\divg\,(\rho \vv_\eps)
=\grad(\rho_\eps-\rho)\cdot\vv_\eps +(\rho_\eps-\rho)(\divg\, \vv_\eps-\divg\, \vv)+(\rho_\eps-\rho)\divg\, \vv.
$$
In the argument below, we use \eqref{conw-v-rho-omega}.
We see that $\grad(\rho_\eps-\rho)\cdot\vv_\eps\to 0$ in $L^2(\Omega^r)$ as $\eps\to 0$, because $\grad\rho_\eps\to\grad\rho$ in
$L^2(\Omega^r)$ and $\vv_\eps$ is uniformly bounded in $L^\infty(\Omega^r)$.
Also, $(\rho_\eps-\rho)(\divg\, \vv_\eps-\divg\, \vv)\to 0$ in $L^2(\Omega^r)$,
because $\rho-\rho_\eps$ is uniformly bounded in $L^\infty(\Omega^r)$ and $\divg\, \vv_\eps\to \divg\, \vv$ in $L^2(\Omega^r)$.
Finally,  $(\rho_\eps-\rho)\divg\, \vv\to 0$ in $L^2(\Omega^r)$, 
because $\rho-\rho_\eps$ is uniformly bounded in $L^\infty(\Omega^r)$
and converges to zero {\it a.e.}
in $\Omega^r$, and $\divg\,\vv\in L^2(\Omega^r)$, so that
$$
\int_{\Omega_r}(\rho_\eps-\rho)^2(\divg\, \vv)^2\,{\rm d}\xi \to 0 \qquad \mbox{as $\eps\to 0$},
$$
by the dominated convergence theorem.
The convergence:
$$
\divg\,(\rho \vv_\eps)-\divg\,(\rho \vv)\to 0\qquad\mbox{in $L^2(\Omega^r)\;\;$ as $\eps\to 0$}
$$
can be shown similarly. This completes the proof of \eqref{remainder-limit-1-1}, which leads to \eqref{remainder-limit-1}.

\smallskip
Now we show \eqref{remainder-limit-2}.
Note that
$$
\curl\,\rr^{(2)}_\eps=\partial_{\xi_1}\big((\vv\cdot\grad v_2)_\eps -\vv_\eps\cdot\grad (v_2)_\eps\big)
-\partial_{\xi_2}\big((\vv\cdot\grad v_1)_\eps -\vv_\eps\cdot\grad (v_1)_\eps\big).
$$
Then \eqref{remainder-limit-2} follows from Lemma \ref{commutatorLemma} in Appendix \ref{append1}, 
applied in $\Omega^{\rm ext}_r$,
with
$p=q=2$,
$b=v_j$, and $u=\partial_jv_k$ for the corresponding $j,k=1,2$, and $i=3-k$.

Combining the results above, \eqref{remainder-limit} is now proved.
Then, sending $\eps\to 0$ in \eqref{vorticityEq-weak-remainder} and using
\eqref{conv-fX-gX-eps}--\eqref{remainder-limit}, we obtain
\begin{equation*}
\int_\Omega \big(\rho f(X) \vv\cdot\grad\zeta +\rho g(X)\zeta\big)\,{\rm d}\xxi
 =0,
\end{equation*}
which is equivalent to \eqref{vorticityEq-weak}, by using \eqref{case-1-vortEq}.

\medskip
{\bf 4.3.} Next, we prove \eqref{vorticityEq-weak} in the case when the smooth function $\zeta$ satisfies:
\begin{equation}\label{case-2-vortEq}
 \zeta\equiv 0\qquad \mbox{in $\Omega\setminus \Nbhd_r(\GammaInt)\,\,$ for some $r\in (0, r_1)$.}
\end{equation}
Then, by \eqref{def-R1-1} and assumption \eqref{lower-reg-Th-i2} of Theorem \ref{lower-reg-Th},
equation \eqref{vorticityRho-Renorm-X} and the boundary conditions \eqref{bcOnWedgeSym} hold classically
on $\mbox{supp}(\zeta)\cap\Omega$ (for the equation, the argument is given from \eqref{vorticityEq-1} to \eqref{vorticityRho-Renorm}).
Then, multiplying
\eqref{vorticityRho-Renorm-X} by $\zeta$, integrating
over $\Omega$, and integrating by parts in the first term with the use of \eqref{bcOnWedgeSym},
we obtain \eqref{vorticityEq-weak} for $\zeta$ satisfying \eqref{case-2-vortEq}.

\medskip
{\bf 4.4.} Combining the two previous cases, we obtain \eqref{vorticityEq-weak} for all smooth $\zeta$ satisfying
\begin{equation}\label{case-3-vortEq}
 \zeta\equiv 0\qquad \mbox{in $\Omega\cap B_r(\PZer)\,\,$ for some $r>0$.}
\end{equation}
This in particular implies the following:
if $\PZer\notin \partial\Omega$, then
\eqref{vorticityEq-weak} holds for all smooth $\zeta$, since we can modify $\zeta$ outside $\Omega$ in this case
so that the modified function $\zeta$ satisfies \eqref{case-3-vortEq} for some $r>0$,
and this modification clearly does not affect \eqref{vorticityEq-weak} for $\zeta$.

Thus, the remaining proof is for the case that $\PZer\in \partial\Omega$. Note that this means that $\PZer$ is a
common endpoint of some of $\GammaExt_i$ and $\GammaExt_j$.

\medskip
{\bf 4.5}. Now we consider the  case that $\PZer$ is a common endpoint of some of $\GammaExt_i$ and $\GammaExt_j$, and fix $\zeta\in C^\infty(\bR^2)$.
Let $\psi\in C^\infty(\bR^2)$ be such that
$$
\mbox{$0\le\psi\le 1\,\,$ on $\bR^2$, $\qquad$ $\psi\equiv 0\,\,$ on $B_1$, $\qquad$
$\psi\equiv 1$ on $\bR^2\setminus B_2$.}
$$
 Let $\psi^r(\xxi)=\psi(\frac{\xxi}{r})$ for $r>0$. In particular,
\begin{equation}\label{grad-cutoff}
\psi^r\equiv 0 \;\;\mbox{in $B_r$},\qquad
 \psi^r\equiv 1 \;\;\mbox{in $\bR^2\setminus B_{2r}$},\qquad
|D\psi^r|\le \frac Cr, \qquad \mbox{supp}(D\psi^r)\subset B_{2r}.
\end{equation}
Here and below, the universal constant $C$ is independent of $r$, which may be different at different occurrence.
Then, for any small $r>0$, function $\zeta\psi^r$
satisfies \eqref{case-3-vortEq}, so \eqref{vorticityEq-weak} holds with the test function $\zeta\psi^r$ instead of $\zeta$.
Thus, we have
\begin{align}\label{vorticityEq-weak-r-ctoff}
&\int_\Omega \big( \rho f(X)(\vv\cdot\grad\zeta) +\rho g(X)\zeta\big)\psi^r\,{\rm d}\xxi
+
\int_\Omega  \rho f(X)(\vv\cdot\grad\psi^r) \zeta\,{\rm d}\xxi\nonumber\\
&\quad-
 \sum_{i=1}^\Nint\int_{\GammaInt_i}\rho f(X)  (\vv\cdot\nnu) \zeta \psi^r\,{\rm d}l=0.
\end{align}
We estimate the second integral in \eqref{vorticityEq-weak-r-ctoff}, by using \eqref{omega-over-rho-l2}, \eqref{grad-cutoff},
the boundedness of $\vv$ by
assumption \eqref{lower-reg-Th-i3},
and that $f'(X)$ is bounded on $\bR$:
$$
\bigg|  \int_\Omega \rho f(X) (\vv\cdot\grad\psi^r) \zeta\, {\rm d}\xxi \bigg|
\le\frac Cr \int_{\Omega\cap B_{2r}}(1+|X|)\,{\rm d}\xxi\le C\bigg( \int_{\Omega\cap B_{2r}} (1+|X|^2) \,{\rm d}\xxi  \bigg)^{\frac{1}{2}}\to 0\quad\,\,
\mbox{as }r\to 0.
$$
Notice that
$\psi^r\equiv 1$ in $\bR^2\setminus B_{2r}$ and $|\psi^r|\le 1$ in $B_{2r}$,
$(\rho,\vv)$ are bounded and $X\in L^2(\Omega)$, and $|(f(X), g(X))|\le C(1+|X|)$ from the assumptions of $f(X)$
and \eqref{f-g-vorticityRho-Renorm}.
Then, denoting by $L_r$ the difference between the first term in \eqref{vorticityEq-weak}
and the first term in \eqref{vorticityEq-weak-r-ctoff},
we have
$$
|L_r|=\bigg|\int_{\Omega\cap B_{2r}} \big( \rho f(X)(\vv\cdot\grad\zeta) +\rho g(X)\zeta\big)(1-\psi_r)\,{\rm d}\xxi \bigg|
\le C\int_{\Omega\cap B_{2r}}(1+|X|) \,{\rm d}\xxi\to 0\quad\,\,
\mbox{as } r\to 0.
$$
For the boundary integral, we obtain that, for each $i\in\{1, \cdots, \Nint\}$,
$$
\int_{\GammaInt_i}\rho f(X)  (\vv\cdot\nnu)\zeta \psi^r \,{\rm d}l
\rightarrow \int_{\GammaInt_i}\rho f(X)  (\vv\cdot\nnu)\zeta \,{\rm d}l
\qquad\,
\mbox{as } r\to 0.
$$
Indeed,  $\PZer\notin \overline{\GammaInt_i}$ by
Definition \ref{RiemannProblSolutStruct-1}\eqref{RiemannProblSolutStruct-1-i5},
so that  $\psi^r\equiv 1$ on $\GammaInt_i$ if $r<\frac 12\dist(\PZer, \GammaInt_i)$.

Combining the convergence facts shown above and sending $r\to 0$ in \eqref{vorticityEq-weak-r-ctoff},  we conclude \eqref{vorticityEq-weak}.

\medskip
{\bf 5.}
We now show that vorticity $\omega$ (and thus $X$) is smooth and not identically zero on $\GammaInt_1$.
Recall that $\GammaInt_1$ denotes a relatively open curve segment and $\GammaInt_1$ is a shock
by  Definition \ref{RiemannProblSolutStruct-1}\eqref{RiemannProblSolutStruct-1-i6}.

\begin{lemma}\label{vorticityOnShockLemma}
Under the assumptions of {\rm Theorem \ref{lower-reg-Th}},
vorticity $\omega$
on $\GammaInt_1$ from the $\Omega$--side is
continuous and not identically zero on $\GammaInt_1$, and is bounded on $\overline{\GammaInt_1}$.
\end{lemma}

This can be proved as follows:
First, it follows directly from the regularity of
$\GammaInt_1$ in Definition \ref{RiemannProblSolutStruct-1}\eqref{RiemannProblSolutStruct-1-i5}
and assumption
\eqref{lower-reg-Th-i2} of Theorem \ref{lower-reg-Th}
that $\omega$
on $\GammaInt_1$ from the $\Omega$--side is continuous on $\GammaInt_1$
and bounded on $\overline{\GammaInt_1}$.
Then, in the rest of the proof, its suffices to show that $\omega$ is not identically zero on $\GammaInt_1$.

Since equations \eqref{selfSimEuler}--\eqref{selfSimEuler-moment} and the Rankine-Hugoniot conditions \eqref{RH}
are invariant under the coordinate rotation and translation,
at any fixed point $P\in\GammaInt_1$, we can choose the coordinates $\xxi$
such that the $\xi_1$--direction is tangent
to $\GammaInt_1$ at $P$.
Then $\GammaInt_1$ is the graph of a function $f_s$ locally, \emph{i.e.}, $\GammaInt_1=\{\xxi\,:\, \xi_2=f_s(\xi_1)\}$
locally near $P$.
Thus, $f_s$ is in $C^2$ in a neighborhood of $P$ by
Definition \ref{RiemannProblSolutStruct-1}\eqref{RiemannProblSolutStruct-1-i5}, and we can obtain that $f'_s=0$ at point $P$ by
choosing the appropriate coordinate system.
Note that $(f'_s,-1)$ is a normal of $\GammaInt_1$, so that the Rankine-Hugoniot conditions \eqref{RH} become
\begin{equation}\label{RHf}
\begin{cases}
(\rho v_1-\rho_1 v_1^-)f_s'-(\rho v_2-\rho_1 v_2^-)=0,\\[1mm]
(\rho v_1^2+p-\rho_1(v_1^-)^2-p_1)f'_s-(\rho v_1v_2-\rho_1v_1^-v_2^-)=0,\\[1mm]
(\rho v_1v_2-\rho_1v_1^-v_2^-)f_s'-(\rho v_2^2+p-\rho_1 (v_2^-)^2-p_1)=0,
\end{cases}
\end{equation}
where $\vv^-=(v_1^-,v_2^-)=(u_1^--\xi_1, u_2^--\xi_2)$ with some constants $u_1^-$ and $u_2^-$, and $\vv=(v_1,v_2)$.
Taking the tangential derivative $\partial_{\bt}:=\partial_{\xi_1}+f_s'\partial_{\xi_2}$
of the Rankine-Hugoniot conditions \eqref{RHf} along $\GammaInt_1$ and using the condition that $f_s'=0$ at $P$,
we obtain that, at point $P$,
\begin{equation*}
\begin{cases}
(\rho v_1-\rho_1 v_1^-)f_s''-(\rho v_2)_{\xi_1}=0,\\[1mm]
(\rho v_1^2+p-\rho_1(v_1^-)^2-p_1)f''_s-(\rho v_1v_2)_{\xi_1}-\rho_1v_2^-=0,\\[1mm]
(\rho v_2^2)_{\xi_1}+c^2\rho_{\xi_1}=0.
\end{cases}
\end{equation*}

By equations \eqref{selfSimEuler}--\eqref{selfSimEuler-moment} and the definition: $\omega=(v_1)_{\xi_2}-(v_2)_{\xi_1}$,
it follows from a straightforward but long calculation that
\begin{align*}
(v_1)_{\xi_1}=&-1-\frac{c^2+v_2^2}{\rho|\vv|^2}v_1\rho_{\xi_1}+\frac{c^2-v_2^2}{\rho|\vv|^2}v_2\rho_{\xi_2}-\frac{v_1v_2}{|\vv|^2}\omega,\\
(v_1)_{\xi_2}=&-\frac{c^2-v_1^2}{\rho|\vv|^2}v_2\rho_{\xi_1}-\frac{c^2-v_2^2}{\rho|\vv|^2}v_1\rho_{\xi_2}+\frac{v_1^2}{|\vv|^2}\omega,\\
(v_2)_{\xi_1}=&-\frac{c^2-v_1^2}{\rho|\vv|^2}v_2\rho_{\xi_1}-\frac{c^2-v_2^2}{\rho|\vv|^2}v_1\rho_{\xi_2}-\frac{v_2
^2}{|\vv|^2}\omega,\\
(v_2)_{\xi_2}=&-1+\frac{c^2-v_1^2}{\rho|\vv|^2}v_1\rho_{\xi_1}-\frac{c^2+v_1^2}{\rho|\vv|^2}v_2\rho_{\xi_2}+\frac{v_1v_2}{|\vv|^2}\omega.
\end{align*}
Therefore, by a long computation, we obtain that $(\rho_{\xi_1}, \rho_{\xi_2},\omega)$ satisfy the linear system at $P$:
\begin{equation}\label{linSystOmega-Shock}
\begin{pmatrix}
a_1 & b_1 & c_1\\
a_2 & b_2 & c_2\\
a_3 & b_3 & c_3
\end{pmatrix}
\begin{pmatrix}
\rho_{\xi_1} \\ \rho_{\xi_2} \\ \omega
\end{pmatrix}
=
\begin{pmatrix}
d_1f''_s \\ d_2f''_s \\ 0
\end{pmatrix},
\end{equation}
where
\begin{align*}
&(a_1, b_1,c_1)=((c^2-2v_1^2-v_2^2)v_2,\, (c^2-v_2^2)v_1,\, \rho v_2^2),\\
&(a_2, b_2,c_2)=(2(c^2-v_1^2)v_1v_2,\,-(c^2-v_2^2)(v_2^2-v_1^2),\, 2\rho v_1 v_2^2),\\
& (a_3, b_3,c_3)=(3v_1^2v_2^2+v_1^2c^2+v_2^4-v_2^2c^2,\, -2(c^2-v_2^2)v_1v_2,\, -2\rho v_2^3),
\end{align*}
and
\begin{equation*}
d_1=-|\vv|^2\big(\rho v_1-\rho_1v_1^-\big),\qquad d_2=-|\vv|^2\big(\rho v_1^2+p-\rho_1 (v_1^-)^2-p_1\big).
\end{equation*}
Notice that
\begin{equation*}
\begin{vmatrix}
a_1 & b_1 & c_1\\
a_2 & b_2 & c_2\\
a_3 & b_3 & c_3
\end{vmatrix}
=\rho v_2 (c^2-v_2^2)^2|\vv|^4\neq0,
\end{equation*}
where we have used the fact that  $v_2=\vv\cdot\bn\neq0$ at $P$ by the entropy condition and the ellipticity assumption that $|\vv|<c$ on $\GammaInt_1$
in Theorem 3.2.
Therefore, from \eqref{linSystOmega-Shock},
\begin{equation*}
\omega=
\frac
{d_1
\begin{vmatrix}
a_2 & b_2\\
a_3 & b_3
\end{vmatrix}
-d_2
\begin{vmatrix}
a_1 & b_1\\
a_3 & b_3
\end{vmatrix}}
{\rho v_2 (c^2-v_2^2)^2|\vv|^4}f_s''.
\end{equation*}
Since $f'_s=0$ at $P$, it follows from \eqref{RHf} that
\begin{equation}\label{RHP}
\rho v_2-\rho_1 v_2^-=0,\,\,\,\, v_1=v_1^-, \,\,\,\,\rho v_2^2+p-\rho_1 (v_2^-)^2-p_1=0\qquad\,\,\mbox{at }P.
\end{equation}
Then
\begin{equation}\label{vorticityOnShockExpression}
\omega=\frac{v_1\big((\rho-\rho_1)v_2^2+(p-p_1)\big)}{\rho v_2}f_s''.
\end{equation}
Notice that
$\vv\cdot\nnu\le -C^{-1}$  on $\GammaInt_{1}$ by Definition \ref{RiemannProblSolutStruct-1}\eqref{RiemannProblSolutStruct-1-i6},
where $\vv$ on $\GammaInt_{1}$ is taken from the $\Omega$--side, and $\nnu$ is the outer normal with respect to $\Omega$.
By \eqref{shock-RHp}, this implies that
$\vv\cdot\nnu<0$ on $\GammaInt_{1}$ for $\vv$ taken from the $\Lambda\setminus\Omega$--side.
Using the  entropy condition \eqref{shock-entropy-RHp} on $\GammaInt_{1}$,
we obtain that  $\rho>\rho_1$ and $p>p_1$ on $\GammaInt_1$.
Moreover, $v_1(\hat P)=(\vv\cdot\ttau)(\hat P)\ne 0$
for point $\hat P\in \GammaInt_1$ specified in
Definition \ref{RiemannProblSolutStruct-1}(\ref{RiemannProblSolutStruct-1-i7}).
Therefore, $\omega\neq0$ at $\hat P$. Then the vorticity is not identically zero on
$\GammaInt_1$.

\medskip
{\bf 6.}
Using Lemma \ref{vorticityEqnProp}, we can formally choose $f(s)=s^2$ with $g(s)=0$
by \eqref{f-g-vorticityRho-Renorm}, and use $\zeta\equiv 1$ to obtain that, by \eqref{vorticityEq-weak}.
$$
0=
-\int_{\Shock} \big|\frac{\omega}{\rho} \big|^2 \rho \vv\cdot \nnu \,{\rm d}l>0.
$$
The strict inequality above follows from Definition \ref{RiemannProblSolutStruct-1}\eqref{RiemannProblSolutStruct-1-i6}
and Lemma \ref{vorticityOnShockLemma}.
This (formally) shows that assumption \eqref{v-rho-l2}
leads to a contradiction.
A minor technical point is that function $f(s)=s^2$ does not satisfy the assumption of Lemma \ref{vorticityEqnProp}:
$\|f'\|_{C^{0,1}(\bR)}<\infty$.

To make this rigorously, we approximate
$f(s)=s^2$ by the functions that satisfy the assumptions of Lemma \ref{vorticityEqnProp}
and verify the limit process
of this approximation.
More specifically, for any $M>1$, define $f_M\in C^1(\bR)$ by $f_M(0)=0$ and $f'_M(t)=2\min\{|t|, M\}\sign(t)$.
Then
\[
f_M(t)=
\begin{cases}
t^2\qquad&\mbox{if }|t|\leq M,\\
M^2+2M(|t|-M)\qquad&\mbox{if }|t|> M.
\end{cases}
\]
It follows that $\|f'_M\|_{C^{0,1}(\bR)}<\infty$
 and the function
defined by \eqref{f-g-vorticityRho-Renorm} is
\begin{equation}\label{defGM}
g_M(t)=
\begin{cases}
0&\mbox{if }|t|\leq M,\\
2(M^2-M|t|)\qquad&\mbox{if }|t|> M.
\end{cases}
\end{equation}
Now \eqref{vorticityEq-weak} holds with $f_M$, $g_M$, and any $\zeta\in C^\infty(\bR^2)$.
Choosing $\zeta\equiv1$, then we have
\begin{equation}\label{5.29}
\int_{\Omega}\rho g_M(X)\,{\mathrm{d}}\xxi=\int_{\GammaInt}\rho f_M(X)\, (\vv\cdot\nnu) \,{\rm d}l.
\end{equation}
We send $M\to\infty$. Clearly, $g_M(X)\to 0$ pointwise in $\Omega$.
Also, from \eqref{defGM}, we obtain
$$
|g_M(t)|\le 2t^2 \qquad \mbox{ for all $M>1$ and $t\in\bR$}.
$$
Thus, using that $X\in L^2(\Omega)$ and $\rho\in L^\infty(\Omega)$, we have
$$
\int_{\Omega}\rho g_M(X)\,{\mathrm{d}}\xxi\to 0\qquad\mbox{as $M\to\infty$},
$$
by the dominated convergence theorem.
Also, since $X\in L^\infty(\GammaInt)$ by assumptions \eqref{lower-reg-Th-i2}--\eqref{lower-reg-Th-i3},
then it follows from the explicit form of $f_M$ that
$$
\int_{\GammaInt}\rho f_M(X)\,(\vv\cdot\nnu) \,{\rm d}l=
\int_{\GammaInt}\rho |X|^2\,(\vv\cdot\nnu) \,{\rm d}l
\qquad\mbox{for all $M\ge \|X\|_{L^\infty(\GammaInt)}$}.
$$
Thus, sending $M\to\infty$ in \eqref{5.29}, we obtain
$$
\int_{\GammaInt}\rho |X|^2\,(\vv\cdot\nnu) \,{\rm d}l=0.
$$
Since  $\rho\in [C_0^{-1},\,C_0]$
by assumption \eqref{lower-reg-Th-i3}
and $\vv\cdot\nnu\le 0$ on $\GammaInt_i$, for $i=2,\cdots, \Nint$,
from  Definition \ref{RiemannProblSolutStruct-1}\eqref{RiemannProblSolutStruct-1-i6},
we obtain
$$
\int_{\GammaInt_1}\rho |X|^2\,(\vv\cdot\nnu) \,{\rm d}l\ge 0.
$$
This is a contradiction since $\rho\in [C_0^{-1},\,C_0]$,
$\vv\cdot\nnu\le -C^{-1}$ on $\GammaInt_1$ by Definition \ref{RiemannProblSolutStruct-1}\eqref{RiemannProblSolutStruct-1-i6},
and $\omega$ (and thus $X$) is continuous and not identically zero on $\GammaInt_1$ by Lemma \ref{vorticityOnShockLemma}.

This completes the proof of the fact that, under the assumptions of the theorem, it is not possible that 
\eqref{v-rho-l2} holds.

\smallskip
{\bf 7.} 
We now complete the proof as claimed.
On the contrary, assume that
\begin{equation}\label{v-l2-x}
\vv\in H^1(\Omega).
\end{equation}
Then, combining \eqref{v-l2-x} with the assumptions in the theorem, we have
\begin{equation}\label{propStep7-ThProof}
\vv\in H^1(\Omega)\cap L^\infty(\Omega), \quad\,\,
C^{-1}\le \rho\le C, \quad\,\,
{\tilde C}^{-1}\le p\le \tilde C,
\end{equation}
where $C,\; \tilde C\ \ge 1$, and we recall that $p=\frac{\rho^{\gamma}}{\gamma}$.
We show that, with these properties,  we can obtain from the weak form of 
the Euler system \eqref{weakSolEuler-Equal-cmass}--\eqref{weakSolEuler-Equal-cmoment} that
\begin{align}
\label{selfSimEuler-moment-mod-weak}
 \int_\Omega \big((\rho\vv\cdot \grad \vv+\rho\vv)\cdot\zeta- p\,\divg \zeta\big)\, {\rm d}\xxi=0,
\end{align}
for any $\zeta=(\zeta_1, \zeta_2)\in C^1_{\rm c}(\Omega; \bR^2)$.
To show \eqref{selfSimEuler-moment-mod-weak}, we compute for $j=1,2$, using \eqref{propStep7-ThProof}:
\begin{equation}\label{calc1Step7-ThProof}
\begin{split}
  \int_\Omega \rho \sum_{i=1}^2v_i  v_j \partial_i\zeta_j \, {\rm d}\xxi & =
  \int_\Omega \rho \sum_{i=1}^2v_i \partial_i( v_j \zeta_j )\, {\rm d}\xxi  - \int_\Omega \rho \sum_{i=1}^2v_i  (\partial_i v_j) \zeta_j  \, {\rm d}\xxi \\
   & = \int_\Omega \rho \vv\cdot \grad( v_j \zeta_j )\, {\rm d}\xxi  - \int_\Omega \rho (\vv\cdot\grad v_j) \zeta_j  \, {\rm d}\xxi.
\end{split}
\end{equation}
Next, we note that, by the properties in \eqref{propStep7-ThProof}, we can use the test function $\phi=v_j\zeta_j$
in \eqref{weakSolEuler-Equal-cmass} to obtain
$$
\int_\Lambda \rho\vv\cdot\grad (v_j\zeta_j)\, {\rm d}\xxi = \int_\Omega 2\rho v_j\zeta_j\, {\rm d}\xxi.
$$
Substituting this into the last expression of \eqref{calc1Step7-ThProof} for $j=1,2$, we have
$$
\int_\Omega \rho\vv\otimes \vv: D\zeta\, {\rm d}\xxi= \int_\Omega \big(2\rho v-\rho(\vv\cdot\grad) \vv\big)\cdot \zeta\, {\rm d}\xxi.
$$
Substituting this into \eqref{weakSolEuler-Equal-cmoment}, we obtain \eqref{selfSimEuler-moment-mod-weak}.

From
\eqref{selfSimEuler-moment-mod-weak} and the fact that
$\rho\vv\cdot \grad \vv+\rho\vv \in L^2(\Omega)$ by \eqref{propStep7-ThProof}, 
it follows from \eqref{selfSimEuler-moment-mod-weak}
that the weak derivative of $p$ exists in $\Omega$ and that $\nabla p \in L^2(\Omega)$. 
Then, using \eqref{propStep7-ThProof}, we have   
$$p\in H^1(\Omega)\cap L^\infty(\Omega).
$$
From this, since $\rho=(\gamma p)^{1/\gamma}$ and ${\tilde C}^{-1}\le p\le \tilde C$,
we obtain 
$$\rho\in H^1(\Omega)\cap L^\infty(\Omega).
$$
Combining this with \eqref{v-l2-x}, we obtain that $(\rho, \vv)\in  H^1(\Omega)$,
which contradicts \eqref{v-rho-l2-b}.
Then we conclude that \eqref{v-l2-x} is false.
\end{proof}

\section{Applications to Transonic Shock problems}
\label{SectionApplications}
In this section, we employ the general framework of Definition \ref{RiemannProblSolutStruct-1}
and the main theorem, Theorem \ref{lower-reg-Th},
established in \S 3 to analyze the low regularity of entropy solutions of
several transonic shock problems including the regular shock reflection problem,
the Prandtl reflection problem, the Lighthill diffraction problem, and the Riemann problem with four-shock interactions.

First of all, in verifying the conditions of Definition \ref{RiemannProblSolutStruct-1},
condition \eqref{RiemannProblSolutStruct-1-i7} is often difficult to be verified directly.
The following lemma is useful for that; in fact, it is used in all the applications we describe below.

\begin{lemma}\label{PtWithNonzeroTangVelocCurv-lemma}
Let $(\rho,\vv)$ be a solution of the Riemann problem that satisfies conditions
\eqref{RiemannProblSolutStruct-1-i1}--\eqref{RiemannProblSolutStruct-1-i6} of
{\rm Definition \ref{RiemannProblSolutStruct-1}} and assumption \eqref{lower-reg-Th-i2} of {\rm Theorem \ref{lower-reg-Th}}.
Assume  that
\begin{enumerate}[\rm (a)]
\item \label{PtWithNonzeroTangVelocCurv-lemma-i1}
$\GammaInt_1=\partial\Omega\cap\partial\Lambda_j$ for some $j\in\{1, \cdots, M\}$, where
 $\Lambda_j$ are defined in {\rm Definition \ref{RiemannProblSolutStruct-1}\eqref{RiemannProblSolutStruct-1-i1}}.
\item \label{PtWithNonzeroTangVelocCurv-lemma-i2}
$\GammaInt_1$ is not a segment of straight line. In particular,
denoting by $P_1$ and $P_2$ the endpoints of $\GammaInt_1$,
there exists a point $P^* \in \overline{\Gamma_1^{\rm int} } \setminus \{ P_1\}$ such that $\bt(P_1)\ne\pm\bt(P^*)$,
where $\bt(\cdot)$ is a unit tangent vector to $\GammaInt_1$ at a point.
\item \label{PtWithNonzeroTangVelocCurv-lemma-i3}
$(\vv\cdot\bt)(P_1)\ne 0$.
\end{enumerate}
Then condition \eqref{RiemannProblSolutStruct-1-i7} of
{\rm Definition \ref{RiemannProblSolutStruct-1}} is satisfied.
\end{lemma}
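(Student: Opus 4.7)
My plan is to parametrize $\GammaInt_1$ by arc length $s\in[0,L]$ with $s=0$ at $P_1$, to denote by $\xxi(s)$ the point on the curve, and to track the continuous scalar function
\[
f(s):=\vv(\xxi(s))\cdot\bt(s),
\]
where the trace of $\vv$ on $\GammaInt_1$ is taken from the $\Omega$--side (well-defined and continuous on $\overline{\GammaInt_1}$ by assumption \eqref{lower-reg-Th-i2} of Theorem \ref{lower-reg-Th}). I then aim to locate a parameter $\hat s\in(0,L)$ with $\kappa(\hat s)\ne 0$ and $f(\hat s)\ne 0$, so that $\hat P:=\xxi(\hat s)$ verifies condition \eqref{RiemannProblSolutStruct-1-i7} of Definition \ref{RiemannProblSolutStruct-1}. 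Assumption (c) gives $f(0)\ne 0$, and I would set
\[
a:=\sup\{s\in[0,L]:\GammaInt_1\text{ is a straight segment on }[0,s]\},
\]
which satisfies $a<L$ by (b); the curvature $\kappa$ vanishes identically on $(0,a)$, and, by the definition of $a$ together with continuity of $\kappa$ in the relative interior of $\GammaInt_1$, every right-neighborhood of $a$ contains a point with $\kappa\ne 0$.

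The crucial structural input is assumption (a): the state on the $\Lambda_j$--side is the constant $(\rho_j,\uu_j)$, so the Rankine--Hugoniot identity $[\vv\cdot\bt]=0$ on $\GammaInt_1$ yields the pointwise formula
\[
f(s)=(\uu_j-\xxi(s))\cdot\bt(s)\qquad\text{on }\GammaInt_1.
\]
Restricted to the straight portion $[0,a]$, where $\bt(s)\equiv\bt(0)=:\bt_0$ and $\xxi(s)=P_1+s\bt_0$, this formula collapses to the affine expression $f(s)=f(0)-s$; in particular, $f(a)=f(0)-a$.

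I would then finish by a case split. If $f(a)\ne 0$, continuity of $f$ supplies $\delta>0$ with $f\ne 0$ on $[a,a+\delta]$, and choosing any $\hat s\in(a,a+\delta)$ with $\kappa(\hat s)\ne 0$ closes this case. The delicate situation, and what I expect to be the main obstacle, is the coincidence $f(a)=0$ (equivalently $a=f(0)$), where $f$ reaches the \emph{border} of the straight portion exactly at a zero. Here I would differentiate the Rankine--Hugoniot identity above using $\xxi'(s)=\bt(s)$ and the Frenet relation $\bt'(s)=\kappa(s)\bn(s)$ to obtain
\[
f'(s)=-1+\kappa(s)\,(\uu_j-\xxi(s))\cdot\bn(s).
\]
Continuity of $\kappa$ on $(0,L)$, furnished by the $C^2$ regularity of $\GammaInt_1$ in its relative interior, together with $\kappa\equiv 0$ on $(0,a)$, forces $\kappa(a)=0$, and hence $f'(a)=-1\ne 0$. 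Strict monotonicity of $f$ at $a$ then delivers a right-neighborhood of $a$ on which $f$ is nonzero away from $a$ itself, and I can once more select $\hat s$ in this punctured neighborhood with $\kappa(\hat s)\ne 0$. In either case, the point $\hat P:=\xxi(\hat s)$ meets all requirements of condition \eqref{RiemannProblSolutStruct-1-i7}.
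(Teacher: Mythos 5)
Your proposal is correct and follows essentially the same route as the paper: both reduce $\vv\cdot\bt$ on $\GammaInt_1$ to the constant-state expression via $[\vv\cdot\bt]=0$ from \eqref{shock-RHp}, identify the maximal straight initial portion of the shock (your $\xxi(a)$ is the paper's point $\hat Q$), and split on whether the tangential pseudo-velocity vanishes there (the paper's dichotomy $\hat Q\ne Q$ versus $\hat Q=Q$). Your handling of the degenerate case via $f'(a)=-1+\kappa(a)(\uu_j-\xxi(a))\cdot\bn(a)=-1$ is just a cleaner, Frenet-formula version of the paper's local graph estimate $(\vv_1\cdot\bt)(P)=\frac{-\xi_1+O(\eps)\xi_1}{\sqrt{1+(f'(\xi_1))^2}}\ne 0$ near $Q$, so the two arguments coincide in substance.
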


\begin{proof}
\newcommand{\SOne}{S}
By re-indexing sets $\Lambda_j$, we can assume that $\GammaInt_1=\partial\Omega\cap\partial\Lambda_1$.
Then $(\rho_1, \vv_1)$ is the uniform state in $\Lambda_1$, where $\rho_1$ is constant and
$\vv_1=(u_1^{(1)}, u_2^{(1)})-\xxi$ with a constant state $(u_1^{(1)}, u_2^{(1)})$. This state is called state $(1)$.

We show the existence of a point $\hat P$ in the relative interior of $\GammaInt_{1}$ such that the
curvature of $\GammaInt_{1}$ is non-zero at $\hat P$ and $(\vv\cdot\bt)(\hat P)\ne 0$.
Denote by  $\SOne$ the line tangential to $\overline{\GammaInt_1}$ at $P_1$.

Denote by $Q$ the intersection point of line $\SOne$ and line $L$ through center $O_1=(u_1^{(1)}, u_2^{(1)})$ of state $(1)$
perpendicular to $\SOne$. Note that,
for any point $P$, $\vv_1=O_1-P$ so that $(\vv_1\cdot\bt_{S})(Q)=0$ and $(\vv_1\cdot\bt_{S})(P)\neq0$
for all $P\in \SOne\setminus{Q}$, where we recall that $\vv_1$ is the pseudo-velocity of the uniform state
in $\Lambda_1$ which also defines $\vv_1$ on the whole $\bR^2$.

Denote by $\hat Q$ the point on $\SOne$ such that $\GammaInt_1$ coincides with $\SOne$ between points $P_1$ and $\hat Q$, but not on any larger interval
extended through $\hat Q$.
Note that it is possible that $\hat Q=P_1$,
but $\hat Q\ne P^*$ since the tangential line to $\overline{\GammaInt_1}$ at $P^*$ is not parallel to $L$
by our condition \eqref{PtWithNonzeroTangVelocCurv-lemma-i2}.

If $\hat Q\ne Q$, then $(\vv\cdot\bt)(\hat Q)=(\vv_1\cdot\bt)(\hat Q)\ne 0$,
where $\vv$ is the velocity on $\GammaInt_1$ from the $\Omega$--side and we have used \eqref{shock-RHp}.
Also, from the definition of $\hat Q$, in any neighborhood of $\hat Q$, there exists a point $P\in \GammaInt_1$ with nonzero curvature.
Thus, it follows from the $C^2$--regularity of $\GammaInt_1$ in its relative interior
(by condition \eqref{RiemannProblSolutStruct-1-i5} of Definition \ref{RiemannProblSolutStruct-1})
and the continuity of $\vv$ in $\Omega$ near and up to $\GammaInt_1$ (by
assumption \eqref{lower-reg-Th-i2} of Theorem \ref{lower-reg-Th})
that there exists a point $\hat P\in (\GammaInt_1)^0$ with non-zero curvature and
$(\vv\cdot\bt)(\hat P)\ne 0$.

Therefore, the remaining possibility is that $\hat Q=Q$.
Note that $(\vv_1\cdot\bt_{S})(Q)=0$.
Moreover, using condition \eqref{PtWithNonzeroTangVelocCurv-lemma-i3},
the Rankine-Hugoniot conditions, and the regularity of $\GammaInt_1$ and $(\rho, \vv)$
given in condition \eqref{RiemannProblSolutStruct-1-i5} of Definition \ref{RiemannProblSolutStruct-1}
and assumption \eqref{lower-reg-Th-i2} of Theorem \ref{lower-reg-Th},
we see that $(\vv_1\cdot\bt_{S})(P_1)=(\vv\cdot\bt_{S})(P_1)\ne 0$.
This implies that $Q\ne P_1$.
Also, since $\hat Q\ne P^*$ as we discussed above, then $Q\ne P^*$ for the present case.
Thus, for the present case $Q=\hat Q$, it follows that $Q$ is
an interior point of  $\GammaInt_1$,
and the part of $\GammaInt_1$ between $P_1$ and $Q$ lies on the straight line $\SOne$.
In particular, line $\SOne$ is tangential to $\GammaInt_1$ at $Q$.
We now shift and rotate the coordinates to have the origin at $Q$ and
the coordinates $\xi_1$ and $\xi_2$ to be along $\SOne$ and $L$, respectively.
Then  $O_1=(0, \tilde v_1)$ for some $\tilde v_1\in\bR$ (in fact, $\tilde v_1\ne 0$ by condition
    \eqref{RiemannProblSolutStruct-1-i5} of Definition \ref{RiemannProblSolutStruct-1}, but this
    will not be used below).
To fix notation, let the $\xi_1$--axis
along $\SOne$ be oriented so that $P_1=(\xi_{P_1}, 0)$ with $\xi_{P_1}<0$.
Since $\SOne=\{\xi_2=0\}$ is tangential to $\GammaInt_1$ at $Q=(0,0)$,
then curve $\GammaInt_1\cap B_r(Q)$ is a graph for some $r>0$:
There exists $f\in C^2(\bR)$ such that
$$
\GammaInt_1\cap B_r(Q)=\big\{(\xi_1, f(\xi_1))\;:\;\xi_1\in (a,b)\big\}\qquad
\mbox{for some $a<0$, $b>0$, and $f\equiv 0$ on $(a,0)$},
$$
where the last assertion holds because $\GammaInt_1$ lies on $\SOne$ between $P_1$ and $Q$.
Thus, $f'(0)=f''(0)=0$ so that $|f'(\xi_1)|\le O(\eps)\xi_1$ for all $\xi_1\in (-\eps, \eps)$,
where $O(\eps)\to 0$ as $\eps\to 0^+$.
For any $P=(\xi_1, f(\xi_1))\in \GammaInt_1\cap B_r(Q)$, we have
   $$
   \vv_1(P)=O_1-P=(-\xi_1, \,\tilde v_2-f(\xi_1)), \qquad
   \bt(P)=\frac{(1,\,f'(\xi_1))}{\sqrt{1+(f'(\xi_1))^2}}.
   $$
Then
$$(
   \vv_1\cdot\bt)(P)=\frac {-\xi_1+(\tilde v_2-f(\xi_1))f'(\xi_1)}{\sqrt{1+(f'(\xi_1))^2}}
   =\frac {-\xi_1+O(\eps)\xi_1}{\sqrt{1+(f'(\xi_1))^2}}\ne 0\qquad\mbox{for $\xi_1\in (0, \eps)$ if $\eps$ is small.}
$$
Thus, $\vv\cdot\bt=\vv_1\cdot\bt\ne 0$ at any $P=(\xi_1, f(\xi_1))$ with $\xi_1\in (0, \eps)$.
Since $\hat Q=Q$, {\it i.e.},  for every $\eps>0$,
there exists a point $\xi_1\in (0, \eps)$ such that $\GammaInt_1$ has non-zero
curvature at $P=(\xi_1,  f(\xi_1))$,
it follows that there exists a point $\hat P$, at which the tangential velocity and the curvature of $\GammaInt_1$ are nonzero,
in the present case. This completes the proof.
\end{proof}

\subsection{Lower Regularity of the Regular Shock Reflection Solutions for the Isentropic Euler System}
The first Riemann problem we address is
the regular shock reflection problem for the isentropic Euler system \eqref{isentropisEulersystem}.
When a plane incident shock $S_0:=\Gamma_{\rm shock}^0$ hits a two-dimensional wedge,
a shock reflection-diffraction configuration takes shape.
The incident shock $S_0$ separates two constant states: state $(0)$
with velocity $\uu^{(0)}=(0,0)$ and density $\rho_0$ ahead of the shock, and
state (1) with velocity $\uu^{(1)}=(u^{(1)}_1,0)$ and density $\rho_1$
behind the shock, where $\rho_1>\rho_0$, and ${u}^{(1)}_1>0$ is
determined by $(\rho_0, \rho_1, \gamma)$ through the Rankine-Hugoniot conditions on $S_0$.
The incident shock $S_0$ moves in the direction of the $x_1$--axis and hits the wedge vertex at the initial time.
The slip boundary
condition $\uu\cdot\bn=0$ is prescribed on the wedge boundary,
where $\uu$ is the velocity of gas.
Since state (1) does not satisfy the boundary condition,
the shock reflection-diffraction configuration
occurs at later time, which is self-similar.
Depending on the
flow parameters and the wedge angle, there may be various patterns of shock reflection-diffraction
configurations,
including Regular Reflection and Mach Reflection.

The regular reflection problem is a lateral Riemann problem in the region
$$
\Lambda=\bR^2_+\setminus\big\{\mathbf{x}\; : \; x_1>0, \, 0<x_2<x_1\tan\theta_{\rm w}\big\},
$$
where $\bR^2_+=\bR^2\cap\{x_1>0\}$.
We seek functions $(\rho, \uu)(\mathbf{x}, t)$
satisfying  system \eqref{isentropisEulersystem} in $\Lambda$
with the boundary condition $\uu\cdot\nnu=0$ on $\partial\Lambda$ and the initial data:
\begin{equation*}
(\rho, \uu)(\mathbf{x}, 0)=
\begin{cases}
 (\rho_0, \uu^{(0)})\qquad  &\mbox{if }\; \mathbf{x}\in\Lambda\cap\{x_1<0\}, \\
 (\rho_1, \uu^{(1)})\qquad\, &\mbox{if }\; \mathbf{x}\in\Lambda\cap\{x_1>0\}.
\end{cases}
\end{equation*}

This initial-boundary value problem is invariant under scaling \eqref{urho-self-similar},
so we seek self-similar solutions $(\rho, \vv)=(\rho, \vv)(\xxi)$, where the self-similar variables $\xxi$
and the pseudo-velocity
$\vv=\uu-\xxi$ are introduced in \S \ref{Euler-Eq-subseq}.

\begin{figure}[htp]
\begin{center}
	\begin{minipage}{0.51\textwidth}
		\centering
		\includegraphics[width=0.6\textwidth]{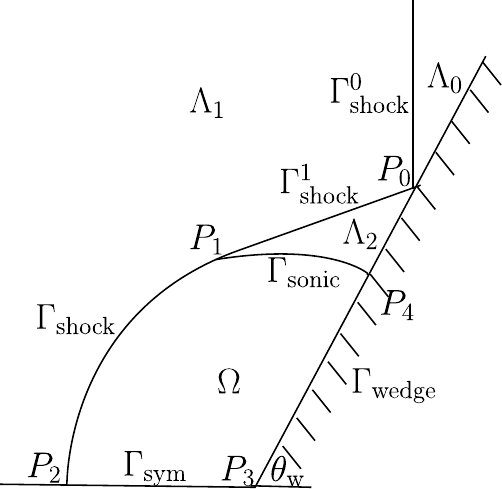}
\caption{Supersonic regular reflection}
\label{figure:shock refelction}
		\end{minipage}
	\hspace{-0.25in}
	\begin{minipage}{0.51\textwidth}
		\centering
		\vspace{5mm}
       \includegraphics[width=0.55\textwidth]{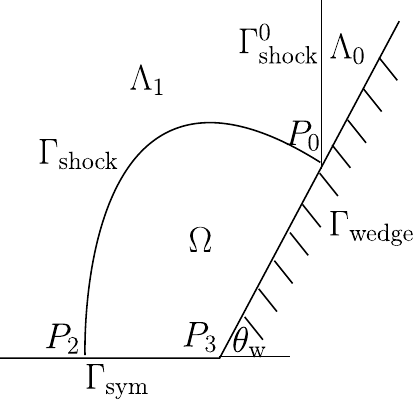}
\caption{Subsonic regular reflection}
\label{figure:shock refelction-subs}
			\end{minipage}
\end{center}
\end{figure}

First, consider the problem with an assumption on the symmetry with respect to
the $x_1$--axis.
Then we can consider only the upper half-plane $\{x_2>0\}$
and prescribe the slip boundary condition $\uu\cdot\nnu=0$
on the symmetry line $\{x_2=0\}$, so that there is only one reflection point $P_0$ to
be considered (see Figs \ref{figure:shock refelction}--\ref{figure:shock refelction-subs}).

The regular shock reflection-diffraction configuration is characterized
by the fact that the reflection occurs at point $P_0$
of the intersection of the incident shock with the wedge boundary.
Figs.  \ref{figure:shock refelction}--\ref{figure:shock refelction-subs}
show the structure of regular shock
reflection-diffraction configurations in self-similar coordinates.

A necessary condition for the existence of a regular reflection-diffraction
configuration is the existence of
the constant state $(2)$ and the reflected shock line such that state $(2)$ satisfies both
the slip boundary condition on the wedge and the Rankine-Hugoniot conditions
\eqref{RH}--\eqref{shock-RHp} on the shock
with state $(1)$ across the reflected shock line written at $P_0$.
These conditions lead to a system of algebraic equations for the constant velocity and density of state $(2)$.
Moreover, the entropy condition  \eqref{weakSolEuler-Equal-entr}
becomes an inequality in terms of the parameters
of states $(1)$ and $(2)$.

It is well-known (see {\it e.g.} \cite[Chapter 18]{CFBOOK2018} for the full Euler system case;
the argument for the isentropic Euler system case is similar) that,
given the parameters of states $(0)$ and $(1)$, there
exists a detachment angle $\theta_{\rm w}^{\rm d}\in (0, \frac{\pi}{2})$ such
that the system of algebraic equations for parameters of state (2) has two solutions for each wedge angle
$\theta_{\rm w}\in (\theta_{\rm w}^{\rm d}, \frac \pi 2)$ such that the entropy condition \eqref{weakSolEuler-Equal-entr}
is satisfied for the resulting two-shock configuration. These two solutions become
equal when $\theta_{\rm w}=\theta_{\rm w}^{\rm d}$.
Thus, two types of two-shock configurations occur at $P_0$ for each
$\theta_{\rm w}\in (\theta_{\rm w}^{\rm d}, \frac\pi 2)$.
For such $\theta_{\rm w}$,
state $(2)$ with the smaller density is called a weak state $(2)$.
It is expected that the weak state $(2)$ is physical,
while the strong state $(2)$ is not stable as the wedge angle tends to $\frac{\pi}{2}$
(as shown
in \cite{CFANN2010} for the potential flow case).
In the case of the potential flow model, the global existence
of regular shock reflection solutions for
all $\theta_{\rm w}\in (\theta_{\rm w}^{\rm d}, \frac{\pi}{2})$
with $(\rho, \uu)$ at $P_0$ determined by the weak states $(2)$
has been established in
\cite{CFANN2010,CFBOOK2018}. For the full or isentropic Euler system,
the existence of regular reflection solutions
is an outstanding open problem.
From now on, state $(2)$ always refers to the weak state $(2)$, which is unique for each
$\theta_{\rm w}\in (\theta_{\rm w}^{\rm d}, \frac{\pi}{2})$.

Furthermore, it follows from the Rankine-Hugoniot conditions \eqref{RH}--\eqref{shock-RHp} on the
straight shock $S_1:=\Gamma_{\rm shock}^1$ between states $(1)$ and $(2)$,
the slip boundary condition on the wedge for state $(2)$,
$\vv_1(\xxi)=(u_1^{(1)}, 0)-\xxi$, and $\vv_2(\xxi)=\uu_2-\xxi$ that
\begin{equation}\label{S1noteVertical}
 \mbox{\it The shock line $S_1$ between states $(1)$ and $(2)$
is not vertical for all $\theta_{\rm w}\in (\theta_{\rm w}^{\rm d}, \frac\pi 2)$. }
\end{equation}
Moreover, from the entropy condition \eqref{shock-entropy-RHp} on $S_1$, we have
\begin{equation}\label{rho2LgRho_1}
\rho_2>\rho_1.
\end{equation}

Depending on the wedge angle,  state $(2)$ can be either supersonic or
subsonic at $P_0$, {\it i.e.}, either $|\vv_2(P_0)|>c_2$ or the opposite inequality holds,
where $c_2=\rho_2^\frac{\gamma-1}2$ is the (constant) speed of sound of state (2).
Moreover, for $\theta_{\rm w}$ near $\frac\pi 2$
(resp. for $\theta_{\rm w}$ near $\theta_{\rm w}^{\rm d}$),
 state $(2)$ is supersonic
(resp. subsonic) at $P_0$.
The type of state $(2)$ at $P_0$ for a given wedge
angle $\theta_{\rm w}$  determines the type of reflection, supersonic or subsonic,
as shown  in Fig. \ref{figure:shock refelction} and
Fig. \ref{figure:shock refelction-subs}, respectively.

\begin{definition}\label{def:weak solutionShockRefl}
$(\rho,\vv)\in L^{\infty}(\Lambda)$, with
$\rho\in BV_{\rm loc}(\Lambda\cap {\mathcal N}_r(\partial\Lambda))$ for some $r>0$,
is called an entropy solution of the regular
shock reflection problem if $(\rho, \vv)$ is an entropy
solution of system
\eqref{selfSimEuler}--\eqref{selfSimEuler-moment}
with slip boundary condition \eqref{Euler-slipBC-def}
in the sense of {\rm Definition \ref{weakSolEulerSlipBC}},
which satisfies
the asymptotic conditions{\rm :}
$$\displaystyle
\lim_{R\to\infty}\|(\rho, \vv)-(\bar{\rho},\bar{\vv})\|_{0, \Lambda\setminus
B_R(0)}=0,
$$
where
\begin{equation*}
(\bar{\rho},\bar{\vv})=
\begin{cases} (\rho_0, \vv_0) \qquad\mbox{for}\,\,\,
                         \xi_1>\xi_1^0,\\[1mm]
              (\rho_1, \vv_1) \qquad \mbox{for}\,\,\,
                          \xi_1<\xi_1^0,
\end{cases}
\end{equation*}
and $\xi_1^0>0$ is the location of the incident shock $S_0$ on the self-similar plane.
\end{definition}

Next, we define the points and lines in
Figs. \ref{figure:shock refelction}--\ref{figure:shock refelction-subs}.
The incident shock $S_0$ is line $\{\xi_1=\xi_1^0\}$ with
 $\xi_1^0=\frac{\rho_1{u}^{(1)}_1}{\rho_1-\rho_0}>0$.
The center,  $O_2=\uu^{(2)}=(u_1^{(2)}, u_2^{(2)})$, of the sonic circle
$B_{c_2}(O_2)$ of state $(2)$
lies on the wedge boundary between the reflection point $P_0$
and the wedge vertex $P_3$
for both the supersonic and subsonic cases.

Then,  for the supersonic case, {\it i.e.},
when $|{\mathbf v}_2(P_0)|=|P_0 O_2|>c_2$ so that
$P_0\notin \overline{B_{c_2}(O_2)}$,
we denote by
$P_4$ the {\it upper  point} of intersection of
$\partial B_{c_2}(O_2)$ with the wedge boundary
such that $O_2\in P_3P_4$.
Also, the sonic circle $\partial B_{c_2}(O_2)$ of state $(2)$ intersects
line $S_1$,
and one of the points of intersection, $P_1\in\Lambda$, is such that
segment $P_0P_1$ is outside $B_{c_2}(O_2)$.
Denote  the arc of $\partial B_{c_2}(O_2)$ by $\Gso=P_1P_4$.
The curved part of the reflected-diffracted shock is $\Gsh=P_1P_2$,
where $P_2\in\{\xi_2=0\}$.
Then we denote the line segments $\Gamma_{\rm sym}:=P_2P_3$ and $\Gw:=P_3P_4$.
The lines and curves $\Gsh$, $\Gso$, $\Gamma_{\rm sym}$, and $\Gw$ do not have
common points, except for their endpoints $P_1, \cdots, P_4$.
Thus, $\Gsh\cup\Gso\cup\Gamma_{\rm sym}\cup\Gw$ is a closed curve without
self-intersection.
Denote by $\Omega$ the bounded domain restricted by this curve.

For the subsonic/sonic case, {\it i.e.},
when $|{\mathbf v}_2(P_0)|=|P_0 O_2|\le c_2$ so that $P_0\in \overline{B_{c_2}(O_2)}$,
the curved reflected-diffracted shock
is $\Gsh=P_0P_2$, which does not have common interior points with
the line segments $\Gamma_{\rm sym}=P_2P_3$ and $\Gw=P_0P_3$.
Then $\Gsh\cup\Gamma_{\rm sym}\cup\Gw$ is a closed curve without
self-intersection, and $\Omega$ is the bounded domain restricted
by this curve.

Furthermore, in some parts of the argument below, it is convenient to
extend problem \eqref{selfSimEuler}--\eqref{selfSimEuler-moment} and (\ref{Euler-slipBC-def}),
given in $\Lambda$ by even reflection about the $\xi_1$--axis, {\it i.e.},
defining
$$
(\rho^{\rm ext}, \vv^{\rm ext})(-\xi_1, \xi_2)
:=(\rho^{\rm ext}, \vv^{\rm ext})(\xi_1, \xi_2)
\qquad\,\,\, \mbox{for any $\xxi=(\xi_1,\xi_2)\in\Lambda$}.
$$
Then $(\rho^{\rm ext}, \vv^{\rm ext})$ is defined in region
$\Lambda^{\rm ext}$ obtained from $\Lambda$ by adding the reflected region $\Lambda^-$,
{\it i.e.}, $\Lambda^{\rm ext}=\Lambda\cup\{(\xi_1, 0)\; :\;\xi_1<0\}\cup\Lambda^-$.
In a similar way, region $\Omega$ and curves
$\Gsh\subset\partial\Omega$ and $P_0P_2$ can be extended
into the corresponding
region $\Omega^{\rm ext}$ and curves
$\Gsh^{\rm ext}\subset\partial\Omega^{\rm ext}$ and $P_0P_2 P_0^{\rm ext}$.

\medskip
Now we give the definition of a global regular shock reflection solution.
The intuition for the definition is the following:
The regular shock reflection solution is an entropy solution of the regular shock reflection
problem in the sense of Definition \ref{def:weak solutionShockRefl} which has the structure shown
in Figs. \ref{figure:shock refelction}--\ref{figure:shock refelction-subs},
where $(\rho, \vv)$ coincides with those of states $(0)$, $(1)$, and $(2)$ in their respective regions.
As we discussed above, the necessary condition is the existence of
state $(2)$, which means that the wedge angle satisfies $\theta_{\rm w}\in (\theta_{\rm w}^{\rm d}, \frac{\pi}{2})$.
Moreover, it is expected that the solution is relatively regular in $\Omega$.
However, we show below that it is not possible that $\vv\in H^1(\Omega)$.
On the other hand, from the physical/computational experiments and the theoretical results
in the case of potential flow,
it is expected that $\Shock$ is a smooth curve and $\vv$ is smooth near and up to $\Shock\cup \Sonic$ in $\Omega$.
Moreover, the regularity discussed is expected for the shock reflection-diffraction configuration extended to $\{\xi_1<0\}$
by the even reflection about the $\xi_1$--axis (since this is the original shock reflection-diffraction configuration).
In particular, the extended shock curve $\Gsh^{\rm ext}$ is smooth, which shows
that $\Shock$ must be orthogonal to $\Symm$ at $P_2$.
Then, noting that the pseudo-velocity of state $(1)$ is
$\vv_1(\xxi)=(u_1^{(1)}, 0)-\xxi$ with $u_1^{(1)}>0$ and $\{P_2\}=\overline\Shock\cap \overline\Symm\subset\{\xi_1<0\}$,
it follows that vector $\vv_1$ on $\Shock$ near $P_2$ points into $\Omega$.
It is then expected that this holds on the whole shock $\Shock$, unless the jumps of the velocity and the density across
the reflected-diffracted shock are degenerate at some points (which are not expected).
Then it follows from \eqref{shock-RHp} for $\Shock$
that $\vv$ on $\Shock$ from the $\Omega$--side also points into $\Omega$.
On the sonic arc (for the supersonic reflection),
the jump of the velocity is not expected, and velocity $\vv_2$ of state $(2)$ on $\Sonic$ points into $\Omega$ (as $\vv_2$ points
along the radial direction of the sonic circle of state $(2)$ towards its center $O_2\in P_3P_4$).
Therefore, it is expected that
\begin{equation}\label{velocityDirectionOnShock}
  \vv\cdot \nnu \le -C^{-1}\qquad
  \mbox{ on $\overline\Shock\cup \overline\Sonic$}
\end{equation}
for some $C>0$,
where $\vv$ on the curves is taken from the $\Omega$--side and $\nnu$ is the outer normal with respect to $\Omega$.

Based on the remarks above, we define the notion of regular shock reflection
solutions:

\begin{definition}\label{regReflSolDef}
Fix the wedge angle $\theta_{\rm w}\in (\theta_{\rm w}^{\rm d}, \frac{\pi}{2})$, and let
domain $\Lambda=\Lambda(\theta_{\rm w})$ as defined above.
An entropy solution
$(\rho,\vv)$ of the
regular shock reflection
problem in the sense of
{\rm Definition \ref{def:weak solutionShockRefl}} is called a
regular shock reflection solution if $(\rho,\vv)$ satisfies
the following additional properties{\rm :}

If state $(2)$ for $\theta_{\rm w}$ is supersonic at $P_0$, i.e., $|\vv_2(P_0)|>c_2$,
the  solution has the supersonic reflection structure as on {\rm Fig. \ref{figure:shock refelction}}.
If state $(2)$ for $\theta_{\rm w}$ is subsonic or sonic at $P_0$, i.e., $|\vv_2(P_0)|\le c_2$,
the  solution has the subsonic reflection structure as on {\rm Fig. \ref{figure:shock refelction-subs}}.
More specifically,
\begin{enumerate}[\rm (i)]
\item \label{regReflSolDef-i1}
The extended reflected-diffracted shock curve $P_0P_2P_0^{\rm ext}$ is $C^1$ up to its endpoints.

\item \label{regReflSolDef-i2}
$(\rho, \vv)$ is continuous in $\overline\Omega\cap {\mathcal N}_r(\Shock\cup\Sonic)$
for the supersonic reflection, and in $\overline\Omega\cap {\mathcal N}_r(\Shock)$ for the
subsonic or sonic reflection, for some $r>0$.

\item \label{regReflSolDef-i3}
The solution coincides with states $(0)$, $(1)$, and $(2)$ in their respective regions{\rm :}
for the supersonic reflection case,
\begin{equation*}
(\rho, \vv)=
\begin{cases} (\rho_0, \vv_0) \qquad\mbox{for $\xi_1>\xi_1^0$ and $\xi_2>\xi_1 \tan\theta_{\rm w}$},\\[1mm]
              (\rho_1, \vv_1) \qquad \mbox{for $\xi_1<\xi_1^0$ and above curve $P_0P_1P_2$}, \\[1mm]
              (\rho_2, \vv_2) \qquad \mbox{in $P_0P_1P_4$},
\end{cases}
\end{equation*}
where $\xi_1^0>0$ is the location of the incident shock $S_0$ on the self-similar plane{\rm ;}
and for the subsonic or sonic reflection case,
\begin{equation*}
(\rho, \vv)=
\begin{cases} (\rho_0, \vv_0) \qquad\mbox{for $\xi_1>\xi_1^0$ and $\xi_2>\xi_1 \tan\theta_{\rm w}$},\\[1mm]
              (\rho_1, \vv_1) \qquad \mbox{for $\xi_1<\xi_1^0$ and above curve $P_0P_2$},
\end{cases}
\end{equation*}
and $\displaystyle \lim_{\xxi\in\Omega,\;\xxi\to P_0}(\rho, \vv)(\xxi)=(\rho_2, \vv_2)(P_0)$.

\item  \label{regReflSolDef-i4}
\eqref{velocityDirectionOnShock} holds  for some $C>0$, where $\vv$ on the curves is taken from the $\Omega$--side
and $\nnu$ is the outer normal with respect to $\Omega$.

\item  \label{regReflSolDef-i5}
The flow is pseudo-subsonic in $\Omega$ on and near $\Shock$, except for the sonic point $P_1$ for the supersonic
or sonic reflections, i.e.,
$|\vv|<c$ on $\overline\Shock\setminus\{P_1\}$ for supersonic and sonic reflections, and on
$\overline\Shock$ for strictly subsonic reflections, where $\vv$ on $\Shock$ is computed
from the $\Omega$--side.
\end{enumerate}
\end{definition}

\begin{remark}\label{shockNotStraight}
The curve, $\Shock$, cannot be a straight segment. Indeed, if the shock is a straight segment,
then it lies on a vertical line passing through $P_2$, since the tangent line to $\Shock$ at $P_2$ is vertical
by condition \eqref{regReflSolDef-i1} of {\rm Definition \ref{regReflSolDef}}.
On the other hand, the tangent to $\Shock$ at $P_1$ for the supersonic reflection and at $P_0$ for the subsonic
reflection is tangent to the straight shock $S_1$ between state $(1)$ and $(2)$,
where, for the subsonic case,
this follows from the property{\rm :}
$\displaystyle \lim_{\xxi\in\Omega,\;\xxi\to P_0}(\rho, \vv)(\xxi)=(\rho_2, \vv_2)(P_0)$
in  {\rm Definition \ref{regReflSolDef}\eqref{regReflSolDef-i3}} by
using the Rankine-Hugoniot conditions  \eqref{RH}--\eqref{shock-RHp} on the shock.
As we have shown above, the straight shock between states $(1)$ and $(2)$ is not vertical. This shows
that $\Shock$ cannot lie in a straight line.
\end{remark}

\begin{remark}\label{shockNotStraight-b}
For the supersonic reflections, it follows from conditions {\rm (ii)--(iii)} of  {\rm Definition \ref{regReflSolDef}} 
and the Rankine-Hugoniot conditions
that $(\rho, \vv)$ is continuous across the sonic arc $P_1P_4=\overline{\Sonic}${\rm :}
\begin{equation}\label{4.1a}
 \lim_{\xxi\in\Omega,\;\xxi\to P}(\rho, \vv)(\xxi)=(\rho_2, \vv_2(P))\qquad\mbox{for any $P\in\overline{\Sonic}$}.
\end{equation}
\end{remark}

\begin{theorem}\label{lower-reg-Th-RegRefl-a}
Let $(\rho,\vv)$ be a regular shock reflection solution in the sense of {\rm Definition \ref{regReflSolDef}}.
Assume that $(\rho,\vv)$ satisfies the following{\rm :}
\begin{enumerate}[\rm (i)]
\item\label{lower-reg-Th-RegRefl-i1-a}
The reflected-diffracted shock $\Shock=P_1P_2$ is $C^2$ in its relative interior, and $C^{1}$ up to endpoints $P_1$, where $P_1$
is replaced by $P_0$ if the subsonic shock reflection occurs at $P_0${\rm ;}
\item\label{lower-reg-Th-RegRefl-i2-a}
$(\rho, \vv)\in C^1\left((\Nbhd_\sigma(\Shock\cup\Sonic)\cap\overline\Omega)\setminus\{P_1\}\right)\cap C^{0,1}(\Nbhd_\sigma(\Shock\cup\Sonic)\cap\overline\Omega)$
for some $\sigma>0$, where $P_1$ is replaced by $P_0$ if the subsonic shock reflection occurs at $P_0${\rm ;}
\item\label{lower-reg-Th-RegRefl-i3-a}
$|\vv|\le C_0 $ and $C_0^{-1}\le \rho\le C_0$ in $\Omega$ for some $C_0\ge 1$.
\end{enumerate}
Then $\vv\notin H^1(\Omega)$.
\end{theorem}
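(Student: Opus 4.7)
The plan is to realize the regular shock reflection solution as an instance of the general framework of Definition \ref{RiemannProblSolutStruct-1} and then invoke Theorem \ref{lower-reg-Th}, whose conclusion is precisely $\vv\notin H^1(\Omega)$. The work is therefore to set up the structural pieces and verify the listed conditions.

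In the supersonic regular reflection case, take $M=3$ with $\Lambda_1, \Lambda_2, \Lambda_3$ the uniform-state regions of Definition \ref{regReflSolDef}\eqref{regReflSolDef-i3} corresponding to states $(0), (1), (2)$, and set $\GammaInt_1 := \Shock = P_1P_2$, $\GammaInt_2 := \Sonic = P_1P_4$, $\GammaExt_1 := \Gw$, $\GammaExt_2 := \Gsy$. In the subsonic/sonic case, take $M=2$, drop $\GammaInt_2$, and keep $\GammaInt_1 := \Shock = P_0 P_2$. Conditions \eqref{RiemannProblSolutStruct-1-i1}--\eqref{RiemannProblSolutStruct-1-i5} of Definition \ref{RiemannProblSolutStruct-1} follow at once from the configuration described before Definition \ref{regReflSolDef} and assumption \eqref{lower-reg-Th-RegRefl-i1-a}, noting that $\PZer = P_3$ (the wedge vertex) lies at positive distance from $\overline{\GammaInt}$; condition \eqref{RiemannProblSolutStruct-1-i6-0} is assumption \eqref{lower-reg-Th-RegRefl-i2-a}. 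For condition \eqref{RiemannProblSolutStruct-1-i6}, the bound $\vv\cdot\nnu \le -C^{-1}$ on $\Shock$ is Definition \ref{regReflSolDef}\eqref{regReflSolDef-i4}, while on $\Sonic$ (supersonic case) Remark \ref{shockNotStraight-b} gives $\vv = \vv_2 = O_2 - \xxi$ and the outer unit normal $\nnu = (\xxi - O_2)/c_2$, so that $\vv\cdot\nnu = -c_2 < 0$. The three hypotheses \eqref{lower-reg-Th-i2}--\eqref{lower-reg-Th-i4} of Theorem \ref{lower-reg-Th} are, respectively, assumptions \eqref{lower-reg-Th-RegRefl-i2-a}, \eqref{lower-reg-Th-RegRefl-i3-a}, and Definition \ref{regReflSolDef}\eqref{regReflSolDef-i5}.

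The main obstacle is condition \eqref{RiemannProblSolutStruct-1-i7}, the existence of a point on $\Shock$ where both the curvature and the tangential pseudo-velocity are nonzero. I would verify it via Lemma \ref{PtWithNonzeroTangVelocCurv-lemma}, taking its distinguished endpoint to be the endpoint of $\Shock$ on the state-$(2)$ side: the sonic point $P_1$ for the supersonic reflection, and the reflection point $P_0$ for the subsonic/sonic case. Hypothesis \eqref{PtWithNonzeroTangVelocCurv-lemma-i1} is immediate and hypothesis \eqref{PtWithNonzeroTangVelocCurv-lemma-i2} is Remark \ref{shockNotStraight}. For the key hypothesis \eqref{PtWithNonzeroTangVelocCurv-lemma-i3}, continuity of $\vv$ across $\Sonic$ (Remark \ref{shockNotStraight-b}) or the limit relation in Definition \ref{regReflSolDef}\eqref{regReflSolDef-i3} reduces $(\vv\cdot\bt)$ at the endpoint to $(\vv_2\cdot\bt)$, where $\bt$ is the common tangent to $\Shock$ and to the straight shock $S_1$ at that point (agreement of the two tangents follows from $C^1$-regularity combined with the Rankine-Hugoniot conditions). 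In the supersonic case, $\vv_2(P_1) = O_2 - P_1$ is radial to $\partial B_{c_2}(O_2)$, so $(\vv_2\cdot\bt)(P_1) = 0$ would force $S_1$ to be tangent to the sonic circle at $P_1$ and thus $P_1 = P_0$, contradicting $|\vv_2(P_0)| > c_2$. In the subsonic case, $\vv_2(P_0) = O_2 - P_0$ is parallel to $\Gw$ (since both $O_2$ and $P_0$ lie on $\Gw$ through the origin); if $(\vv_2\cdot\bt)(P_0) = 0$, then $\bt\perp\Gw$, and by continuity of the tangential velocity across $S_1$ one obtains $\vv_1(P_0)$ parallel to $\Gw$, which a short computation using $\vv_1(\xxi) = (u_1^{(1)},0)-\xxi$ with $u_1^{(1)} > 0$ and $\theta_{\rm w}\in(0,\pi/2)$ excludes (it forces $u_1^{(1)}\sin\theta_{\rm w}=0$). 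With \eqref{RiemannProblSolutStruct-1-i7} verified, Lemma \ref{PtWithNonzeroTangVelocCurv-lemma} supplies the required $\hat P$, and Theorem \ref{lower-reg-Th} yields $\vv\notin H^1(\Omega)$.
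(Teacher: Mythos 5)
Your overall route is sound and is essentially the paper's method, just applied directly: the paper proves this symmetric statement by extending the solution evenly across the symmetry line (Remark \ref{4.7a}) and invoking the non-symmetric Theorem \ref{lower-reg-Th-RegRefl}, whose proof is precisely the verification of Definition \ref{RiemannProblSolutStruct-1}, Lemma \ref{PtWithNonzeroTangVelocCurv-lemma}, and Theorem \ref{lower-reg-Th} that you carry out; you instead verify these directly on the half-domain, treating $\Gsy$ as a second straight slip segment $\GammaExt_2$ meeting $\Gw$ at $\PZer=P_3=(0,0)$. This is legitimate, since the symmetric problem is posed on the sector $\{\theta_{\rm w}<\theta<\pi\}$ with the slip condition on both rays (so $\Gsy\subset\partial\Lambda$ and the interior angle at $\PZer$ is $\pi-\theta_{\rm w}\in(0,2\pi)$), and it has the advantage of not requiring one to check that the reflected/extended solution inherits the regularity hypotheses of Theorem \ref{lower-reg-Th-RegRefl} across the symmetry axis.

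However, two steps in your verification of hypothesis \eqref{PtWithNonzeroTangVelocCurv-lemma-i3} of Lemma \ref{PtWithNonzeroTangVelocCurv-lemma} do not hold as written. First, in the supersonic case, from $(\vv_2\cdot\bt_{S_1})(P_1)=0$ you conclude that $S_1$ is tangent to the sonic circle at $P_1$ ``and thus $P_1=P_0$''; tangency does not imply $P_1=P_0$ — it only says $\dist(O_2,S_1)=c_2$, which is perfectly compatible with $|P_0-O_2|>c_2$ — so no contradiction is reached this way. The correct short argument (used in Step 3 of the paper's proof of Theorem \ref{lower-reg-Th-RegRefl}) is that tangential vanishing forces $|\vv_2(P_1)\cdot\nnu_{S_1}|=|\vv_2(P_1)|=c_2$, contradicting the strict inequality $\vv^+\cdot\nnu<c^+$ in the entropy condition \eqref{shock-entropy-RHp} along $S_1$. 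Second, in the subsonic case, from $(\vv_2\cdot\bt)(P_0)=0$ together with $\vv_2(P_0)\parallel\Gw$ you infer $\bt\perp\Gw$; this overlooks the branch $\vv_2(P_0)=\mathbf{0}$ (i.e.\ $O_2=P_0$), for which your parallelism argument gives nothing. That branch must be excluded separately, e.g.\ because $\vv_2\cdot\nnu\ne 0$ on the shock $S_1$ by \eqref{shock-RHp}, or, as in the paper, because $\vv_2(P_0)=\mathbf{0}$ forces, via the Rankine--Hugoniot conditions, $\vv_1(P_0)=\mathbf{0}$, i.e.\ $O_1=P_0$, which is impossible since $O_1=(u_1^{(1)},0)$ with $u_1^{(1)}>0$ while $P_0$ lies on the wedge ray off the $\xi_1$--axis. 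With these two local repairs (both available in the paper's Step 3), your argument goes through.
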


The proof of Theorem \ref{lower-reg-Th-RegRefl-a} will be given for the more general non-symmetric case below (Theorem \ref{lower-reg-Th-RegRefl}),
based on the observation in Remark \ref{4.7a}.

\bigskip
Next, for the non-symmetric regular shock reflection problem,
based on the argument near the reflection point $P_0$ for the symmetric case above,
there are four configurations depending on whether the solution at $P_0$ and $P_1$ is subsonic or supersonic.
As two examples of them, Fig. \ref{figure:shock refelction nonsym} and Fig. \ref{figure:shock refelction-subs nonsym}
exhibit the structures of two supersonic regular reflections and two subsonic regular reflections, respectively.
In light of Definition \ref{regReflSolDef}, we define the regular reflection solution for the non-symmetric case.
Let the wedge angle $\theta_{\mathrm{w}}=\theta_{\mathrm{w}}^1+\theta_{\mathrm{w}}^2$
with $\theta_{\mathrm{w}}^i\in(\theta_{\mathrm{w}}^{\mathrm{d}},\frac{\pi}{2})$,
where $\theta_{\mathrm{w}}^i$ is the angle between $\Gamma_{\mathrm{w}}^i$ and the $\xi_1$--axis for $i=1,2$. Let
$$
\Lambda=\bR^2_+\setminus\big\{\boldsymbol{\xi}\; : \; \xi_ 1>0,\; -\xi_1\tan\theta_{\rm w}^2<\xi_2<\xi_1\tan\theta_{\rm w}^1\big\}.
$$
Let $\xi_1^0>0$ be the location of the incident shock $S_0$ on the self-similar plane.

\begin{figure}[htp]
\begin{center}
	\begin{minipage}{0.52\textwidth}
		\centering
		\includegraphics[width=0.6\textwidth]{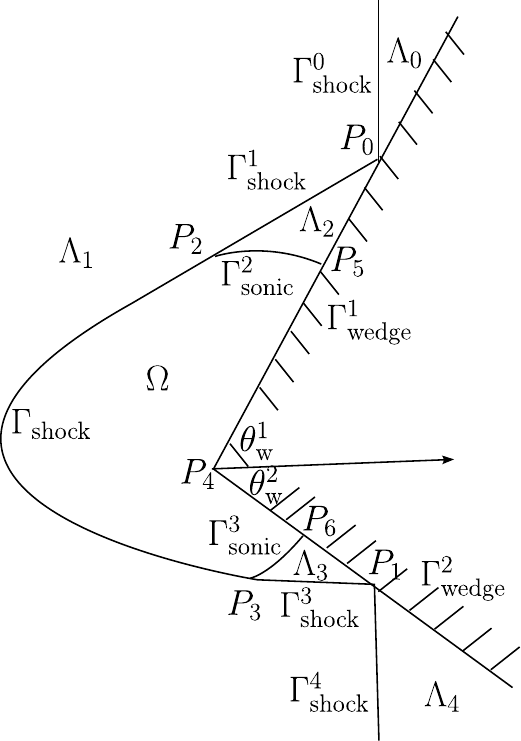}
\caption{Non-symmetric supersonic regular reflection}
\label{figure:shock refelction nonsym}
		\end{minipage}
	\hspace{-0.55in}
	\begin{minipage}{0.53\textwidth}
		\centering
		\includegraphics[width=0.6\textwidth]{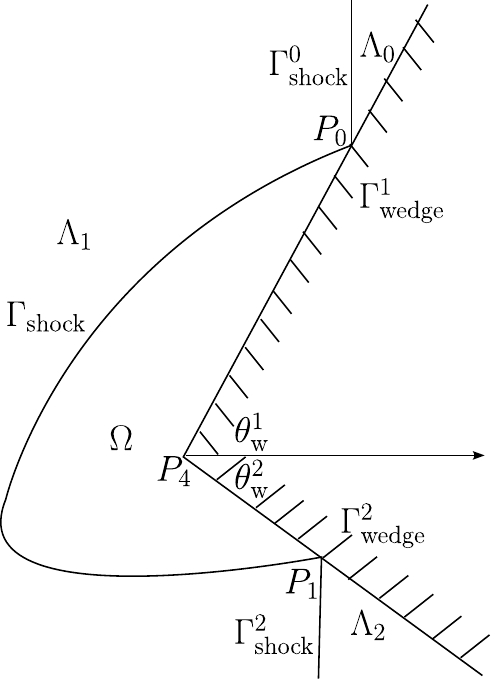}
\caption{Non-symmetric subsonic regular reflection}
\label{figure:shock refelction-subs nonsym}
			\end{minipage}
\end{center}
\end{figure}

We call $(\rho,\vv)\in L^{\infty}(\Lambda)$, with
$\rho\in BV_{\rm loc}(\Lambda\cap {\mathcal N}_r(\partial\Lambda))$ for some $r>0$,
is an entropy solution of the non-symmetric regular shock reflection problem
if $(\rho, \vv)$ is an entropy
solution of system
\eqref{selfSimEuler}--\eqref{selfSimEuler-moment}
with the slip boundary condition \eqref{Euler-slipBC-def}
in the sense of Definition \ref{weakSolEulerSlipBC}, which satisfies
the asymptotic conditions{\rm :}
$$
\displaystyle
\lim_{R\to\infty}\|(\rho, \vv)-(\bar{\rho},\bar{\vv})\|_{0, \Lambda\setminus
B_R(0)}=0,
$$
where
\begin{equation*}
(\bar{\rho},\bar{\vv})=
\begin{cases} (\rho_0, \vv_0) \qquad\mbox{for $\xi_1>\xi_1^0$},\\[1mm]
             (\rho_1, \vv_1) \qquad \mbox{for $\xi_1<\xi_1^0$}.
\end{cases}
\end{equation*}
Then we can define the non-symmetric regular shock reflection solution.

\begin{definition}\label{def-shock-reflection-nonsym}
For given angles $\theta_{\mathrm{w}}^i$ with $\theta_{\mathrm{w}}^i\in(\theta_{\mathrm{w}}^{\mathrm{d}},\frac{\pi}{2})$ for $i=1,2$,
an entropy solution $(\rho,\mathbf{v})$ is called a non-symmetric regular shock reflection solution of
the non-symmetric regular shock reflection problem
if  $(\rho,\mathbf{v})$ satisfies the following additional properties{\rm :}
\begin{enumerate}[\rm (i)]
\item \label{nonregReflSolDef-i0} If state $(2)$ at $P_0$ and state $(3)$ at $P_1$ are both supersonic, i.e.,
$|\vv_2(P_0)|>c_2$ and $|\vv_2(P_1)|>c_3$, the solution has the supersonic reflection structure
as in {\rm Fig. \ref{figure:shock refelction nonsym}} at both points $P_0$ and $P_1$.
If
state $(2)$ at $P_0$ and state $(3)$ at $P_1$ are both subsonic or sonic, i.e.,
$|\vv_2(P_0)|\le c_2$ and  $|\vv_2(P_1)|\le c_3$,
the solution has the subsonic reflection structure as in {\rm Fig. \ref{figure:shock refelction-subs nonsym}}.

\smallskip
\item \label{nonregReflSolDef-i1} The reflected-diffracted shock curve $P_0P_1$ is $C^1$ up to its endpoints.

\smallskip
\item \label{nonregReflSolDef-i2}
$(\rho, \vv)$ is continuous in $\overline\Omega\cap {\mathcal N}_r(\Shock\cup\Sonic^2\cup\Sonic^3)$ for the supersonic reflections,
and in $\overline\Omega\cap {\mathcal N}_r(\Shock)$ for the subsonic reflection, for some $r>0$.

\smallskip
\item \label{nonregReflSolDef-i3} The solution coincides with states $(0)$, $(1)$, $(2)$, and $(3)$ in their respective regions{\rm :}
\begin{equation*}
(\rho, \vv)=
\begin{cases} (\rho_0, \vv_0) \qquad\mbox{for}\,\,\,
                         \xi_1>\xi_1^0,\,\,\, \xi_2>\xi_1 \tan\theta_{\rm w}^1,\\[1mm]
            (\rho_0, \vv_0) \qquad\mbox{for}\,\,\,
                         \xi_1>\xi_1^0,\,\,\, \xi_2<-\xi_1 \tan\theta_{\rm w}^2,\\[1mm]
              (\rho_1, \vv_1) \qquad \mbox{for}\,\,\,
                          \xi_1<\xi_1^0, \,\,\,
                          \mbox{and the left to curve }\;P_0P_1, \\[1mm]
              (\rho_2, \vv_2) \qquad \mbox{in}\,\,\,\Lambda_2\,\,\, \mbox{if state $(2)$ at $P_0$ is supersonic},\\[1mm]
         (\rho_3, \vv_3) \qquad \mbox{in}\,\,\,\Lambda_3\,\,\, \mbox{if state $(3)$ at $P_1$ is supersonic},
\end{cases}
\end{equation*}
where $\xi_1^0>0$ is the location of the incident shock $S_0$ on the self-similar plane, and
$$
\displaystyle \lim_{\xxi\in\Omega,\;\xxi\to P_0}(\rho, \vv)(\xxi)=(\rho_2, \vv_2)(P_0)\qquad\mbox{if state $(2)$ at $P_0$ is subsonic or sonic},
$$
$$
\displaystyle \lim_{\xxi\in\Omega,\;\xxi\to P_1}(\rho, \vv)(\xxi)=(\rho_3, \vv_3)(P_1)\qquad\mbox{if state $(3)$ at $P_1$ is subsonic or sonic}.
$$

\item \label{nonregReflSolDef-i4}
\eqref{velocityDirectionOnShock} holds for some $C>0$, where $\Sonic$ is replaced by $\Sonic^2\cup\Sonic^3$,
$\vv$ on the curves is taken from the $\Omega$--side, and $\nnu$ is the outer normal with respect to $\Omega$.

\item \label{nonregReflSolDef-i5}
The flow is pseudo-subsonic in $\Omega$ on and near  $\Shock$, except for the sonic points $P_2$ and $P_3$ if they exist,
point $P_0$ if state $(2)$ is sonic at $P_0$,
and point $P_1$ if state $(3)$ is sonic at $P_1$.
\end{enumerate}
\end{definition}

\begin{remark}\label{4.7a}
Similarly as stated in {\rm Remark \ref{shockNotStraight}}, curve $\Shock$ cannot be a straight segment, which will be
addressed in more detail in {\rm Step 3} of the proof of {\rm Theorem \ref {lower-reg-Th-RegRefl}} below.
Moreover, if $\theta^1_{\mathrm{w}}=\theta^2_{\mathrm{w}}$, the symmetric regular shock reflection problem is a special case of
the non-symmetric regular shock reflection problem,
by defining $(\rho^{\rm ext}, \vv^{\rm ext})(\xi_1, -\xi_2)):=(\rho^{\rm ext}, \vv^{\rm ext})(\xi_1, \xi_2)$ for any $\xxi=(\xi_1,\xi_2)\in\Lambda$.
Therefore, we only prove the low regularity of the non-symmetric regular reflection solution below,
since the low regularity for the symmetric case follows directly.
\end{remark}

\begin{remark}\label{4.8a}
For the supersonic reflections, it follows from conditions {\rm (ii)--(iv)} of {\rm Definition \ref{def-shock-reflection-nonsym}}
and the Rankine-Hugoniot conditions that $(\rho, \vv)$ is continuous across the sonic arcs $\Sonic^2\cup\Sonic^3${\rm :}
$$
 \lim_{\xxi\in\Omega,\;\xxi\to P}(\rho, \vv)(\xxi)=(\rho_2, \vv_i)(P)\qquad\mbox{for any }\;P\in\overline{\Sonic^i}\quad\mbox{if }\;
 |\vv_i(P_0)|>c_i\;\; \mbox{for }\; i=2,3.
$$
\end{remark}


\begin{theorem}\label{lower-reg-Th-RegRefl}
Let $(\rho,\vv)$ be a non-symmetric regular shock reflection solution in the sense of {\rm Definition \ref{def-shock-reflection-nonsym}}.
Assume that $(\rho,\vv)$ satisfies the following{\rm :}
\begin{enumerate}[\rm (i)]
\item\label{lower-reg-Th-RegRefl-i1}
The reflected-diffracted shock $\Shock=P_2P_3$ is $C^2$ in its relative interior, and $C^{1}$ up to endpoints $P_2$ and $P_3$, where $P_2$ {\rm (}or $P_3${\rm )}
is replaced by $P_0$ {\rm (}or $P_1${\rm )} if the subsonic shock reflection occurs at $P_0$ {\rm (}or $P_1${\rm )}{\rm ;}
\item\label{lower-reg-Th-RegRefl-i2}
{\small $(\rho, \vv)\in C^1\left((\Nbhd_\sigma(\Shock\cup\Sonic^1\cup\Sonic^2)\cap\overline\Omega)\setminus\{P_2,\,P_3\}\right)\cap C^{0,1}(\Nbhd_\sigma(\Shock\cup\Sonic^1\cup\Sonic^2)\cap\overline\Omega)$}
for some $\sigma>0$, where $P_2$ {\rm (}or $P_3${\rm )} is replaced by $P_0$ {\rm (}or $P_1${\rm )} if the subsonic shock reflection occurs at $P_0$ {\rm (}or $P_1${\rm )}{\rm ;}
\item\label{lower-reg-Th-RegRefl-i3}
$|\vv|\le C_0 $ and $C_0^{-1}\le \rho\le C_0$ in $\Omega$ for some $C_0\ge 1$.
\end{enumerate}
Then $\vv\notin H^1(\Omega)$.
\end{theorem}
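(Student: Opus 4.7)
The plan is to deduce Theorem \ref{lower-reg-Th-RegRefl} as a direct consequence of the main theorem, Theorem \ref{lower-reg-Th}, by showing that a non-symmetric regular shock reflection solution $(\rho,\vv)$ fits the abstract framework of Definition \ref{RiemannProblSolutStruct-1}. The bounded subsonic set $\Omega$ and the constant-state regions are already furnished by Definition \ref{def-shock-reflection-nonsym}: take $\Lambda_0,\Lambda_1$ as the regions of states $(0)$ and $(1)$, and adjoin $\Lambda_2$ (resp. $\Lambda_3$) whenever state $(2)$ at $P_0$ (resp. state $(3)$ at $P_1$) is supersonic. Declare $\GammaInt_1:=\Shock$, with the remaining components of $\GammaInt$ being the sonic arcs $\Sonic^2,\Sonic^3$ whenever they are present, while $\GammaExt$ consists of the two wedge segments meeting at the vertex $\PZer$.

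Items (i)--(vi) of Definition \ref{RiemannProblSolutStruct-1} follow immediately from the geometric structure described in Definition \ref{def-shock-reflection-nonsym}, the $C^1$-regularity of the reflected-diffracted shock (hypothesis (i) of the present theorem), the $C^\infty$ character of the sonic arcs as arcs of circles, and the regularity assumption (ii) of the present theorem. Item (vii) of Definition \ref{RiemannProblSolutStruct-1} -- the inequalities $\vv\cdot\nnu\le -C^{-1}$ on $\GammaInt_1$ and $\vv\cdot\nnu\le 0$ on the sonic arcs -- is exactly condition (iv) of Definition \ref{def-shock-reflection-nonsym}; on $\Sonic^2$ and $\Sonic^3$ one uses Remark \ref{4.8a} to identify the $\Omega$-side trace of $\vv$ with $\vv_2$ or $\vv_3$, and then observes that these pseudo-velocities point radially toward the respective sonic-circle centers $O_2,O_3$ which lie on the wedge sides, producing the required nonpositive normal component.

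The crux is item (viii) of Definition \ref{RiemannProblSolutStruct-1}, which I plan to obtain from Lemma \ref{PtWithNonzeroTangVelocCurv-lemma} applied with $\GammaInt_1=\Shock$ and $j=1$. Hypothesis (a) is immediate since $\Shock=\partial\Omega\cap\partial\Lambda_1$. For hypothesis (b), at each endpoint $P$ of $\Shock$ the tangent direction is that of the straight Rankine-Hugoniot shock between state $(1)$ and the adjacent reflected state at $P$ (namely $S_1$ at the $P_0$-end and $S_2$ at the $P_1$-end); in the genuinely non-symmetric case, states $(2)$ and $(3)$ differ, hence $S_1$ and $S_2$ are transversal, so the two endpoints of $\Shock$ carry distinct tangent directions, forcing $\Shock$ not to be a straight segment. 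The symmetric sub-case, accessible through Remark \ref{4.7a}, is handled exactly by the argument of Remark \ref{shockNotStraight}: the $C^1$-extension across $\Symm$ forces a vertical tangent at the symmetry-axis endpoint, whereas by \eqref{S1noteVertical} the tangent at the opposite endpoint (along $S_1$) is never vertical. For hypothesis (c), select an endpoint $P$ whose tangent $\bt$ lies along $S_1$; by the Rankine-Hugoniot jump $[\vv\cdot\bt]=0$, the $\Omega$-side tangential velocity there equals $\vv_1(P)\cdot\bt=((u_1^{(1)},0)-P)\cdot\bt$, and the geometric position of $P$ combined with \eqref{S1noteVertical} and $u_1^{(1)}>0$ rules out perpendicularity of $\vv_1(P)$ to $S_1$.

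With Definition \ref{RiemannProblSolutStruct-1} fully verified, assumptions (i)--(iii) of Theorem \ref{lower-reg-Th} are supplied by hypotheses (ii)--(iii) of the present theorem and by condition (v) of Definition \ref{def-shock-reflection-nonsym} (pseudo-subsonicity on $\Shock$ from the $\Omega$-side). Applying Theorem \ref{lower-reg-Th} then yields $\vv\notin H^1(\Omega)$. The principal obstacle is the verification of hypotheses (b)--(c) of Lemma \ref{PtWithNonzeroTangVelocCurv-lemma}, which requires a case analysis over the four configurations (supersonic/subsonic at each of $P_0,P_1$, and the symmetric degenerate case) together with a careful use of the Rankine-Hugoniot conditions to transfer information about $\vv_1$ at the relevant endpoint into tangential-velocity information for $\vv$ from the $\Omega$-side.
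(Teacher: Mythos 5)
Your overall route coincides with the paper's: verify Definition \ref{RiemannProblSolutStruct-1} for the non-symmetric reflection solution, obtain its last condition via Lemma \ref{PtWithNonzeroTangVelocCurv-lemma}, and invoke Theorem \ref{lower-reg-Th}; the identification of the regions, of $\GammaExt$ and $\GammaInt$, and the verification of hypothesis (a) of the lemma and of the normal-velocity conditions on $\Shock$, $\Sonic^2$, $\Sonic^3$ all match the paper. The problem is that your sketch has genuine gaps exactly at the two points you yourself flag as the crux, namely hypotheses (b) and (c) of Lemma \ref{PtWithNonzeroTangVelocCurv-lemma}. For (b), you argue that if $\Shock$ were straight its endpoint tangents would lie along $S_1$ and $S_2$, and that ``states $(2)$ and $(3)$ differ, hence $S_1$ and $S_2$ are transversal.'' That implication is unproven and, as stated, invalid: across a straight shock the Rankine--Hugoniot conditions only force $\uu_i-\uu_1$ to be parallel to the normal, so two distinct downstream states of state $(1)$ can perfectly well sit across parallel shock lines; distinctness of $(2)$ and $(3)$ does not give non-parallel directions. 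Moreover, straightness of $\Shock$ actually forces the stronger fact that $S_1$ and $S_2$ coincide as lines (both contain the segment), so what must be excluded is $S_1=S_2$, which your transversality claim does not address. The paper avoids this entirely: if $\Shock$ were straight, then $S_1=S_2$ is the line through $P_0$ and $P_1$ and $\Shock\subset P_0P_1$; since $\theta_{\rm w}^1,\theta_{\rm w}^2\in(0,\frac\pi2)$ the wedge is convex, so the chord $P_0P_1$ lies inside the wedge, hence outside $\Lambda$ --- a contradiction. You would need either this convexity argument or an actual proof that $S_1\ne S_2$ (e.g., uniqueness of the downstream state across a fixed line together with the two slip conditions), neither of which appears in your sketch; the symmetric fallback via Remark \ref{shockNotStraight} does not cover the non-symmetric case.

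For (c), your justification --- that \eqref{S1noteVertical}, $u_1^{(1)}>0$ and ``the geometric position of $P$'' rule out $\vv_1(P)\perp S_1$ --- is an assertion, not an argument: $\vv_1(P)\perp S_1$ holds precisely when $P$ is the foot of the perpendicular from $O_1=(u_1^{(1)},0)$ to $S_1$, and nothing you cite excludes the relevant endpoint from being that foot. The actual mechanisms are case-specific. For a supersonic (or sonic) reflection one uses that $P_2$ lies on the sonic circle of state $(2)$: if $(\vv\cdot\bt)(P_2)=0$, then $|\vv\cdot\nnu|=|\vv_2(P_2)|=c_2$ at the shock, violating the entropy inequality $\vv^+\cdot\nnu<c^+$ in \eqref{shock-entropy-RHp}. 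For a subsonic reflection one uses the slip condition satisfied by state $(2)$ on the wedge: $(\vv\cdot\bt)(P_0)=0$ forces the line through $O_1$ and $O_2$ to meet $S_1$ at $P_0$, hence $O_2=P_0$, so $\vv_2(P_0)=\vv_1(P_0)=\mathbf{0}$ and $O_1=P_0$, contradicting $u_1^{(1)}>0$ with $P_0$ on the wedge side. Since you explicitly defer this case analysis, the proposal is incomplete precisely where the proof has its content; the surrounding reduction to Theorem \ref{lower-reg-Th} is correct and identical to the paper's.
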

\begin{proof}
It suffices to check that the assumptions of Theorem \ref{lower-reg-Th} are satisfied.
We divide the proof into four steps:

\smallskip
1. Since the cases with one supersonic reflection and one subsonic reflection can be treated similarly,
we focus only on the non-symmetric supersonic reflection solution (see Fig. \ref{figure:shock refelction nonsym})
and the non-symmetric subsonic reflection solution (see Fig. \ref{figure:shock refelction-subs nonsym}) in the proof below.

\smallskip
2. We first show that the non-symmetric regular reflection solutions satisfy
conditions  \eqref{RiemannProblSolutStruct-1-i1}--\eqref{RiemannProblSolutStruct-1-i6} of  Definition \ref{RiemannProblSolutStruct-1}
and assumptions \eqref{lower-reg-Th-i2}--\eqref{lower-reg-Th-i4} of Theorem \ref{lower-reg-Th}.

The properties described in
Definition \ref{RiemannProblSolutStruct-1}\eqref{RiemannProblSolutStruct-1-i1} with $M=5$
for the non-symmetric supersonic reflection solutions
and with $M=3$ for the non-symmetric subsonic reflection solutions
by Definition \ref{def-shock-reflection-nonsym}\eqref{nonregReflSolDef-i3},
and with $\Lambda_j$ being the region of possible
state $(0)$, $(1)$, $(2)$, or $(3)$.
Moreover, condition \eqref{RiemannProblSolutStruct-1-i2} of Definition \ref{RiemannProblSolutStruct-1} also holds
for the regular shock reflection solutions because states $(0)$, $(1)$, $(2)$, and $(3)$ are four different constant states,
specifically $\rho_0<\rho_1<\rho_2$ and $\rho_0<\rho_1<\rho_3$.

The properties in  Definition \ref{RiemannProblSolutStruct-1}\eqref{RiemannProblSolutStruct-1-i3} hold,
{\it i.e.}, $\partial\Omega$ is a Lipschitz curve,
by Definition \ref{def-shock-reflection-nonsym}\eqref{nonregReflSolDef-i1}
and the fact that the rest of $\partial\Omega$
consists of the straight segments and the arcs of the circles for the supersonic reflection,
and the angles of vertices $P_j, j=1,2,\cdots, 6$, for the non-symmetric supersonic reflection
and of vertices $P_0$, $P_1$, and $P_4$ for the non-symmetric subsonic reflection are all within $(0, \pi)$.

For the non-symmetric supersonic reflection, $\Nex=2$ and $\Nint=3$, with $\GammaExt_1=\Wedge^1=P_4P_5$,
  $\GammaExt_2=\Wedge^2=P_4P_6$,  $\GammaInt_1=\Shock=P_2P_3$,
  $\GammaInt_2=\Sonic^2=P_2P_5$, and $\GammaInt_3=\Sonic^3=P_3P_6$.

For the non-symmetric subsonic reflection, $\Nex=2$ and $\Nint=1$, with $\GammaExt_1=\Wedge^1=P_0P_4$,
  $\GammaExt_2=\Wedge^2=P_1P_4$, and  $\GammaInt_1=\Shock=P_0P_1$.

In both the non-symmetric supersonic and subsonic reflection cases, all the requirements of
Definition \ref{RiemannProblSolutStruct-1}\eqref{RiemannProblSolutStruct-1-i4}--\eqref{RiemannProblSolutStruct-1-i5} hold,
by the regularity of $\Shock$ given in  Definition \ref{def-shock-reflection-nonsym}\eqref{nonregReflSolDef-i1}
and the facts that the angles in the corner points of $\Omega$
are within $(0, \pi)$ for the non-symmetric regular reflection except for point $P_4$,
where condition \eqref{domainLambda} holds with $\theta^-=\thetaw^1$ and $\theta^+=2\pi-\thetaw^2$.
Also, point $P_4=(0,0)$ for the non-symmetric regular reflection solution is point
$\PZer=(0, 0)$ as described in Definition \ref{RiemannProblSolutStruct-1}\eqref{RiemannProblSolutStruct-1-i4}.
Moreover, $P_4$ is the common point of
$\GammaExt_1=\Wedge^1$ and
$\GammaExt_2=\Wedge^2$ for both the regular supersonic and subsonic reflections,
as described in Definition \ref{RiemannProblSolutStruct-1}\eqref{RiemannProblSolutStruct-1-i4}.

Now we show that the requirements of
Definition \ref{RiemannProblSolutStruct-1}\eqref{RiemannProblSolutStruct-1-i6} hold.
Clearly, $\GammaInt_1=\Shock$ is a shock,
and $\vv \cdot \nnu\le -C^{-1}$
where $\vv$ on $\GammaInt_{1}$ is taken from the $\Omega$--side and
$\nnu$ is the outer normal with respect to $\Omega$ by property \eqref{nonregReflSolDef-i4} of
Definition \ref{def-shock-reflection-nonsym} of the non-symmetric regular reflection solution.
Also, for the non-symmetric supersonic reflection,
$\GammaInt_2=\Sonic^2$ and $\GammaInt_3=\Sonic^3$ are an arc of the sonic circles of state $(2)$ and state $(3)$, respectively,
which imply that
$\vv_2\cdot\nnu=-|\vv_2|=-c_2<0$ on $\Sonic^2$ and $\vv_3\cdot\nnu=-|\vv_3|=-c_3<0$ on $\Sonic^3$.
Using the boundary conditions that $\vv=\vv_2$ on $\Sonic^2$ and $\vv=\vv_3$ on $\Sonic^3$ by
Definition \ref{def-shock-reflection-nonsym}\eqref{nonregReflSolDef-i3},
we obtain that
$$
\vv\cdot\nnu=-c_2<0\,\,\,\, \mbox{ on $\Sonic^2$}, \qquad\,\,\,\, \vv\cdot\nnu=-c_3<0\,\,\,\, \mbox{ on $\Sonic^3$}.
$$
Thus, the requirements of   Definition \ref{RiemannProblSolutStruct-1}\eqref{RiemannProblSolutStruct-1-i6} hold.

Next, since $\overline{\GammaInt}=\overline{\Shock}\cup\overline{\Sonic^1}\cup\overline{\Sonic^2}$,
assumptions \eqref{lower-reg-Th-RegRefl-i2}--\eqref{lower-reg-Th-RegRefl-i3}
of Theorem \ref{lower-reg-Th-RegRefl} imply that
assumptions \eqref{lower-reg-Th-i2}--\eqref{lower-reg-Th-i3} of Theorem \ref{lower-reg-Th} hold
for the regular shock reflection solution.

Therefore, we have shown that conditions  \eqref{RiemannProblSolutStruct-1-i1}--\eqref{RiemannProblSolutStruct-1-i6}
of Definition \ref{RiemannProblSolutStruct-1} and assumptions \eqref{lower-reg-Th-i2}--\eqref{lower-reg-Th-i3}
of Theorem \ref{lower-reg-Th} hold.

\smallskip
3. It remains to show that the requirements in Definition \ref{RiemannProblSolutStruct-1}\eqref{RiemannProblSolutStruct-1-i7} hold.
This is achieved by the use of Lemma \ref{PtWithNonzeroTangVelocCurv-lemma}.
Thus, it remains to show that conditions \eqref{PtWithNonzeroTangVelocCurv-lemma-i1}--\eqref{PtWithNonzeroTangVelocCurv-lemma-i3}
of Lemma \ref{PtWithNonzeroTangVelocCurv-lemma} are satisfied below.

Condition \eqref{PtWithNonzeroTangVelocCurv-lemma-i1} is satisfied since $\Lambda_1$ is
the region of state $(1)$ as shown in Figs. \ref{figure:shock refelction nonsym}--\ref{figure:shock refelction-subs nonsym} above,
and $\Shock=\partial\Lambda_1\cap\partial\Omega$ in both the supersonic and subsonic cases.

We now check condition \eqref{PtWithNonzeroTangVelocCurv-lemma-i2}  of Lemma \ref{PtWithNonzeroTangVelocCurv-lemma}.
Assume that $\Shock$ is a straight segment. Recall that $S_1=\Shock^1$ is the straight shock between
states $(1)$ and $(2)$, {\it i.e.}, the line passing through points $P_0$ and $P_2$
for the supersonic reflection, and through $P_0$ for the subsonic reflection.
Using  Definition \ref{def-shock-reflection-nonsym}\eqref{nonregReflSolDef-i3}, we obtain that
  $(\rho, \vv)(P_2)=(\rho_2,\vv_2)(P_2)$  for the non-symmetric supersonic reflection, and
  $(\rho, \vv)(P_0)=(\rho_2,\vv_2)(P_0)$  for the non-symmetric subsonic reflection,
where $(\rho, \vv)$ in both cases is computed from the $\Omega$--side.
Thus, the tangent line to $\GammaInt_1=\Shock$ at the upper endpoint, {\it i.e.}, at $P_2$
for the non-symmetric supersonic reflection and at $P_0$ for
the non-symmetric subsonic reflection, is line $S_1$.
Similarly, we have the tangent line to $\GammaInt_1=\Shock$ at the lower endpoint, {\it i.e.},
at $P_3$ for the non-symmetric supersonic reflection and at $P_1$ for the non-symmetric subsonic reflection,
is line $S_2=\Shock^2$.
Hence, if $\Shock$ is a straight segment, then $\Shock$ lies on both lines $S_1$ and $S_2$;
in particular, these lines coincide.
It follows that $S_1=S_2$ is the line passing through $P_0$ and $P_1$,
and $\Shock$ lies within interval $P_0P_1$.
However, the wedge is convex since   $\theta_{\mathrm{w}}^i \in (0, \frac\pi 2)$ for $i=1,2$,
so $P_0P_1$ lies within the wedge, thus outside $\Lambda$.
It follows that $\Shock\subset P_0P_1$ lies outside $\Lambda$, which contradicts the structure
of regular reflection-diffraction configuration. That is,
the assumption that $\Shock$ is a straight segment leads to a contradiction, which verifies
\eqref{PtWithNonzeroTangVelocCurv-lemma-i2}  of Lemma \ref{PtWithNonzeroTangVelocCurv-lemma}.

Next, we show condition \eqref{PtWithNonzeroTangVelocCurv-lemma-i3} of Lemma \ref{PtWithNonzeroTangVelocCurv-lemma} is satisfied.

Consider first the case of reflections that are supersonic at $P_0$, {\it i.e.}, $|\vv_2(P_0)|>c_2$;
see Fig. \ref{figure:shock refelction nonsym}.
Then we need to show that $(\vv\cdot\ttau_{\rm shock})(P_2)\ne 0$. Assume this is not true, then
$(\vv\cdot\ttau_{\rm shock})(P_2)= 0$. Note that $\ttau_{\rm shock}(P_2)=\ttau_{S_1}$
and $\vv_{|\Omega}(P_2)=\vv_2(P_2)$
by Definition \ref{def-shock-reflection-nonsym}(\ref{nonregReflSolDef-i1}, \ref{nonregReflSolDef-i3})
so that $\vv_2(P_2)\cdot\ttau_{S_1}=0$ and
$$
|\vv(P_2)\cdot\nnu_{\rm shock}|=|\vv_2(P_2)\cdot\nnu_{S_1}|=|\vv_2(P_2)|=c_2,
$$
which contradicts the last inequality in \eqref{shock-entropy-RHp}. Thus
condition \eqref{PtWithNonzeroTangVelocCurv-lemma-i3} of Lemma \ref{PtWithNonzeroTangVelocCurv-lemma} is proved in the case when $|\vv_2(P_0)|>c_2$.

In the case of reflections that are sonic at $P_0$, {\it i.e.}, $|\vv_2(P_0)|=c_2$ (see Fig. \ref{figure:shock refelction-subs nonsym}),
the argument is the same as above with only notational change:  we use point $P_0$ here instead of point $P_2$.

It remains to consider the case of reflections that are subsonic at $P_0$, {\it i.e.}, $|\vv_2(P_0)|<c_2$;
see Fig. \ref{figure:shock refelction-subs nonsym}.
Assume that $(\vv\cdot\bt_{\rm shock})(P_0)=0$.
Recall that $(\rho, \vv)(P_0)=(\rho_2, \vv_2)(P_0)$  for the subsonic reflection
and $S_1$ is tangent to $\Shock$ at $P_0$, as we have shown above.
Thus, from $(\vv\cdot\bt_{\rm shock})(P_0)=0$, we obtain that
$(\vv_2\cdot\bt_{S_1)}(P_0)=0$.
Since $S_1$ is the line shock between states $(1)$ and $(2)$,
the last equality implies by \eqref{shock-RHp} that $(\vv_1\cdot\bt_{S_1)}(P_0)=0$, so that
line $L$ through centers $O_1=(u_1^{(1)}, 0)$ and $O_2=(u_1^{(2)}, u_2^{(2)})$ of states $(1)$ and $(2)$
(which is orthogonal to $S_1$) intersects $S_1$ at $P_0$.
Since $P_0\in\Wedge$, $O_2\in L\cap \Wedge$
(where $O_2\in\Wedge$ because $\vv_2\cdot\nnu=0$ on $\Wedge$),
and $L\cap \Wedge=\{P_0\}$, it follows
that $O_2=P_0$ so that $\vv_2(P_0)=\bf{0}$.
Since $\vv_2(P_0)=\bf{0}$,
it follows from the Rankine-Hugoniot conditions \eqref{RH}--\eqref{shock-RHp} on $S_1$ between states $(1)$ and $(2)$
that $\vv_1(P_0)=\bf{0}$, that is, $O_1=P_0$.
However, this is not true since $O_1=(u_1^{(1)}, 0)$ for $u_1^{(1)}>0$, while
$P_0=(l\cos\theta_{\rm w}^1, l\sin\theta_{\rm w}^1)$ for some $l>0$. This contradiction shows that
$(\vv\cdot\bt_{\rm shock})(P_0)\ne 0$, {\it i.e.},
condition \eqref{PtWithNonzeroTangVelocCurv-lemma-i3}
of Lemma \ref{PtWithNonzeroTangVelocCurv-lemma} holds for the subsonic reflection at $P_0$.

Now all the conditions of Lemma \ref{PtWithNonzeroTangVelocCurv-lemma} are verified for
the regular reflection solutions. Applying Lemma \ref{PtWithNonzeroTangVelocCurv-lemma},
we obtain that  Definition \ref{RiemannProblSolutStruct-1}\eqref{RiemannProblSolutStruct-1-i7} holds.

\smallskip
4. Now, all the conditions of Theorem \ref{lower-reg-Th} are verified for the regular reflection solutions.
Then the conclusion of Theorem \ref{lower-reg-Th-RegRefl} follows from
 Theorem \ref{lower-reg-Th}.
\end{proof}

\subsection{Lower Regularity of the Prantl-Meyer Reflection Solutions for Supersonic Flows past a Solid Ramp}

The second example is the Prandtl reflection problem for the isentropic Euler system \eqref{isentropisEulersystem}.
This is of a self-similar structure that occurs when a $2$-D supersonic flow with density $\rho_{\infty}>0$ and velocity $\vv_{\infty}=(u_{\infty},0)$, $u_{\infty}>0$,
along the wedge-axis hits the wedge in the direction at $t=0$. See Figs. \ref{figure:Prandtl sup}--\ref{figure:Prandtl sub};
also see Bae-Chen-Feldman \cite{BCFQAM2013,BCF-2} and Elling-Liu \cite{ELCPAM2008}.

\begin{figure}[htp]
\begin{center}
	\begin{minipage}{0.52\textwidth}
		\centering
		\includegraphics[width=0.7\textwidth]{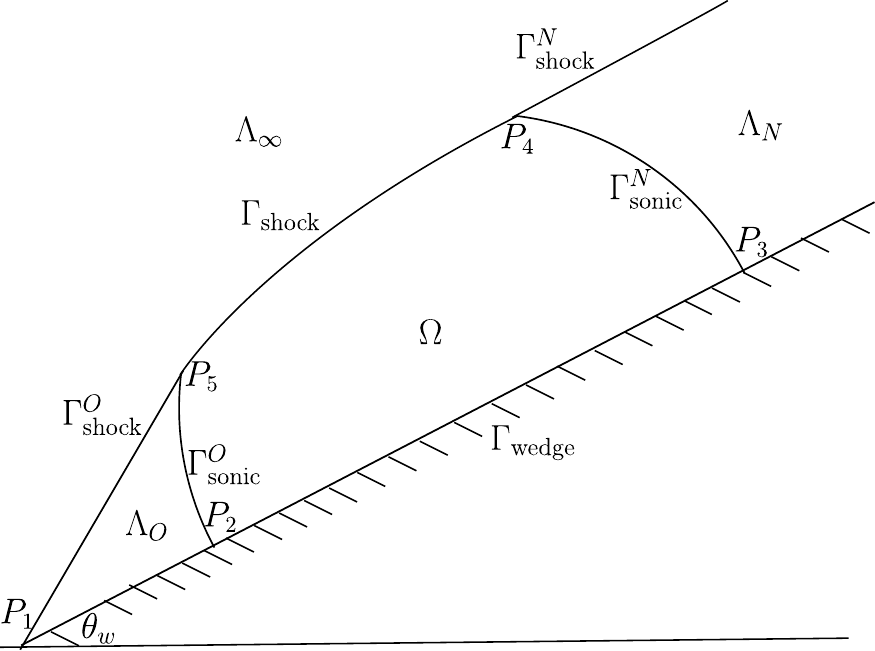}
\caption{Prandtl supersonic shock reflection}
\label{figure:Prandtl sup}
		\end{minipage}
	\hspace{-0.5in}
	\begin{minipage}{0.51\textwidth}
		\centering
		\includegraphics[width=0.7\textwidth]{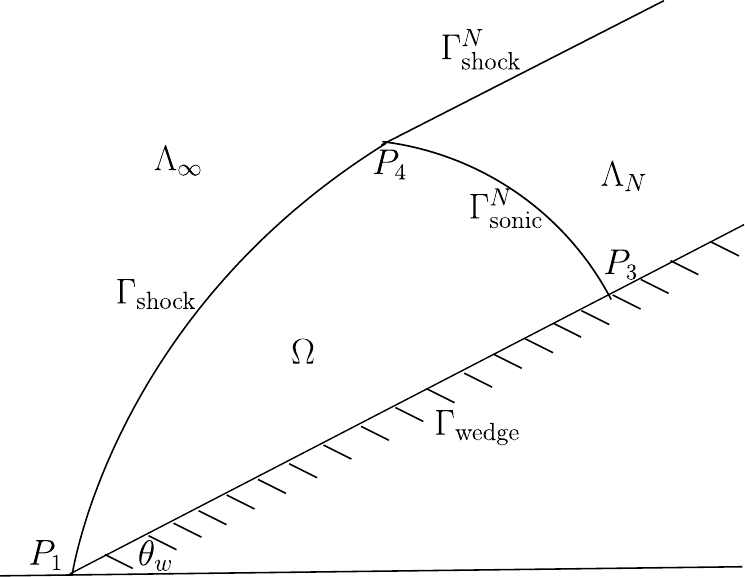}
\caption{Prandtl subsonic shock reflection}
\label{figure:Prandtl sub}
			\end{minipage}
\end{center}
\end{figure}

Consider the problem in the self-similar coordinates $\xxi=(\xi_1,\xi_2)$ in the region:
\[
\Lambda=\R^2_+\setminus\big\{\xxi:\,\xi_2>\max(0,\xi_1\tan\theta_{\rm w})\big\}.
\]
We seek global entropy solutions of the boundary value problem in the sense of Definition \ref{weakSolEulerSlipBC}.

First, a similar argument as made for the regular shock reflection at the reflection point $P_0$
(see Figs. \ref{figure:shock refelction}--\ref{figure:shock refelction-subs}) yields that, at wedge-vertex $P_1$,
for a given uniform incoming flow $(\rho_{\infty},\vv_{\infty})$, there is a detachment angle $\theta_{\rm w}^{\rm d}\in(0,\frac{\pi}{2})$
such that the system of algebraic equations \eqref{RH} and \eqref{Euler-slipBC-def} for state $(O)$ has two solutions
for each wedge-angle $\theta_{\rm w}\in(0,\theta_{\rm w}^{\rm d})$ such that the entropy condition is satisfied for the two-shock configuration.
The weak state $(O)$ with the smaller density is expected to be physical.
So we always refer to state $(O)$ as the weak state $(O)$.
Similarly, it follows from the Rankine-Hugoniot conditions \eqref{RH} and the slip boundary condition \eqref{Euler-slipBC-def} that
\begin{equation}
    \mbox{\it The shock line $\Shock^O$
    between states $(O)$ and $(\infty)$ is not parallel to $\Wedge$ for all $\theta_{\rm w}\in(0,\theta_{\rm w}^{\rm d})$.}
\end{equation}
Depending on the wedge-angle, state $(O)$ can be either supersonic or subsonic at $P_1$.
It determines the type of the reflection, supersonic or subsonic, as shown in Fig. \ref{figure:Prandtl sup} and Fig. \ref{figure:Prandtl sub}, respectively.

Second, by a straightforward computation, we know that there exists a unique constant state $(N)$,
which determines the normal reflection of state $(\infty)$ from the wedge boundary $\Wedge$ so that state $(N)$ satisfies the slip boundary
condition \eqref{Euler-slipBC-def} along $\Wedge$ and the Rankine-Hugoniot conditions \eqref{RH} along a straight line $\Shock^N$,
which lies in $\Lambda$ and is parallel to $\Wedge$.

Hinted by the solution structures given in \cite{BCFQAM2013,BCF-2,ELCPAM2008} for the potential flow,
for any given wedge-angle $\theta_{\rm w}\in(0,\theta_{\rm w}^{\rm d})$,
an entropy solution $(\rho,\vv)$ of the boundary value problem in the sense of Definition \ref{weakSolEulerSlipBC}
is called a regular Prandtl-Meyer reflection solution
for the isentropic Euler system \eqref{selfSimEuler}--\eqref{selfSimEuler-moment}
if $(\rho,\vv)$ satisfies the following further properties:
\begin{enumerate}[(i)]
\item If state $(O)$ at $P_1$ is supersonic, \emph{i.e.}, $|\vv_O(P_1)|>c_O$,
the solution has the supersonic reflection structure as in Fig. \ref{figure:Prandtl sup} at point $P_1$.
If state $(O)$ at $P_1$ is subsonic or sonic, \emph{i.e.}, $|\vv_O(P_1)|\le c_O$,
the solution has the subsonic reflection structure as in Fig. \ref{figure:Prandtl sub}.

\item The reflected shock curve $\Shock$ (\emph{i.e.}, $P_4P_5$ for the supersonic reflection and $P_1P_4$ for the subsonic reflection)
is $C^1$ up to its endpoints and is $C^2$ in its relative interior.

\item  $(\rho, \vv)$ is continuous in $\overline\Omega\cap {\mathcal N}_r(\Shock\cup\Sonic^N\cup\Sonic^O)$
when $|\vv_O(P_1)|>c_O$, and in $\overline\Omega\cap {\mathcal N}_r(\Shock\cup\Sonic^N)$
when $|\vv_O(P_1)|\le c_O$ for some $r>0$.

\item The solution coincides with states $(\infty)$, $(O)$, and $(N)$ in their respective regions.
Specifically,
\begin{equation*}
(\rho, \vv)=
\begin{cases} (\rho_{\infty}, \vv_{\infty}) \qquad\,\,&\mbox{in}\,\,\, \Lambda_{\infty},\\[0.5mm]
            (\rho_N, \vv_N) \qquad\,\,&\mbox{in}\,\,\,\Lambda_N,\\[0.5mm]
              (\rho_O, \vv_O) \qquad\,\, &\mbox{in}\,\,\,\Lambda_O\,\,\,\, \mbox{if state (O) at $P_1$ is supersonic},
\end{cases}
\end{equation*}
and
$$
\displaystyle \lim_{\xxi\in\Omega,\;\xxi\to P_1}(\rho, \vv)(\xxi)=(\rho_O, \vv_O)(P_1)\qquad\mbox{if state (O) at $P_1$ is subsonic or sonic}.
$$

\item $\vv\cdot\nu\leq -C^{-1}$ on $\overline{\Shock}\cup\overline{\Sonic^N}\cap\overline{\Sonic^O}$ for some $C>0$,
where $\vv$ on the curves is taken from the $\Omega$--side and $\nnu$ is the outer normal with respect to $\Omega$.

\item  The flow is pseudo-subsonic in $\Omega$ on and near $\Shock$, except for the sonic points $P_4$ and $P_5$ if they exist, or point $P_1$ if state $(O)$ is sonic at $P_1$.
\end{enumerate}

We remark that, for the supersonic reflections, it follows from {\rm  (ii)--(iv)} of the definition above and the Rankine-Hugoniot conditions that
$(\rho, \vv)$ is continuous across the sonic arcs $\Sonic^N$ and $\Sonic^O$.

\smallskip
Then, following the arguments as for the proof of Theorem \ref{lower-reg-Th-RegRefl},
we have the following theorem on the lower regularity of the regular Prandtl-Meyer shock reflection solutions:

\begin{theorem}\label{thm-4.11}
Let $(\rho,\vv)$ be a regular Prandtl-Meyer shock reflection solution for the isentropic Euler system
such that
\begin{enumerate}[\rm (i)]
\item $(\rho, \vv)\in C^1\big((\Nbhd_\sigma(\Shock\cup\Sonic^N\cup\Sonic^O)\cap\overline\Omega)\setminus\{P_4,\,P_5\}\big)
\cap C^{0,1}\big(\Nbhd_\sigma(\Shock\cup\Sonic^N\cup\Sonic^O)\cap\overline\Omega\big)$
for some $\sigma>0$, where $P_5$ is replaced by $P_1$ if the subsonic shock reflection occurs at $P_1${\rm ;}

\item
$|\vv|\le C_0 $ and $C_0^{-1}\le \rho\le C_0$ in $\Omega$ for some $C_0\ge 1$.
\end{enumerate}
Then $\vv\notin H^1(\Omega)$.
\end{theorem}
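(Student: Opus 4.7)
The plan is to reduce Theorem \ref{thm-4.11} to the main Theorem \ref{lower-reg-Th} by verifying all structural conditions of Definition \ref{RiemannProblSolutStruct-1} and the three analytic assumptions of Theorem \ref{lower-reg-Th}, following the template of the proof of Theorem \ref{lower-reg-Th-RegRefl}. Assumptions \eqref{lower-reg-Th-i2} and \eqref{lower-reg-Th-i3} of Theorem \ref{lower-reg-Th} come directly from hypotheses (i)--(ii) of Theorem \ref{thm-4.11}, while \eqref{lower-reg-Th-i4} (pseudo-subsonicity on $\Shock$) is built into the definition of a regular Prandtl-Meyer reflection solution.

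For the geometric setup of Definition \ref{RiemannProblSolutStruct-1}, I take $M=3$ in the supersonic case with uniform states $(\infty)$, $(N)$, $(O)$ filling their respective regions $\Lambda_\infty,\Lambda_N,\Lambda_O$, and $M=2$ in the subsonic/sonic case where $\Lambda_O$ disappears; the constituent states are mutually distinct, giving condition \eqref{RiemannProblSolutStruct-1-i2}. The external boundary $\GammaExt$ consists of straight segments of the horizontal ramp and of $\Wedge$, meeting at the wedge vertex with interior angle in $(0,2\pi)$, giving \eqref{RiemannProblSolutStruct-1-i4}. The internal boundary is $\GammaInt_1=\Shock$ together with the sonic arcs $\Sonic^N$ and, in the supersonic case, $\Sonic^O$, all $C^2$ in their interior and $C^1$ up to the endpoints by Theorem \ref{thm-4.11}(i) and the definition of the solution, yielding \eqref{RiemannProblSolutStruct-1-i5}, while the continuity of $\vv$ up to $\GammaInt$ from $\Omega$ gives \eqref{RiemannProblSolutStruct-1-i6-0}. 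For \eqref{RiemannProblSolutStruct-1-i6}, the bound $\vv\cdot\nnu\le -C^{-1}$ on $\Shock$ is postulated, and on each sonic arc $\vv$ equals the pseudo-velocity of the adjacent uniform state, which is radial toward the center of that sonic circle, so $\vv\cdot\nnu=-c_N<0$ on $\Sonic^N$ and $\vv\cdot\nnu=-c_O<0$ on $\Sonic^O$.

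The main remaining task is condition \eqref{RiemannProblSolutStruct-1-i7}, which I verify through Lemma \ref{PtWithNonzeroTangVelocCurv-lemma} with $\Lambda_j=\Lambda_\infty$. Condition (a) of the lemma holds because $\Shock=\partial\Omega\cap\partial\Lambda_\infty$. For condition (c) I take $\hat P$ to be the upper endpoint of $\Shock$, namely $P_4$ in the supersonic case or $P_1$ in the subsonic/sonic case; if $(\vv\cdot\bt)(\hat P)=0$, then in the supersonic/sonic case, continuity of $\vv$ across $\Sonic^O$ (or the limit $\displaystyle\lim_{\xxi\in\Omega,\xxi\to P_1}(\rho,\vv)(\xxi)=(\rho_O,\vv_O)(P_1)$ in the sonic case) together with the tangency of $\Shock$ to $\Shock^O$ at $\hat P$ forces $|\vv_O\cdot\nnu_{\Shock^O}|=c_O$, contradicting the strict inequality in \eqref{shock-entropy-RHp}; while in the strictly subsonic case the same identity forces the line through the centers $O_\infty=(u_\infty,0)$ and $O_O$ of states $(\infty)$ and $(O)$ (which is perpendicular to $\Shock^O$) to pass through $P_1\in\Wedge$, and since $O_O\in\Wedge$ as well, an elementary argument collapses $O_\infty=P_1=\veco$, contradicting $u_\infty>0$.

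The most delicate step is condition (b) of Lemma \ref{PtWithNonzeroTangVelocCurv-lemma}, namely that $\Shock$ is not a straight segment. If it were, its tangent line at the upper endpoint would coincide with the tangent to $\Shock^O$ (by continuity of $(\rho,\vv)$ across $\Sonic^O$ and the Rankine-Hugoniot conditions in the supersonic case, or by the above limit at $P_1$ together with the Rankine-Hugoniot conditions in the sonic/subsonic case), and its tangent at the lower endpoint would coincide with $\Shock^N$ (by continuity across $\Sonic^N$). But by the two geometric facts highlighted in this subsection, $\Shock^O$ is not parallel to $\Wedge$ while $\Shock^N$ is parallel to $\Wedge$ by construction of the normal reflection state $(N)$, so these two tangent directions differ and $\Shock$ cannot be straight. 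All hypotheses of Theorem \ref{lower-reg-Th} being in place, its conclusion $\vv\notin H^1(\Omega)$ applies.
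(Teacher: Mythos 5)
Your proposal is correct and takes essentially the same route as the paper: reduce to Theorem \ref{lower-reg-Th} by checking Definition \ref{RiemannProblSolutStruct-1} and invoking Lemma \ref{PtWithNonzeroTangVelocCurv-lemma}, where the non-straightness of $\Shock$ follows from its tangency to $\Shock^N$ and $\Shock^O$ at its two endpoints combined with $\Shock^N$ being parallel to $\Wedge$ while $\Shock^O$ is not (the paper phrases the same contradiction via $\Shock^N$ lying at positive distance from $\Wedge$ while $\Shock^O$ passes through $P_1\in\Wedge$), and the nonvanishing tangential velocity at a shock endpoint is argued exactly as in Theorem \ref{lower-reg-Th-RegRefl}, including the center-collapse argument in the strictly subsonic case. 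Only cosmetic slips: in the paper's labeling the shock endpoint adjacent to $\Sonic^O$ is $P_5$ (not $P_4$), and the paper takes $\Nex=1$ with $\GammaExt_1=\Wedge$ only (the subsonic region $\Omega$ does not reach the horizontal part of $\partial\Lambda$), neither of which affects the verification.
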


\begin{proof}
For the supersonic reflection, $M=3$, $\Nex=1$, and $\Nint=3$ with $\Lambda_1=\Lambda_N$, $\Lambda_2=\Lambda_{\infty}$, $\Lambda_3=\Lambda_O$,
$\GammaExt_1=\Wedge=P_2P_3$, $\GammaInt_1=\Shock=P_4P_5$,  $\GammaInt_2=\Sonic^N=P_3P_4$, and $\GammaInt_3=\Sonic^O=P_2P_5$.

For the subsonic reflection, $M=2$, $\Nex=1$, and $\Nint=2$ with $\Lambda_1=\Lambda_N$, $\Lambda_2=\Lambda_{\infty}$,
$\GammaExt_1=\Wedge=P_1P_3$, $\GammaInt_1=\Shock=P_1P_4$, and $\GammaInt_2=\Sonic^N=P_3P_4$.

Then we can follow the proof of Theorem \ref{lower-reg-Th-RegRefl} to show that the regular solutions satisfy
the assumptions of Theorem \ref{lower-reg-Th} similarly.
The only (slight) difference is in showing that
condition \eqref{PtWithNonzeroTangVelocCurv-lemma-i2}  of Lemma \ref{PtWithNonzeroTangVelocCurv-lemma} holds. We need to
show that $\Shock$ is not a straight segment.
Let us first consider the supersonic Prandtl reflection case; see Fig. \ref{figure:Prandtl sup}.
Arguing as in the proof of  Theorem \ref{lower-reg-Th-RegRefl}, we show that,
at point $P_4$ (resp. $P_5$), curve $\Shock$ is tangential to line $\Shock^N$
(resp. $\Shock^O$). If $\Shock$ is a straight segment, we obtain that
$\Shock^N$ and $\Shock^O$ lie in the same line. However, $\Shock^N$ is parallel to $\Wedge$ with a positive distance from it, while
$\Shock^O$ passes through point $P_1\in \Wedge$.
This contradiction shows that $\Shock$ is not a straight segment.
In the case of subsonic or sonic Prandtl reflection as in Fig. \ref{figure:Prandtl sub},
we argue similarly, except that we consider point $P_1$ instead of point $P_5$ and use the fact that, in the subsonic or sonic case, $\Shock$ is parallel to $\Shock^O$
at $P_1$ by the last equality in condition (iv) of the definition of admissible solutions.
Then we conclude in the same way as in the case of supersonic Prandtl reflection.
Thus, we do not repeat the similar arguments for the proof.
\end{proof}

\subsection{Lower Regularity of the Shock Diffraction Solutions of the Lighthill Problem}
The third problem is the Lighthill problem, \emph{i.e.}, the shock diffraction problem,
for the isentropic Euler system.
As discussed in \cite{ChenFeldmanHuXiang} for the potential flow and shown in Fig. \ref{figure:shock diffraction},
the Lighthill shock diffraction problem arises as a straight incident shock passes through a wedge stepping down.

\begin{figure}[!h]
  \centering
\centering
     \includegraphics[width=0.5\textwidth]{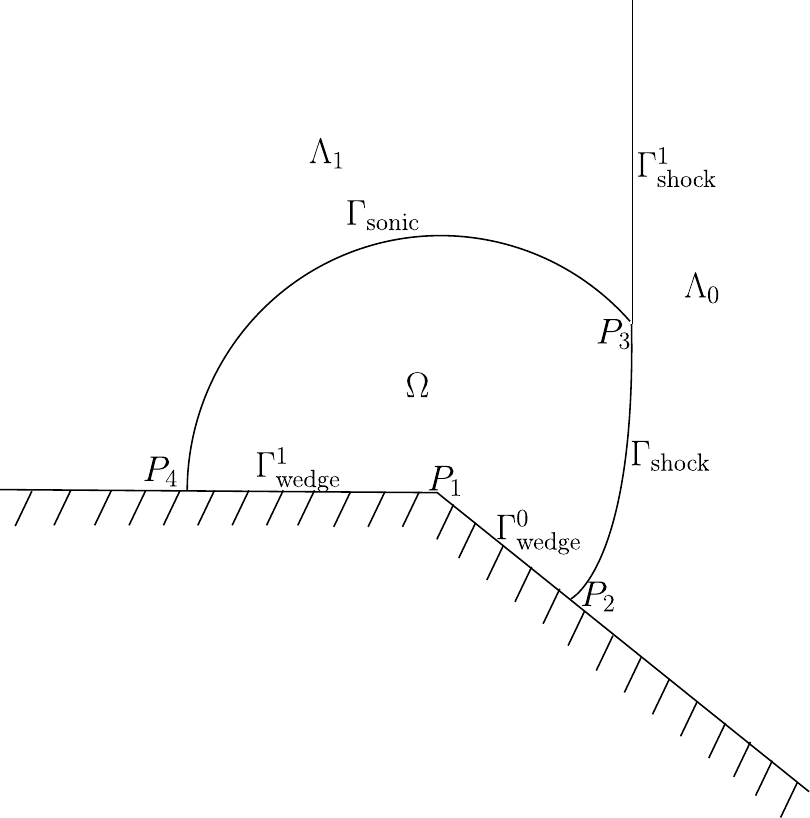}
  \caption{Lighthill shock diffraction problem}
  \label{figure:shock diffraction}
\end{figure}

Initially, we consider two piecewise constant Riemann data with the left state $(1)$: $(\rho_1,u_1^{(1)},0)$ for $u_1^{(1)}>0$
and the right state $(0)$: $(\rho_0,0,0)$, separated by a vertical shock,
which hits the wedge at the wedge-corner $P_1$.
Then the Lighthill shock diffraction problem evolves in a self-similar structure as time goes on.
In the self-similar coordinates $\xxi$, the incident shock $\Shock^1$ is given by $\xi_1=\xi_1^0$.
By a straightforward calculation, it follows from the Rankine-Hugoniot conditions \eqref{RH} and
the entropy condition \eqref{shock-entropy-RHp} that the location of the incident shock satisfies $0<\xi_1^0<c_1$,
where $c_1=\rho_1^{\frac{\gamma-1}{2}}$ is the sonic speed of state $(1)$.
Thus, as shown in Fig. \ref{figure:shock diffraction}, the incident shock $\Shock^1$ interacts with the sonic circle $\Sonic$
of state $(1)$ and becomes a transonic shock $\Shock$, and then $\Shock$ meets wedge $\Wedge^0$ perpendicularly.

Let $\Lambda_0$, $\Lambda_1$, and $\Omega$ be defined, as shown in Fig. \ref{figure:shock diffraction}.
Let $\Lambda=\overline{\Lambda_0\cup\Lambda_1\cup\Omega}$.
An entropy solution $(\rho,\vv)$ of the boundary value problem in $\Lambda$ in the sense of Definition \ref{weakSolEulerSlipBC}
is called a regular shock diffraction solution of the Lighthill problem for the isentropic Euler system
if $(\rho,\vv)$ satisfies the following further properties:
\begin{enumerate}[(i)]
\item The diffracted shock curve $\Shock=P_2P_3$ is $C^1$ up to its endpoints and is $C^2$ in its relative interior.
Its tangent is perpendicular to $\Wedge^0$ at $P_2$ and is vertical at $P_3$.

\item  $(\rho, \vv)$ is continuous in $\overline\Omega\cap {\mathcal N}_r(\Shock\cup\Sonic)$
for some $r>0$.

\item The solution coincides with states $(0)$ and $(1)$ in their respective regions.
Specifically,
\begin{equation*}
(\rho, \vv)=
\begin{cases} (\rho_0, \vv_0) \qquad\mbox{in $\Lambda_0$},\\[0.5mm]
              (\rho_1, \vv_1) \qquad \mbox{in $\Lambda_1$},
\end{cases}
\end{equation*}
where $\vv_0=-\xxi$ and $\vv_1=(u_1^{(1)},0)-\xxi$.

\item $\vv\cdot\nu\leq -C^{-1}$ on $\overline{\Shock}\cup\overline{\Sonic}$ for some $C>0$,
where $\vv$ on the curves is taken from the $\Omega$--side and $\nnu$ is the outer normal with respect to $\Omega$.

\item  The flow is pseudo-subsonic in $\Omega$ on and near $\Shock$ except for the sonic points $P_3$.
\end{enumerate}

Let $M=2$, $\Nex=2$, and $\Nint=2$ with $\Lambda_1=\Lambda_0$, $\Lambda_2=\Lambda_1$, $\GammaExt_1=\Wedge^1=P_1P_4$, $\GammaExt_2=\Wedge^0=P_1P_2$,
$\GammaInt_1=\Shock=P_2P_3$, and $\GammaInt_2=\Sonic=P_3P_4$.
Then, following the proof of Theorem \ref{lower-reg-Th-RegRefl}, we have the following theorem on the lower regularity of
the regular shock diffraction solutions.

\begin{theorem}
Let $(\rho,\vv)$ be a regular shock diffraction solution of the Lighthill problem
for the isentropic Euler system and satisfy the following{\rm :}
\begin{enumerate}[\rm (i)]
\item
$\rho, \vv\in C^1\left((\Nbhd_\sigma(\Shock\cup\Sonic)\cap\overline\Omega)\setminus\{P_3\}\right)\cap C^{0,1}\left(\Nbhd_\sigma(\Shock\cup\Sonic)\cap\overline\Omega\right)$ for some $\sigma>0${\rm ;}
\item
$|\vv|\le C_0 $ and $C_0^{-1}\le \rho\le C_0$ in $\Omega$ for some $C_0\ge 1$.
\end{enumerate}
Then $\vv\notin H^1(\Omega)$.
\end{theorem}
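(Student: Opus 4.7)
The plan is to reduce the statement to Theorem \ref{lower-reg-Th} by verifying all conditions of Definition \ref{RiemannProblSolutStruct-1} and assumptions (i)--(iii) of Theorem \ref{lower-reg-Th} for the regular shock diffraction solution, following the template of the proofs of Theorems \ref{lower-reg-Th-RegRefl}--\ref{thm-4.11}. With the indexing already fixed in the text ($M=2$, $\Nex=2$, $\Nint=2$, $\Lambda_1=\Lambda_0$, $\Lambda_2=\Lambda_1$, $\GammaExt_1=\Wedge^1$, $\GammaExt_2=\Wedge^0$, $\GammaInt_1=\Shock$, $\GammaInt_2=\Sonic$), properties (i)--(v) of Definition \ref{RiemannProblSolutStruct-1} follow directly from conditions (i)--(iii) of the definition of regular shock diffraction solution and the geometric setup; in particular, $\PZer=P_1=(0,0)$ is the common endpoint of $\Wedge^0$ and $\Wedge^1$, and $\PZer\notin\overline{\GammaInt}$. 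Condition (vi-0) of Definition \ref{RiemannProblSolutStruct-1} is precisely admissibility (ii), while the normal-velocity inequalities in condition (vi) of Definition \ref{RiemannProblSolutStruct-1} are immediate from admissibility (iv), which furnishes the bound $\vv\cdot\nnu\le -C^{-1}$ on both $\Shock$ and $\Sonic$.

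The only nontrivial verification is condition (vii) of Definition \ref{RiemannProblSolutStruct-1}, which I would handle via Lemma \ref{PtWithNonzeroTangVelocCurv-lemma} applied with $\GammaInt_1=\Shock$ and $\Lambda_j=\Lambda_1$. Hypothesis (a) of the lemma is immediate from the reindexing above. For (b), I claim $\Shock$ is not a straight segment: by admissibility (i) the tangent to $\Shock$ at $P_2$ is perpendicular to $\Wedge^0$ while the tangent at $P_3$ is vertical, and in the step-down Lighthill geometry $\Wedge^0$ is non-horizontal, so these tangent directions differ. For (c), take the endpoint $P_3$: since $P_3$ lies both on the vertical incident-shock line $\{\xi_1=\xi_1^0\}$ and on the sonic circle $\{(\xi_1-u_1^{(1)})^2+\xi_2^2=c_1^2\}$ of state $(1)$, one has $P_3=(\xi_1^0,\xi_2^{(3)})$ with $\xi_2^{(3)}>0$; Rankine-Hugoniot tangential continuity across $\Shock$ yields $(\vv\cdot\bt)(P_3)=(\vv_1\cdot\bt)(P_3)$, and with $\bt(P_3)=(0,1)$ and $\vv_1(P_3)=(u_1^{(1)}-\xi_1^0,\,-\xi_2^{(3)})$ we obtain $(\vv_1\cdot\bt)(P_3)=-\xi_2^{(3)}\ne 0$.

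Assumptions (i)--(iii) of Theorem \ref{lower-reg-Th} then coincide with the hypotheses of the present theorem (noting $P_3\in\partial_{\rm p}\GammaInt$), so Theorem \ref{lower-reg-Th} applies and yields $\vv\notin H^1(\Omega)$. I expect the only delicate step to be the verification that $\Shock$ is not a straight segment; this is short once the step-down orientation of $\Wedge^0$ is exploited alongside the perpendicularity/verticality of the shock tangents at the two endpoints, but it is the one place where the specific Lighthill geometry (rather than a generic Rankine-Hugoniot computation as in the earlier applications) enters the argument.
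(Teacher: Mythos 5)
Your proposal is correct and follows essentially the same route as the paper, which reduces the statement to Theorem \ref{lower-reg-Th} by verifying Definition \ref{RiemannProblSolutStruct-1} (with condition (vii) obtained through Lemma \ref{PtWithNonzeroTangVelocCurv-lemma} and the non-straightness of $\Shock$ from the tangent conditions at $P_2$ and $P_3$), the paper itself omitting the details as analogous to Theorem \ref{thm-4.11}. One minor labeling point: the constant state across $\Shock$ near $P_3$ is state $(0)$ (pseudo-velocity $-\xxi$), not state $(1)$, but your conclusion $(\vv\cdot\bt)(P_3)=-\xi_2^{(3)}\neq 0$ is unaffected, since the tangent at $P_3$ is vertical and both states have zero vertical velocity component (equivalently, one may use continuity of $(\rho,\vv)$ up to the sonic point $P_3$ from the $\Omega$--side).
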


Since the proof argument is similar to the one for Theorem \ref{thm-4.11},
we omit the details.

\subsection{Lower Regularity of the Riemann Solutions with Four-Shock Interactions}
The final problem is the Riemann problem with four-shock interaction structure for the isentropic Euler system.
As discussed in \cite{CCHLW2023} for the Riemann problem with four-shock interactions for the potential flow,
initially in the $\xx$--coordinates, the scale-invariant domains $\Lambda_i\subset\R^2$, for $i=1,2,3,4$, are defined by
\begin{align*}
    &\Lambda_1=\big\{\xx\in\R^2\,:\,-\theta_{14}<\theta<\theta_{12}\big\},
      \qquad\qquad\quad\Lambda_2=\big\{\xx\in\R^2\,:\,\theta_{12}<\theta<\pi-\theta_{32}\big\},\\
    &\Lambda_3=\big\{\xx\in\R^2\,:\,\pi-\theta_{32}<\theta<\pi+\theta_{34}\big\},
      \qquad\, \Lambda_4=\big\{\xx\in\R^2\,:\,\pi+\theta_{34}<\theta<2\pi-\theta_{14}\big\},
\end{align*}
where $\theta$ is the polar angle of point $\xx\in\R^2$ and the four parameters $\theta_{12},\theta_{32},\theta_{34},\theta_{14}\in(0,\frac{\pi}{2})$.
On each $\Lambda_i$, suitable constant states $(i)$ with values $(\rho_i,\vv_i)$ are given for the Riemann initial data such that
any two neighboring states are connected by exactly one planar shock discontinuity, which satisfies
the Rankine-Hugoniot conditions and the entropy condition for the isentropic Euler system.
As introduced in \cite{CCHLW2023}, under the symmetry assumption that $\theta_{12}=\theta_{14}=\theta_1$, $\theta_{32}=\theta_{34}=\theta_2$,
and states in $\Lambda_2$ and $\Lambda_4$ are the same constants,
and under a structure assumption that one forward shock is generated between states $(1)$ and $(j)$ and one backward shock is generated between
states $(3)$ and $(j)$, governed by the Rankine-Hugoniot conditions \eqref{RH} and the entropy condition \eqref{shock-entropy-RHp},
we expect the Riemann problem develops a Riemann solution with the structure as shown in Fig. \ref{figure:four shocks}.

\begin{figure}[!h]
  \centering
\centering
     \includegraphics[width=0.7\textwidth]{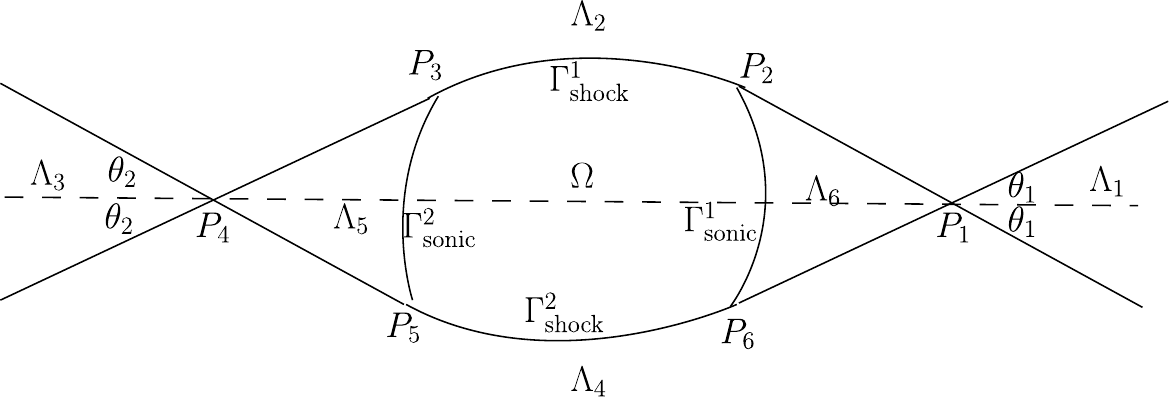}
  \caption{Riemann problem with four-shock interactions}
  \label{figure:four shocks}
\end{figure}

On the symmetry line that is the dashed line shown in Fig. \ref{figure:four shocks},
the velocity satisfies \eqref{Euler-slipBC-def} with $\bn$ being the vertical direction.
At the interaction points $P_1$ and $P_4$, based on an argument similar to the one
for the regular shock reflection problem
at the reflection point $P_0$, with the symmetry line and state $(2)$ in Fig. \ref{figure:four shocks}
corresponding to $\Wedge$ and state $(1)$ in Fig. \ref{figure:shock refelction},
there exists a detachment angle $\theta^{\rm d}\in(0,\frac{\pi}{2})$ depending on the data such that,
for $\theta_1,\theta_2\in(0,\theta_{\rm w}^{\rm d})$,
there exist two constant states $(5)$ and $(6)$ in $\Lambda_5$ and $\Lambda_6$, respectively,
satisfying the Rankine-Hugoniot conditions \eqref{RH} and the entropy condition \eqref{shock-entropy-RHp}.
One corresponds to the strong shock and the other to the weak shock.
We always select the weak one because it expects to be stable in general.
Then there exists a sonic angle $\theta^{\rm s}\in(0,\theta^{\rm d}]$ such that the state at $P_1$ (or $P_4$)
is pseudo-supersonic if $\theta_1\in(0,\theta^{\rm s})$ (or $\theta_2\in(0,\theta^{\rm s})$)
and is pseudo-subsonic if $\theta_1\in(\theta^{\rm s},\theta^{\rm d}]$ (or $\theta_2\in(\theta^{\rm s},\theta^{\rm d}]$).

When $\theta_1\in(0,\theta^{\rm s})$ at $P_1$,
there exist points $P_2$ and $P_6$ on $\Sonic^1$ (the sonic curve of state $(6)$) such that shocks $P_1P_2$ and $P_1P_6$
are straight shocks and the state in $\Lambda_6$ with boundaries $P_1P_2$, $\Sonic^1$, and $P_1P_6$
is the constant state $(6)$.
Similarly, when $\theta_2\in(0,\theta^{\rm s})$ at $P_4$,
there exist points $P_3$ and $P_5$ on $\Sonic^2$ (the sonic curve of state $(5)$) such that shocks $P_3P_4$ and $P_4P_5$
are straight shocks and the state in $\Lambda_5$ with boundaries $P_3P_4$, $\Sonic^2$, and $P_4P_5$ is the constant state $(5)$.
On the other hand, when $\theta_1\in[\theta^{\rm s},\theta^{\rm d}]$ at $P_1$ (or when $\theta_2\in[\theta^{\rm s},\theta^{\rm d}]$ at $P_4$),
the sonic arc, the straight shocks, and $\Lambda_6$ (or $\Lambda_5$) disappear near point $P_1$ (or $P_4$),
and state $(6)$ (or state $(5)$)
is the limit of solution $(\rho,\vv)$ in $\Omega$ as the point tends to $P_1$ (or $P_4$). It is direct to see that
\begin{equation}\label{tangents4.5}
    \mbox{\it The tangents to $\Shock^1$ at $P_2$ and $P_3$ are not parallel to each other},
\end{equation}
where $P_2$ is replaced by $P_1$ if state $(6)$ is subsonic and $P_3$ is replaced by $P_4$ if state $(5)$ is subsonic.

Let $\Lambda_i$ for $i=1,\cdots,6$, if they exist, and let $\Omega$ be defined as shown in Fig. \ref{figure:four shocks}.
Let $\Lambda=\R^2$.
An entropy solution $(\rho,\vv)$ of the boundary value problem in $\Lambda$ in the sense of Definition \ref{weakSolEulerSlipBC}
is called a regular Riemann solution with
four-shock interactions for the isentropic Euler system if  $(\rho,\vv)$ satisfies the following further properties:
\begin{enumerate}[\rm (i)]
\item The shock curves $\Shock^1=P_2P_3$ and $\Shock^2=P_5P_6$ are $C^1$ up to their endpoints and are $C^2$ in their relative interiors.
Their tangents satisfy \eqref{tangents4.5}.

\item  $(\rho, \vv)$ is continuous in $\overline\Omega\cap {\mathcal N}_r(\Shock^1\cup\Sonic^1\cup\Shock^2\cup\Sonic^2)$
for some $r>0$.

\item The solution coincides with the corresponding constant states in their respective regions $\Lambda_i$, for $i=1,\cdots,6$, if they exist:
Specifically,
\begin{equation*}
(\rho, \vv)=
\begin{cases} (\rho_1, \vv_1) \qquad\mbox{in}\,\,\,\Lambda_1,\\[0.5mm]
             (\rho_2, \vv_2) \qquad\mbox{in}\,\,\,\Lambda_2,\\[0.5mm]
             (\rho_3, \vv_3) \qquad\mbox{in}\,\,\,\Lambda_3,\\[0.5mm]
            (\rho_2, \vv_2) \qquad\mbox{in}\,\,\,\Lambda_4,\\[0.5mm]
              (\rho_5, \vv_5) \qquad \mbox{in}\,\,\,\Lambda_5,\quad \mbox{if state (5) at $P_4$ is supersonic}\\[0.5mm]
         (\rho_6, \vv_6) \qquad \mbox{in}\,\,\,\Lambda_6,\quad \mbox{if state (6) at $P_1$ is supersonic},
\end{cases}
\end{equation*}
where
$$
\displaystyle \lim_{\xxi\in\Omega,\;\xxi\to P_4}(\rho, \vv)(\xxi)=(\rho_5, \vv_5)(P_4)\qquad\mbox{if state (5) at $P_4$ is subsonic or sonic},
$$
$$
\displaystyle \lim_{\xxi\in\Omega,\;\xxi\to P_1}(\rho, \vv)(\xxi)=(\rho_6, \vv_6)(P_1)\qquad\mbox{if state (6) at $P_1$ is subsonic or sonic}.
$$

\item $\vv\cdot\nu\leq -C^{-1}$ on $\overline{\Shock^1}\cup\overline{\Sonic^1}\cup\overline{\Shock^2}\cup\overline{\Sonic^2}$
for some $C>0$, where $\vv$ on the curves is taken from the $\Omega$--side and $\nnu$ is the outer normal with respect to $\Omega$.

\item  The flow is pseudo-subsonic in $\Omega$ on and near $\Shock^1\cup\Shock^2$, except for possible sonic points $P_2$, $P_3$, $P_5$, and $P_6$.
\end{enumerate}

If state $(5)$ at $P_4$ and state $(6)$ at $P_1$ are both supersonic, then $M=6$, $\Nex=0$, and $\Nint=4$ with $\Lambda_i$ (for $i=1,\cdots,6$)
being given as in Fig. \ref{figure:four shocks}, $\GammaInt_1=\Shock^1=P_2P_3$, $\GammaInt_2=\Shock^2=P_5P_6$, $\GammaInt_3=\Sonic^1=P_2P_6$,
and $\GammaInt_4=\Sonic^2=P_3P_5$.

If state $(5)$ at $P_4$ and state $(6)$ at $P_1$ are both subsonic, then $M=4$, $\Nex=0$, and $\Nint=2$ with $\Lambda_i$ (for $i=1,2,3,4$)
being given as in Fig. \ref{figure:four shocks}, $\Lambda_5$ and $\Lambda_6$ disappearing, $\GammaInt_1=\Shock^1=P_1P_4$,
and $\GammaInt_2=\Shock^2=P_4P_1$.

If state $(5)$ at $P_4$ is supersonic and state $(6)$ at $P_1$ is subsonic,
then $M=5$, $\Nex=0$, and $\Nint=2$ with $\Lambda_i$ (for $i=1,\cdots,5$) being given as in Fig. \ref{figure:four shocks},
$\Lambda_6$ disappearing, $\GammaInt_1=\Shock^1=P_1P_3$, and $\GammaInt_2=\Shock^2=P_5P_1$, and $\GammaInt_3=\Sonic^2=P_3P_5$.

It state $(5)$ at $P_4$ is subsonic and state $(6)$ at $P_1$ is supersonic, then $M=5$, $\Nex=0$ and $\Nint=3$ with $\Lambda_i$ (for $i=1,\cdots,6$)
being given
as in Fig. \ref{figure:four shocks}, $\Lambda_5$ disappearing, $\GammaInt_1=\Shock^1=P_2P_4$, and $\GammaInt_2=\Shock^2=P_4P_6$, and $\GammaInt_3=\Sonic^1=P_2P_6$.

Then, following the proof of Theorem \ref{lower-reg-Th-RegRefl},
we have the following theorem on the lower regularity of the regular Riemann solution,
whose proof is omitted since it is similar to the one for Theorem \ref{lower-reg-Th-RegRefl}:

\begin{theorem}
Let $(\rho,\vv)$ be a regular Riemann solution with four-shock interactions for the isentropic Euler system and further satisfy{\rm :}
\begin{enumerate}[\rm (i)]
\item
For some $\sigma>0$,
\begin{align*}
(\rho, \vv)\in 
& \, C^1\big((\Nbhd_\sigma(\Shock^1\cup\Shock^2\cup\Sonic^1\cup\Sonic^2)\cap\overline\Omega)\setminus\{P_2,P_3,P_5,P_6\}\big)\\
& \, \cap C^{0,1}\big(\Nbhd_\sigma(\Shock^1\cup\Shock^2\cup\Sonic^1\cup\Sonic^2)\cap\overline\Omega\big),
\end{align*}
where $\Sonic^i=\emptyset$ when it does not exist for $i=1,2$;

\smallskip
\item
$|\vv|\le C_0$ and $C_0^{-1}\le \rho\le C_0$ in $\Omega$ for some $C_0\ge 1$.
\end{enumerate}
\smallskip
Then $\vv\notin H^1(\Omega)$.
\end{theorem}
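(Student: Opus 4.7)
The plan is to reduce the theorem to Theorem \ref{lower-reg-Th} by verifying, for each of the four configurations (supersonic/subsonic at $P_1$ and at $P_4$), that $(\rho,\vv)$ satisfies conditions \eqref{RiemannProblSolutStruct-1-i1}--\eqref{RiemannProblSolutStruct-1-i7} of Definition \ref{RiemannProblSolutStruct-1} together with hypotheses \eqref{lower-reg-Th-i2}--\eqref{lower-reg-Th-i4} of Theorem \ref{lower-reg-Th}. I would treat the generic supersonic/supersonic case in detail and indicate the small modifications for the other three cases, which only amount to removing $\Sonic^i$ or merging endpoints.

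First I would read off the combinatorial data already given before the theorem: $\Lambda=\bR^2$ (so $\GammaExt=\emptyset$ and $\Nex=0$), the relevant values of $M$ and $\Nint$, and the assignments of the curves $\GammaInt_j$. Properties \eqref{RiemannProblSolutStruct-1-i1}--\eqref{RiemannProblSolutStruct-1-i2} hold because states $(1),\ldots,(6)$ are pairwise distinct (they are joined by genuine Rankine-Hugoniot shocks). Property \eqref{RiemannProblSolutStruct-1-i3} follows from the $C^1$-regularity of $\Shock^1,\Shock^2$ up to their endpoints and the fact that the interior angle at each corner is in $(0,2\pi)$; \eqref{RiemannProblSolutStruct-1-i4} is vacuous since $\Nex=0$; \eqref{RiemannProblSolutStruct-1-i5} follows from the $C^2$-interior/$C^1$-boundary regularity of $\Shock^1,\Shock^2,\Sonic^1,\Sonic^2$ (with $\PZer$ automatically outside $\overline{\GammaInt}$ since $\Lambda=\bR^2$). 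Condition \eqref{RiemannProblSolutStruct-1-i6-0} is exactly assumption (i) of the theorem, and \eqref{RiemannProblSolutStruct-1-i6} follows from item (iv) of the definition of a regular Riemann solution together with the explicit identities $\vv_i\cdot\nnu=-c_i$ on $\Sonic^i$ for $i=1,2$.

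The crucial remaining point is condition \eqref{RiemannProblSolutStruct-1-i7}, i.e., the existence of a point on $\GammaInt_1=\Shock^1$ where both the curvature and the tangential velocity are non-zero. I would obtain this by invoking Lemma \ref{PtWithNonzeroTangVelocCurv-lemma} with $\GammaInt_1=\Shock^1=\partial\Omega\cap\partial\Lambda_2$. Its hypothesis \eqref{PtWithNonzeroTangVelocCurv-lemma-i1} is then immediate. For \eqref{PtWithNonzeroTangVelocCurv-lemma-i2} I would argue, as in the proof of Theorem \ref{lower-reg-Th-RegRefl}, that the tangent to $\Shock^1$ at $P_2$ agrees with the straight shock line $S_1$ joining states $(1)$ and $(6)$, while the tangent at $P_3$ agrees with the straight shock line $S_2$ joining states $(1)$ and $(5)$ (with $P_2$ or $P_3$ replaced by $P_1$ or $P_4$ in the subsonic sub-cases, using the subsonic-limit clause of item (iii)); property \eqref{tangents4.5} then rules out $\Shock^1$ being straight. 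For \eqref{PtWithNonzeroTangVelocCurv-lemma-i3}, following the regular-reflection proof verbatim: if $\vv\cdot\ttau(P_1)=0$ (supersonic case at $P_1$ replaced by the obvious analog), then using $\vv_{|\Omega}(P_2)=\vv_6(P_2)$ and $\ttau_{\rm shock}(P_2)=\ttau_{S_1}$ yields $|\vv(P_2)\cdot\nnu|=|\vv_6(P_2)|=c_6$, contradicting the strict post-shock subsonicity $\vv^+\cdot\nnu<c^+$ in \eqref{shock-entropy-RHp}; the subsonic sub-case is handled by the same degeneration argument (if $\vv_6\cdot\ttau_{S_1}=0$ at $P_1$, one concludes successively $O_6=P_1$, $\vv_6(P_1)=\veco$, $\vv_1(P_1)=\veco$, $O_1=P_1$, contradicting $O_1\ne O_6$).

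The same argument applied with $\GammaInt_1$ replaced by $\Shock^2$ (by the symmetry in the definition, either shock can play the role of ``shock number one''; one simply re-indexes) handles the alternative choice, but it is not required: a single point $\hat P$ on $\Shock^1$ suffices. Finally, assumptions \eqref{lower-reg-Th-i2}--\eqref{lower-reg-Th-i3} of Theorem \ref{lower-reg-Th} are exactly (i)--(ii) of the present theorem, and \eqref{lower-reg-Th-i4} follows from item (v) of the definition of a regular Riemann solution. Having matched every hypothesis, Theorem \ref{lower-reg-Th} yields $\vv\notin H^1(\Omega)$. The main technical obstacle, as in the earlier applications, is the verification of the non-degeneracy \eqref{PtWithNonzeroTangVelocCurv-lemma-i3} in the fully subsonic sub-case, where the tangent-to-shock condition degenerates into a geometric statement about the sonic centers $O_1,O_5,O_6$, and one must carefully exploit the explicit location of these centers together with the Rankine-Hugoniot relations to obtain a contradiction.
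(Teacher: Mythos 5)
Your overall strategy is exactly what the paper intends: the paper omits the proof of this theorem, saying only that one repeats the proof of Theorem \ref{lower-reg-Th-RegRefl}, i.e., verifies conditions \eqref{RiemannProblSolutStruct-1-i1}--\eqref{RiemannProblSolutStruct-1-i6} of Definition \ref{RiemannProblSolutStruct-1} and hypotheses \eqref{lower-reg-Th-i2}--\eqref{lower-reg-Th-i4} of Theorem \ref{lower-reg-Th} with the combinatorial data ($M$, $\Nex=0$, $\Nint$, the assignment of the $\GammaInt_i$) already listed before the statement, and obtains condition \eqref{RiemannProblSolutStruct-1-i7} from Lemma \ref{PtWithNonzeroTangVelocCurv-lemma}. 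Your verification of (a), (b) and of the supersonic/sonic case of (c) is in substance the paper's argument (for (b) you could simply quote \eqref{tangents4.5}, which is built into property (i) of the definition of a regular Riemann solution). One inessential inaccuracy: states $(2)$ and $(4)$ are the \emph{same} constant state by the symmetry assumption, so ``states $(1),\ldots,(6)$ pairwise distinct'' is not literally true; this is harmless because $\overline{\Lambda_2}$ and $\overline{\Lambda_4}$ meet only at the interaction points and condition \eqref{RiemannProblSolutStruct-1-i2} plays no role in the proof of Theorem \ref{lower-reg-Th}.

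The genuine defect is a mislabelling of the states adjacent to the straight shocks, which breaks your verification of Lemma \ref{PtWithNonzeroTangVelocCurv-lemma}\eqref{PtWithNonzeroTangVelocCurv-lemma-i3} in the subsonic sub-case as written. The straight shock issuing from $P_1$ and ending at $P_2$ separates state $(2)$ from state $(6)$ (and the one ending at $P_3$ separates state $(2)=(4)$ from state $(5)$); state $(1)$ is nowhere adjacent to $\Omega$, $(5)$ or $(6)$ across a shock. In the supersonic case this does not matter, since there you only use $\vv|_\Omega(P_2)=\vv_6(P_2)$, $|\vv_6(P_2)|=c_6$ on $\Sonic^1$, and the downstream subsonicity in \eqref{shock-entropy-RHp}. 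But in the fully subsonic case your chain ``$O_6=P_1\Rightarrow\vv_6(P_1)=\veco\Rightarrow\vv_1(P_1)=\veco\Rightarrow O_1=P_1$, contradicting $O_1\ne O_6$'' does not go through: the Rankine--Hugoniot conditions at $P_1$ couple $(6)$ with $(2)$, not with $(1)$, and $O_1\ne O_6$ is not among the established facts. The correct adaptation of Step 3 of the proof of Theorem \ref{lower-reg-Th-RegRefl} replaces the wedge by the symmetry line: from $(\vv\cdot\bt_{\rm shock})(P_1)=0$ and tangency of $\Shock^1$ to the line shock $S_1$ between $(2)$ and $(6)$ one gets $(\vv_6\cdot\bt_{S_1})(P_1)=0$, hence $(\vv_2\cdot\bt_{S_1})(P_1)=0$ by \eqref{shock-RHp}; the line through $O_2=\uu_2$ and $O_6=\uu_6$ perpendicular to $S_1$ then passes through $P_1$, and since $O_6$ and $P_1$ both lie on the symmetry line one concludes $O_6=P_1$, so $\vv_6(P_1)=\veco$ and, via \eqref{RH}, $\vv_2(P_1)=\veco$. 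The contradiction is then obtained from the shock between $(2)$ and $(6)$ itself (vanishing normal components force $p_2=p_6$, i.e.\ $\rho_2=\rho_6$, contradicting $\rho_6>\rho_2$ from \eqref{shock-entropy-RHp}) or from the explicit location of $O_2=\uu_2$ relative to the symmetry line. With the states $(2)$, $(5)$, $(6)$ and centers $O_2$, $O_5$, $O_6$ (not $O_1$) put in the right places, your reduction is complete and coincides with the argument the paper leaves to the reader.
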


\smallskip
\appendix
\section{DiPerna-Lions-Type Commutator Estimates}
\label{append1}
In this appendix, we show the DiPerna-Lions-type commutator estimates, which have been used in
the proof of Theorem \ref{lower-reg-Th} in \S3.

\begin{lemma}\label{commutatorLemma}
Let $\Omega\subset\bR^n$ be open and bounded, and $n\ge 2$.
For $\eps>0$, define $\displaystyle\eta_\eps(\x)=\frac 1{\eps^n}\eta(\frac{\x}{\eps})$
for a mollifier kernel $\eta\in C^\infty_{\rm c}(\bR^n)$ satisfying $\eta(\x)\ge 0$ and $\int_{\bR^n}\eta(\x)\,\dd \x=1$.
Let $b\in W^{1,p}_{\rm loc}(\Omega)$ and $u\in  L^q_{\rm loc}(\Omega)$
for $p, q\in [1, \infty]$ such that $\displaystyle\frac 1p+\frac 1q\le 1$.
Then, for any $i=1, \cdots, n$,
\begin{equation}\label{commutatorExpr}
 \partial_{x_i}\big((bu)_\eps -b_\eps u_\eps\big)\to 0
 \qquad\mbox{in $L^1_{\rm loc}(\Omega)$ as $\eps\to 0^+$},
\end{equation}
where we have used the notation $F_\eps:=F*\eta_\eps$.
\end{lemma}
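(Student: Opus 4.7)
The plan is to adapt the classical DiPerna--Lions commutator argument. Setting $E_\varepsilon:=(bu)_\varepsilon-b_\varepsilon u_\varepsilon$, the starting point is the pointwise identity
\begin{equation*}
\partial_{x_i}E_\varepsilon(\x)=\int_{\bR^n}\bigl(b(\y)-b_\varepsilon(\x)\bigr)\bigl(u(\y)-u_\varepsilon(\x)\bigr)\,\partial_{x_i}\eta_\varepsilon(\x-\y)\,\dd\y,
\end{equation*}
which I would verify by expanding the right-hand side, using $\partial_{x_i}(bu)_\varepsilon(\x)=\int b(\y)u(\y)\,\partial_{x_i}\eta_\varepsilon(\x-\y)\,\dd\y$ and the analogous formulas for $\partial_{x_i}b_\varepsilon$ and $\partial_{x_i}u_\varepsilon$, combined with the vanishing integral $\int\partial_{x_i}\eta_\varepsilon=0$ that kills the leftover $b_\varepsilon(\x)u_\varepsilon(\x)$ term. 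The change of variables $\y=\x-\varepsilon\zz$ then rewrites this as
\begin{equation*}
\partial_{x_i}E_\varepsilon(\x)=\frac{1}{\varepsilon}\int_{\bR^n}\bigl(b(\x-\varepsilon\zz)-b_\varepsilon(\x)\bigr)\bigl(u(\x-\varepsilon\zz)-u_\varepsilon(\x)\bigr)(\partial_i\eta)(\zz)\,\dd\zz,
\end{equation*}
exposing an apparent singular prefactor $\varepsilon^{-1}$ that must be absorbed.

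The next step is to establish a uniform bound $\|\partial_{x_i}E_\varepsilon\|_{L^1(K)}\le C_K\|\nabla b\|_{L^p(K')}\|u\|_{L^q(K')}$ on every compact $K\Subset\Omega$, where $K'$ is a slight enlargement of $K$ and $\varepsilon$ is sufficiently small. Splitting $b(\x-\varepsilon\zz)-b_\varepsilon(\x)=[b(\x-\varepsilon\zz)-b(\x)]+[b(\x)-b_\varepsilon(\x)]$, the standard difference-quotient and mollification estimates for $W^{1,p}$ functions bound each term in $L^p(K)$ by $C\varepsilon(1+|\zz|)\|\nabla b\|_{L^p(K')}$, precisely canceling the $\varepsilon^{-1}$ prefactor. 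The $L^q$ factor is controlled trivially by $\|u(\cdot-\varepsilon\zz)-u_\varepsilon\|_{L^q(K)}\le 2\|u\|_{L^q(K')}$, and H\"older's inequality (applicable by the hypothesis $1/p+1/q\le 1$) together with integration against $|(\partial_i\eta)(\zz)|\in L^1(\bR^n)$ then yields the claimed bound.

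Finally I would pass to the limit $\varepsilon\to 0$ by approximation. For smooth $b,u$, Taylor expansion shows that each of $b(\x-\varepsilon\zz)-b_\varepsilon(\x)$ and $u(\x-\varepsilon\zz)-u_\varepsilon(\x)$ is of order $\varepsilon$ with an explicit leading term linear in $\nabla b$ and $\nabla u$ respectively, so the integrand in the scaled identity is $O(\varepsilon^2)$ pointwise and dividing by $\varepsilon$ forces $\partial_{x_i}E_\varepsilon\to 0$ uniformly on compact sets. For general $b\in W^{1,p}_{\rm loc}$ and $u\in L^q_{\rm loc}$, I would pick smooth $b^\delta\to b$ in $W^{1,p}(K')$ and $u^\delta\to u$ in $L^q(K')$, and use the bilinearity of the commutator to decompose
\begin{equation*}
\partial_{x_i}E_\varepsilon(b,u)=\partial_{x_i}E_\varepsilon(b^\delta,u^\delta)+\partial_{x_i}E_\varepsilon(b-b^\delta,u)+\partial_{x_i}E_\varepsilon(b^\delta,u-u^\delta);
\end{equation*}
fixing $\delta$ small so that the uniform bound makes the last two terms negligible in $L^1(K)$ uniformly in $\varepsilon$, and then sending $\varepsilon\to 0$ in the smooth first term, yields \eqref{commutatorExpr}. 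The main technical obstacle is the uniform bound: the $\varepsilon^{-1}$ inherited from $\partial_i\eta_\varepsilon$ must be canceled by an $O(\varepsilon)$ quantity in $L^p$, and it is precisely the $W^{1,p}$ regularity of $b$ (converted via difference-quotient estimates) that supplies this cancellation, while the $L^q$ regularity of $u$ serves only to close the H\"older step.
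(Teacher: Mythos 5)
Your commutator identity, the rescaling $\y=\x-\eps\zz$, and the resulting uniform bound $\|\partial_{x_i}((bu)_\eps-b_\eps u_\eps)\|_{L^1(K)}\le C\,\|\grad b\|_{L^p(K')}\|u\|_{L^q(K')}$ are correct, and this part is essentially the paper's Step 1 in a cleaner packaging: the paper instead splits the commutator into four terms (namely $\partial_{x_i}(bu)_\eps-b\,\partial_{x_i}u_\eps$, $-b_{x_i}u_\eps$, $u_\eps\partial_{x_i}(b_\eps-b)$, and $(b_\eps-b)\partial_{x_i}u_\eps$) and estimates each with the same difference-quotient and mollification estimates you invoke; the two computations are interchangeable. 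Your treatment of the smooth case is also fine (the paper instead observes that the conclusion is immediate from the product rule whenever $u\in W^{1,q}_{\rm loc}$, which plays the same role).

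The gap is in your final density step. You ``pick smooth $b^\delta\to b$ in $W^{1,p}(K')$ and $u^\delta\to u$ in $L^q(K')$,'' but the lemma allows $p,q\in[1,\infty]$, and smooth functions are not dense in $L^\infty$ or $W^{1,\infty}$: if $q=\infty$ there is in general no $u^\delta$ with $\|u-u^\delta\|_{L^\infty(K')}\to 0$, and if $p=\infty$ no $b^\delta$ with $\|\grad(b-b^\delta)\|_{L^\infty(K')}\to 0$, so the two error terms in your bilinear decomposition cannot be made small. When $p=\infty$ with $q>1$, or $q=\infty$ with $p>1$, you could rescue the argument by lowering the infinite exponent on the bounded set $K'$ (e.g.\ $L^\infty(K')\subset L^{p'}(K')$), but at the genuine endpoints $(p,q)=(1,\infty)$ and $(\infty,1)$ no such reduction exists and your scheme as written breaks down. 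The paper avoids this by approximating $u$ only in $L^1_{\rm loc}$ (always possible), estimating the cross commutator formed by $b_k$ and $u-u_k$ with the uniform bound at exponents $(\infty,1)$, i.e.\ by $\|\grad b_k\|_{L^\infty(\Omega'')}\,\|u-u_k\|_{L^1(\Omega'')}$, and passing to a subsequence of $\{u_k\}$ so that this product tends to zero even though $\|\grad b_k\|_{L^\infty}$ may blow up; the remaining case $(p,q)=(\infty,1)$ is then handled by interchanging the roles of $b$ and $u$. Adding this bookkeeping (or restricting your density argument to finite exponents and treating the endpoint cases separately as above) closes the gap; for $p,q<\infty$ your argument is complete.
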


\begin{proof}
\newcommand{\subs}{\Omega'}
\newcommand{\subsT}{\Omega''}

Let $\mbox{supp}(\eta)\subset B_M$.
Denote
$$
A_\eps[u,b]:=\partial_{x_i}\big((b u)_\eps -b_\eps u_\eps\big).
$$
We divide the proof into two steps.

\smallskip
1. We first show that, for all open  $\subs\Subset\Omega$
and $\eps\in (0, \,\frac 1{2M}\dist(\subs, \partial\Omega))$,
\begin{equation}\label{estOfCommut-Main}
 \|A_\eps[u,b]\|_{L^1(\subs)}\le C(n, \eta, \subs)
 \|\grad b\|_{L^p(\Omega)}\|u\|_{L^q(\Omega)}.
 \end{equation}

Notice that
\begin{align*}
  A_\eps[u,b] & = \partial_{x_i}\big((b u)_\eps -b u_\eps\big)
  +\partial_{x_i}\big((b_\eps-b) u_\eps\big)\\
   & =\big(\partial_{x_i} (b u)_\eps-b\partial_{x_i}u_\eps \big)
   -b_{x_i} u_\eps
   +u_\eps\partial_{x_i}(b_\eps-b)
   + (b_\eps-b)\partial_{x_i}u_\eps \\
   &=:\sum_{m=1}^4 I_{\eps,m}.
\end{align*}
We now show that, for $\eps\in (0, \,\frac 1{2M}\dist(\subs, \partial\Omega))$,
\begin{equation}\label{est-commutat-I1}
\| I_{\eps,m}\|_{L^1(\subs)}\le C(n, \eta, \subs)\|\grad b\|_{L^p(\Omega)}
\|u\|_{L^q(\Omega)}\qquad \mbox{for $m=1,\cdots,4$}.
\end{equation}

First, we estimate $I_{\eps,1}=\partial_{x_i} (b u)_\eps-b\partial_{x_i}(u)_\eps$.
For $\x\in \subs$,
\begin{align*}
I_{\eps,1}(\x) & =\int_{\bR^n}\partial_{x_i}\big(\eta_\eps(\x-\y)\big)\,\big(b(\y)-b(\x)\big) u(\y)\,{\rm d}\y \\
& =\int_{B_M}\eta_{x_i}(\y)\frac{b(\x-\eps \y)-b(\x)}\eps u(\x-\eps \y)\,{\rm d}\y.
\end{align*}
Then we use the estimate of the difference quotient for a Sobolev function to obtain that,
for any $\eps\in (0, \frac 1{2M}\dist(\subs, \partial\Omega))$,
$$
\Big\| \frac{b(\cdot-\eps \y)-b(\cdot)}\eps\Big\|_{L^p(\subs)}
\le  C(n, \subs)\|\grad b\|_{L^p(\Omega)} |\y|\qquad\, \mbox{for each $\y\in B_M$}.
$$
Noting that $\|u(\cdot-\eps \y)\|_{L^q(\subs)}\le \|u\|_{L^q(\Omega)}$ for $\y$ and $\eps$ as above,
we have
$$
\|I_{\eps,1}\|_{L^1(\subs)}\le  C(n, \subs)\|\grad b\|_{L^p(\Omega)}\|u\|_{L^q(\Omega)}
\int_{B_M}|\eta_{x_i}(\y)|\,{\rm d}\y,
$$
which is \eqref{est-commutat-I1}
for $m=1$.

\smallskip
For $I_{\eps,2}=-b_{x_i} u_\eps$
and $I_{\eps,3}= u_\eps\partial_{x_i}(b_\eps-b)$, \eqref{est-commutat-I1} follows from the standard estimates.

\smallskip
Finally, we estimate $I_{\eps,4}=(b_\eps-b)\partial_{x_i}u_\eps $.
For  $\x\in \subs$,
\begin{align*}
I_{\eps,4}(\x)& =\int_{\bR^n\times\bR^n}\eta_\eps(\x-\y)\partial_{x_i}\big(\eta_\eps(\x-\zz)\big)\,
\big(b(\y)-b(\x)\big) u(\zz)\,{\rm d}\y{\rm d}\zz  \\
&=\int_{B_M\times B_M}\eta(\y)\eta_{x_i}(\zz)\frac{b(\x-\eps \y)-b(\x)}\eps u(\x-\eps \zz)\,{\rm d}\y {\rm d}\zz.
\end{align*}
Then we complete the proof of \eqref{est-commutat-I1} for
$I_{\eps,4}$ similar as for $I_{\eps,1}$ above,
by using the $L^p$--estimate
of the difference quotient for the $b$-term, and the $L^q$--estimate of the shift for the $u$-term.
Then estimate \eqref{estOfCommut-Main} holds.

\smallskip
2. Now we prove the convergence \eqref{commutatorExpr}. We first note that \eqref{commutatorExpr}
holds if $u$ has the higher regularity
$u\in W^{1,q}_{\rm loc}(\Omega)$ and $b$ as assumed.
Indeed, in this case,  $\partial_{x_i}(b u)\in L^1_{\rm loc}(\Omega)$.
Since $\Omega$ is bounded,
\begin{align*}
A_\eps[u,b]&=\big(\partial_{x_i}(b u)\big)*\eta_\eps -(\partial_{x_i}b)_\eps u_\eps-b_\eps(\partial_{x_i} u)_\eps \\
&\to \partial_{x_i}(b u)-\partial_{x_i}b u-b \partial_{x_i}u=0
\qquad\mbox{in $L^1_{\rm loc}(\Omega)\,\,$ as $\eps\to 0$}.
\end{align*}

For general $u$ and $b$ as assumed, the same convergence is obtained by approximation,
and using \eqref{estOfCommut-Main}, via a standard argument that we briefly describe now.
First, let $p\in [1,\infty)$. To include the case that $q=\infty$,
we argue as follows: Choose open $\subs\Subset\Omega$,
then choose open $\subsT$ such that $\subs\Subset\subsT\Subset\Omega$.
Let $(b_k,u_k)\in C^\infty(\Omega)$
be such that $b_k\to b$ in $W^{1,p}_{\rm loc}(\Omega)$ and
$u_k\to u$ in $L^1_{\rm loc}(\Omega)$.
Replacing $\{u_k\}$ by its subsequence
if necessary, we obtain
\begin{equation}\label{subseq-convFast}
\|\grad b_k\|_{L^\infty(\subsT)}\|u-u_k\|_{L^1(\subsT)}\to 0 \qquad\,\,\mbox{as $k\to\infty$.}
\end{equation}
Then, for each $\eps\in (0,\frac 12\dist(\subs, \partial\subsT))$,
using the bi-linearity of $A_\eps[\cdot, \cdot]$ and estimate
\eqref{estOfCommut-Main} on sets $\subs\Subset\subsT$
for $(p, q)$ and $(\infty, 1)$ respectively, we have
\begin{align*}
&\|A_\eps[u,b]\|_{L^1(\subs)} \\
& =\|A_\eps[u_k,b_k]+A_\eps[u,b-b_k]+A_\eps[u-u_k,b_k]\|_{L^1(\subs)} \\
 & \le\|A_\eps[u_k,b_k]\|_{L^1(\subs)}+C\big(
\|\grad b-\grad b_k\|_{L^p(\subsT)}\|u\|_{L^q(\subsT)}
   +\|\grad b_k\|_{L^\infty(\subsT)}\|u-u_k\|_{L^1(\subsT)}
   \big).
 \end{align*}
Recalling that $\|A_\eps[u_k,b_k]\|_{L^1(\subs)}\to 0$ as $\eps\to 0$ for each $k$
and using \eqref{subseq-convFast},
we obtain
that $\|A_\eps[u,b]\|_{L^1(\subs)}\to 0$ as $\eps\to 0$.
In the remaining case  $(p,q)=(\infty, 1)$, we argue similarly, interchanging $p$ and $q$ (resp. $b$ and $u$),
and, instead of \eqref{subseq-convFast}, we replace
$\{b_k\}$ by its subsequence
 if necessary to obtain
 $$
\|\grad b-\grad b_k\|_{L^1(\subsT)}\|u_k\|_{L^\infty(\subsT)}\to 0\qquad \mbox{as $k\to\infty$}.
 $$
Then we modify the rest of the argument correspondingly to conclude the proof.
\end{proof}

\bigskip
\bigskip
\bigskip
\noindent{\bf Acknowledgements}.
The research of Gui-Qiang G. Chen was supported in part by the UK Engineering and Physical Sciences Research
Council Awards EP/L015811/1, EP/V008854, and EP/V051121/1.
The research of Mikhail Feldman was
supported in part by the National Science Foundation under Grants
DMS-2054689 and DMS-2219391, and the Steenbock Professorship Award from the University of Wisconsin-Madison.
The research of Wei Xiang was supported in part by the Research Grants Council of the HKSAR, China
(Project No. CityU 11304820, CityU 11300021, CityU 11311722, and CityU 11305523),
and in part by  the Hong Kong Polytechnic University internal grant (No. P0045335).
For the purpose of open access, the authors have applied a CC BY public copyright license to any Author Accepted Manuscript (AAM) version
arising from this submission.

\bigskip
\medskip
\noindent{\bf Conflict of Interest:} The authors declare that they have no conflict of interest.
The authors also declare that this manuscript has not been previously published,
and will not be submitted elsewhere before your decision.

\bigskip
\noindent{\bf Data availability:} Data sharing is not applicable to this article as no datasets were generated or analyzed during the current study.

\bigskip
\noindent{\bf Funding Declaration:} The research was supported in part by the UK Engineering and Physical Sciences Research
Council Awards EP/L015811/1, EP/V008854, and EP/V051121/1;
the US National Science Foundation under Grants
DMS-2054689 and DMS-2219391, and the Steenbock Professorship Award from the University of Wisconsin-Madison;
and the Research Grants Council of the HKSAR (China) Project Nos. CityU 11304820, CityU 11300021, CityU 11311722, and CityU 11305523,
and  the Hong Kong Polytechnic University internal grant (No. P0045335).

\bigskip
\bibliographystyle{plain}

\end{document}